\let\oldbibliography\thebibliography
\renewcommand{\thebibliography}[1]{%
  \oldbibliography{#1}%
  \setlength{\itemsep}{-1.2mm}%
}
\newcommand{\rmd}{\mathrm{d}}
\renewcommand{\epsilon}{\varepsilon}
\theoremstyle{plain}
\newtheorem{thm}{Theorem}[section]
\newtheorem{theorem}{Theorem}
\newtheorem{cor}[thm]{Corollary}
\newtheorem{lem}[thm]{Lemma}
\theoremstyle{definition}
\newtheorem{defn}[thm]{Definition}
\newtheorem{assms}[thm]{Assumptions}
\newtheorem{remark}[thm]{Remark}
\newtheoremstyle{myremark}
  {3pt}
  {3pt}
  {\small \rmfamily}
  {5pt}
  {\rmfamily}
  {:}
  {.5em}
  {}
\newcommand{\nwc}{\newcommand}
\nwc{\red}[1]{\textcolor{red}{#1}}
\nwc{\blue}[1]{\textcolor{blue}{#1}}
\def\R{\mathbb{R}}
\def\txtc{{\textnormal{c}}}
\def\txtd{{\textnormal{d}}}
\def\txte{{\textnormal{e}}}
\def\ra{\rightarrow}
\def\I{\infty}
\newcommand{\be}{\begin{equation}}
\newcommand{\ee}{\end{equation}}
\newcommand{\benn}{\begin{equation*}}
\newcommand{\eenn}{\end{equation*}}
\newcommand{\bea}{\begin{eqnarray}}
\newcommand{\eea}{\end{eqnarray}}
\newcommand{\beann}{\begin{eqnarray*}}
\newcommand{\eeann}{\end{eqnarray*}}
\newcommand{\myendex}{$\blacklozenge$\end{ex}}
\newcommand{\myendexerc}{$\lozenge$\end{exerc}}
\newcommand{\myendpexerc}{$\lozenge$\end{pexerc}}
\begin{document}
\numberwithin{equation}{section}
\author{Maximilian Engel\thanks{Department of Mathematics, Freie Universit{\"a}t Berlin, Arnimallee 6, 14195 Berlin, Germany.}, Marios-Antonios Gkogkas\thanks{
Faculty of Mathematics, Technical University 
of Munich, 85748 Garching b.~M\"unchen, Germany}, Christian Kuehn\footnotemark[2]}

\title{Homogenization of Coupled Fast-Slow Systems \\ 
via Intermediate Stochastic Regularization}

\maketitle

\begin{abstract}
In this paper we study coupled fast-slow ordinary differential equations (ODEs) with small time scale separation parameter $\epsilon$ such that, for every fixed value of the slow variable, the fast dynamics are sufficiently chaotic with ergodic invariant measure. Convergence of the slow process to the solution of a homogenized stochastic differential equation (SDE) in the limit $\epsilon$ to zero, with explicit formulas for drift and diffusion coefficients, has so far only been obtained for the case that the fast dynamics evolve independently. In this paper we give sufficient conditions for the convergence of the first moments of the slow variable in the coupled case. Our proof is based upon a new method of stochastic regularization and functional-analytical techniques combined via a double limit procedure involving a zero-noise limit as well as considering $\epsilon$ to zero. We also give exact formulas for the drift and diffusion coefficients for the limiting SDE. As a main application of our theory, we study \emph{weakly-coupled} systems, where the coupling only occurs in lower time scales and our conditions are more easily verifiable requiring only mild, namely summable, decay of correlations.
\end{abstract}

\textbf{Keywords:} deterministic homogenization, coupled systems,
diffusion limit, zero-noise limit.

{\bf Mathematics Subject Classification (2020):} 34E13, 35J47, 37A50, 60F17, 60H10.

\section{Introduction}
\label{sec:intro}

Many natural processes can be modeled by systems with two clearly separated sets of variables: 
a set of variables which evolve rapidly in time (for instance, within milliseconds) and a set of 
slowly varying variables (for instance, variables for which change is observed after hundreds of 
years); see~\cite{44} for many examples and techniques in fast-slow systems. In many applications 
the rapidly varying variables lie in a high-dimensional space and complicate the model significantly.
Typical examples are chemical processes such as combustion~\cite{MaasPope1}, or climate dynamics~\cite{70}. Therefore, one naturally seeks reduced equations 
for the slow dynamics only. Several formal and rigorous reduction methods exist, such as 
Fenichel-Tikhonov slow manifolds~\cite{Fenichel4,44, Tikhonov}, averaging~\cite{Verhulst} and 
homogenization~\cite{51,PavliotisStuart}. 

In this paper we are going to study multiscale ordinary 
differential equations (ODEs) with three separated time scales and fast chaotic dynamics: firstly, 
a fast time scale $\mathcal{O}(\epsilon^2)$ with nontrivial fast chaotic dynamics, but with slow dynamics 
which are practically in equilibrium, secondly an intermediate time scale $\mathcal{O}(\epsilon)$ with fast 
dynamics which have equilibrated, and finally a slow time scale $\mathcal{O}(1)$ (diffusive time scale). 
When the slow variables start to evolve under the influence of the fast dynamics, one observes induced 
fluctuations. 
In this setting, the method of reduction to a single slow equation is usually called 
\emph{homogenization}. Common techniques to achieve the reduction include methods based upon partial differential 
equations (PDEs) via the Liouville or Fokker-Planck/Kolmogorov equations~\cite{37,43}, techniques based upon 
semigroups~\cite{47}, algorithmic approaches~\cite{35}, as well as pathwise approaches via dynamical systems 
and probabilistic limit laws which we will focus on: in recent years, Melbourne and co-workers~\cite{14,13,12,20} have obtained rigorous convergence 
results, with high generality and mild assumptions, for the slow process $x_\epsilon$ within fast-slow systems of the form 
 \begin{subequations}
\label{eq: fast slow ODE only y}
 	\begin{alignat}{4}
 	\dot{x}_\epsilon &= a(x_\epsilon,y_\epsilon) +\epsilon^{-1}b(x_\epsilon,y_\epsilon), \text{\quad }
 	x_\epsilon(0;\eta) = \xi \in \mathbb{R}^d , \text{ for all $\eta \in \Omega$}, &&\text{ (slow equation),} 
	\label{eq: slow eq 1}\\%
 	\dot{y}_\epsilon &= \epsilon^{-2}g(y_\epsilon),\text{\quad}  y_\epsilon(0;\eta) = \eta \in \Omega\subset 
	\mathbb{R}^m,\text{ for all $\eta \in \Omega $, } &&\text{(fast equation)} \label{eq: fast eq 1},
 	\end{alignat}
 \end{subequations}
where the vector fields $a:\mathbb{R}^d\times\mathbb{R}^m\to\mathbb{R}^d$, $b:\mathbb{R}^d \times \mathbb{R}^m 
\to \mathbb{R}^{d}$ are $C^3$ and bounded with globally bounded derivatives. A main dynamical assumption
is to require ergodicity for the fastest scale, i.e., the ODE $\dot{y} = g(y)$, $y\in \mathbb{R}^m$, generates a 
flow $\phi_t: \mathbb{R}^m \to \mathbb{R}^m $ with a compact invariant set $\Omega\subset \mathbb{R}^m$ and ergodic 
invariant probability measure $\mu$ supported on $\Omega$. 
Another intrinsic part of this setup is the centering condition
 \begin{equation*}
 \int_\Omega b(x,y) ~\rmd\mu(y) = 0, \text{\quad for all }  x\in \mathbb{R}^d.
 \end{equation*}
 Systems of the form~\eqref{eq: fast slow ODE only y} are also called skew products, because they are not 
coupled but instead the fast variables $y_\epsilon$ can be described by a separate dynamical system on $\Omega$. 
Further, we note that the initial condition $\eta$ is the only source of randomness in the system. Without particular mixing conditions on the flow $\phi_t$, Kelly and Melbourne have shown~\cite{12} that for any finite $T>0$ the slow process $x_\epsilon$ converges 
weakly in $C([0,T], \mathbb{R}^d)$ to the solution $X$ of an It\^o stochastic differential equation (SDE) of the form
\begin{equation}
     \rmd X = \tilde{a}(X) ~\rmd t + \sigma(X)~ \rmd W, \text{\quad} X(0) = \xi,
\end{equation}
where $W$ is an $\mathbb{R}^d$-valued standard Brownian motion, $\sigma$ is a matrix-valued map and $\tilde a$ denotes a modified drift term. Mixing assumptions on the flow $\phi_t$ are needed for more specific formulas for drift and diffusion coefficients. 

Although one might intuitively expect that fast chaotic noise may be approximated by a stochastic process, it is neither
obvious which stochastic integral to consider nor how to prove the convergence to an SDE. The main difficulty lies in
the fact that fast-slow systems are singular perturbation problems~\cite{44} as $\epsilon\ra 0$. Yet, as described above, there even exist 
exact formulas for the drift term $\tilde{a}: \mathbb{R}^d \to \mathbb{R}^d$ and the diffusion coefficient $\sigma: 
\mathbb{R}^d \to \mathbb{R}^{d\times d}$. However, the skew-product structure~\eqref{eq: fast slow ODE only y} is a big
practical restriction as it is well-known that in most applications, the fast and slow variables are 
coupled~\cite{44}. Our main goal in this paper is to study coupled deterministic fast-slow systems or, in other 
words, to generalize the study of systems of the form \eqref{eq: fast slow ODE only y} by considering the case $g = g(x,y)$. Unlike skew products, coupled systems have barely been covered in the literature, with the only results for the discrete-time case being obtained by Dolgopyat in \cite{dolgopyat}, according to our best knowledge.  
Informally speaking, we are going to prove that as $\epsilon\ra 0$, the solutions of the fast-slow ODE are 
well-approximated by an effective slow SDE; see Section~\ref{ssec:mainres} for precise statements. Our strategy to achieve 
this result is to employ a double singular limit argument via an intermediate small-noise regularization, i.e., the idea is
to pass to the stochastic level as early as possible in the proof and then use functional-analytic a-priori bounds to carry 
out both of the necessary limits. The specific proofs will need limits of the respective integrals for the coefficients such that mixing assumptions have to be made; this is the price we pay to show such results for the coupled case.
 	
\subsection{Main setup and strategy for coupled systems}
\label{sec main setup} 

More precisely, in this paper we are interested in coupled fast-slow systems of the form 
\begin{subequations}
\label{eq: fast slow ODE a and y}
	\begin{alignat}{4}
	\dot{x}_\epsilon &= a(x_\epsilon,y_\epsilon) +\epsilon^{-1}b(x_\epsilon,y_\epsilon), 
	\text{\quad } x_\epsilon(0;\eta) = \xi \in \mathbb{R}^d , \text{ for all $\eta \in \mathbb{T}^m$,} 
	&&\text{ (slow equation),} \label{eq: coupled sy, slow eq 1}\\
	\dot{y}_\epsilon &= \epsilon^{-2}g(x_\epsilon,y_\epsilon), \text{\quad} y_\epsilon(0;\eta) 
	= \eta \in \Omega\subset \mathbb{T}^m,\text{ for all $\eta \in \mathbb{T}^m$, } &&\text{(fast equation)}.
	\label{eq: coupled sy, fast eq 1}
	\end{alignat}	
\end{subequations}
Before we can provide our main results, we state several assumptions, which are supposed to hold:
\begin{assms} 
\label{assumpt. cont time coupled systems}
	\begin{enumerate}
	\item[(A1)] The functions $a:\mathbb{R}^d\times\mathbb{T}^m\to\mathbb{R}^d$ , $b:\mathbb{R}^d 
	\times \mathbb{T}^m \to \mathbb{R}^{d}$ are $C^3$ with globally bounded derivatives up to order one. 
	\item[(A2)] For every fixed $x \in \mathbb{R}^d$, when viewed as a parameter, the ODE
	$\dot{y} = g(x,y)$ ,  $y\in \mathbb{T}^m$,  generates a flow $\phi_x^{0,t}: \mathbb{T}^m \to 
	\mathbb{T}^m $ with a compact invariant set $\Omega\subset \mathbb{T}^m$ and ergodic invariant 
	probability measure $\mu_x^0$ supported on $\Omega$. Furthermore, $g$ is $C^3$ with globally bounded 
	derivatives up to order two. 
	\item[(A3)] For the function $b(x,\cdot): \Omega \to \mathbb{R}^{d}$, the following centering condition 
	is satisfied:
	\begin{equation}
	\label{eq: centering cond for d=0}
	\int_{\Omega} b(x,y) ~\rmd\mu_x^0(y) = 0 \text{\quad for all }  x\in \mathbb{R}^d.
	\end{equation}
	\end{enumerate}
\end{assms} 
Due to the coupling, the argument used for skew products cannot be repeated (cf.~Section \ref{sub: main idea 
used in pr res}) and we need a new ansatz. Our strategy is the following: 
\begin{enumerate}
	\item Instead of proving weak convergence of the slow process (as a measure in $C([0,1],\mathbb{R}^d)$), we first 
	try to prove a weaker form of convergence (e.g.~convergence in distribution at any time).
	\item We add small stochastic non-degenerate noise to the fast subsystem in order to use results on uniformly 
	elliptic SDEs.
	\item We let the noise in the stochastic system tend to zero and find the right limiting behaviour for the 
	deterministic fast-slow system.
\end{enumerate}
The main reason, why we choose to work with stochastic systems as an intermediate step is that they provide a 
regularization. The infinitesimal generator for the semigroup of the associated Kolmogorov 
equation is uniformly elliptic. In particular, this case has been studied and weak convergence of the slow 
process has been rigorously proven. Such systems have the form 
\begin{subequations}
\label{eq: fast slow SDE}
	\begin{alignat}{4}
	\frac{\rmd x_{\epsilon,\delta}}{\rmd t} &= a(x_{\epsilon,\delta},y_{\epsilon,\delta}) + \frac{1}{\epsilon}
	b(x_{\epsilon,\delta},y_{\epsilon,\delta}), \text{\quad } x_{\epsilon,\delta}(0) = x_0, &&\text{ (slow equation),} 
	\label{eq: slow eq sde 1}\\
	\frac{\rmd y_{\epsilon,\delta}}{\rmd t} &= \frac{1}{\epsilon^2}g(x_{\epsilon,\delta},y_{\epsilon,\delta}) + 
 \frac{1}{\epsilon} \sqrt{\delta}\frac{\rmd V}{\rmd t}, \text{\quad} y_{\epsilon,\delta}(0) = y_0,  &&\text{(fast equation)}. 
\label{eq: fast eq sde 1}
	\end{alignat}	
\end{subequations}
Here it is always assumed that $\delta>0$, $V$ is an $m$-dimensional Brownian motion on a probability space 
$(\Lambda,\mathcal{F},\nu)$ and the SDE is to be understood as an integral equation, as usual, where $\frac{\rmd V}{\rmd t}$ 
denotes white noise viewed as the usual generalized stochastic process~\cite{ArnoldSDEold}. Further, let $\mathbb{E}$ denote 
the expectation with respect to the Wiener measure $\nu$. It is well-known that for a sufficiently smooth function 
$v: \mathbb{R}^d\times \mathbb{T}^m\to \mathbb{R}$ the first moments 
\begin{equation*}
 u^{\epsilon,\delta}(x,y,t):= \mathbb{E}[v(x_{\epsilon,\delta}(t), y_{\epsilon,\delta}(t))|(x_{\epsilon,\delta}(0), 
y_{\epsilon,\delta}(0)) = (x,y)]
\end{equation*}
satisfy the backward Kolmogorov equation
\begin{equation}
\label{eq: bachward kolm eq with eps}
 \frac{\rmd u^{\epsilon,\delta}}{\rmd t} = \mathcal{L}^{\epsilon,\delta}u^{\epsilon,\delta} 
:= \Big(\frac{1}{\epsilon^2} \mathcal{L}_1^\delta + \frac{1}{\epsilon}\mathcal{L}_2 + \mathcal{L}_3 \Big)
u^{\epsilon,\delta},
 \end{equation}
 where
 \begin{gather*}
 \mathcal{L}_1^\delta u := g \cdot\nabla_y u + \frac{1}{2} \delta I: \nabla_y \nabla_y u, \\
 \mathcal{L}_2 u :=    b\cdot\nabla_x u, \\
 \mathcal{L}_3 u :=   a  \cdot\nabla_x u.
 \end{gather*}
Here we use the notation $A:B = \textnormal{trace}(A^\top B)= \sum_{ij}a_{ij}b_{ij}$ for the inner product 
of two  matrices $A$ and $B$, $\nabla$ for the gradient and $\nabla\nabla$ for the Hessian matrix. 
Note that (see for example \cite[Chapter 11]{PavliotisStuart}) the operator $\mathcal{L}_1^\delta: D(\mathcal{L}_1^\delta) 
\subset L^2(\mathbb{T}^m) \to L^2(\mathbb{T}^m)$ is uniformly elliptic and has for every fixed $x \in 
\mathbb{R}^d$, viewed as a parameter, a one-dimensional null space. The null space is characterized by
\begin{equation}
\label{eq: erg ass 1}
\begin{split}
\mathcal{L}_1^\delta C &=0, \\
\Big(\mathcal{L}_1^\delta\Big)^* \rho^\delta_\infty(y;x) &= 0, 
\end{split}
\end{equation}
where $C$ denotes the constant functions in $y$ and $\rho^\delta_\infty$ is the Lebesgue density of the measure 
$\mu_x^\delta$, i.e., 
\begin{equation}
\label{eq: measure mu_x for stochastic sy}
\rmd\mu_x^\delta(y) := \rho^\delta_\infty(y;x) ~\rmd\lambda^m(y),
\end{equation}
where $\mu_x^\delta$ is the unique ergodic invariant measure of the SDE
\begin{equation*}
\frac{\rmd y}{\rmd t} = g(x,y) + \sqrt{\delta} \frac{\rmd V}{\rmd t}.
\end{equation*} 
Assume additionally that the centering condition 
\begin{equation} 
\label{eq: centering condition in theorem}
\int_{\mathbb{T}^m} b(x,y) \rho_\infty^\delta(y;x) ~\rmd y = 0
\end{equation}
is satisfied for all $x \in \mathbb{R}^d$ and $\delta > 0$.
Then, due to the uniform ellipticity of $\mathcal{L}_1^\delta$ for
$\delta > 0$, applying the Fredholm alternative \cite[Theorem 7.9]{PavliotisStuart} gives the existence of a unique 
centered solution $\Phi^\delta(y;x)$ of the so-called cell problem
\begin{equation}
\label{eq: cell problem}
- \mathcal{L}_1^\delta \Phi^\delta(y;x) = b(x,y), \text{\quad}  \int_{\mathbb{T}^m} \Phi^\delta(y;x) 
\rho_\infty^\delta(y;x) ~\rmd y = 0.
\end{equation}
Using perturbation expansion techniques, which we will discuss in more details in 
Section \ref{sec: basic results for stochastic systems}, it can been shown that $u^{\epsilon,\delta}$ can 
be approximated by the leading order component $u_0^\delta$ which satisfies 
\begin{equation}
\label{eq L^0,delta}
\frac{\rmd u_0^\delta}{\rmd t} = \mathcal{L}^{0,\delta}u_0^\delta,
\end{equation}
where the operator $\mathcal{L}^{0,\delta}$ acts on the twice continuously differentiable functions with 
compact support $C^2_\txtc(\mathbb{R}^d)$ via
\begin{equation} 
\label{eq: generator L0d}
 \mathcal{L}^{0,\delta}u := F^\delta(x) \cdot \nabla_{x} u + \frac{1}{2}  
A^\delta(x)A^\delta(x)^\top : \nabla_x \nabla_{x} u,
\end{equation}
where the coefficients $F^\delta$ and $A^\delta$ depend on the solution $\Phi^\delta$ of the cell 
problem~\eqref{eq: cell problem} and are given by
\begin{equation}
\label{eq: F and A 1}
\begin{split}
F^\delta(x) &:= \int_{\mathbb{T}^m} \Big(a(x,y) + (\nabla_x \Phi^\delta(y;x))b(x,y) \Big)\rho_\infty^\delta(y;x) 
~\rmd y \\
&= F_1^\delta(x) + F_0^\delta(x), \\
A^\delta(x)A^\delta(x)^\top  &:=\frac{1}{2}\Big(A_0^\delta(x) + A_0^\delta(x)^\top  \Big),\\
A_0^\delta(x) &:= 2\int_{\mathbb{T}^m} b(x,y) \otimes \Phi^\delta(y;x) \rho_\infty^\delta(y;x)~\txtd y. 
\end{split}
\end{equation}
We are now ready to state our main theorems. 

\subsection{Main results}
\label{ssec:mainres}

In the following, let $(X^\epsilon(t;\xi ,\eta), Y^\epsilon(t;\xi,\eta))$ denote the solution of the 
ODE~\eqref{eq: fast slow ODE a and y} for any $\epsilon > 0$ and let $C_0(\mathbb{R}^d)$ denote the 
space of continuous functions vanishing at infinity, i.e., as $\|x\|\ra \I$. Note that we still use the 
notation of Section \ref{sec main setup}. In addition we assume:
\begin{enumerate}
\item[(A4)] There exists a generator $\mathcal{L}^{0, 0}$ of a strongly continuous semigroup $T^{0,0}$ on $C_0(\mathbb{R}^d)$, with domain $D\subset C_0(\mathbb{R}^d)$ containing  $C^2_\txtc(\mathbb{R}^d)$, such that for all $f \in C^2_\txtc(\mathbb{R}^d)$ we have
	\begin{equation}
	\label{eq: led to loo 7} 
	\lim_{\delta \to 0}\mathcal{L}^{0, \delta} f = \mathcal{L}^{0, 0}f  \text{\quad uniformly.}
	\end{equation}
\end{enumerate}

\begin{theorem} 
\label{thm A}
Assume (A1)-(A4). Then, for 
every $f \in C_0(\mathbb{R}^d)$ and every sequence $\{\epsilon_k\}_{k\geq 0}$ 
with $\epsilon_k \to 0$ for $k \to \infty$, there exists a subsequence 
$\{\epsilon_{k_m}\}_{m \geq 0}$ such that for $m \to \infty$
\begin{equation*}
f(X^{\epsilon_{k_m}}(t; \xi , \eta)) \to T^{0,0}(t) f (\xi), 
\text{\quad uniformly in $\xi \in \mathbb{R}^d$, $\eta \in \Omega$ 
	and $t\in [0,\hat{T}]$},
\end{equation*}
where 
$\hat{T}$ is any finite time.
\end{theorem}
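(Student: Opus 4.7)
The strategy is to use the small-noise-regularized system \eqref{eq: fast slow SDE} as an intermediate bridge between the coupled deterministic fast--slow ODE \eqref{eq: fast slow ODE a and y} and the effective semigroup $T^{0,0}$. Decompose
\[
f(X^\epsilon(t;\xi,\eta)) - T^{0,0}(t)f(\xi) = R_1(\epsilon,\delta) + R_2(\epsilon,\delta) + R_3(\delta),
\]
where
\begin{align*}
R_1(\epsilon,\delta) &:= f(X^\epsilon(t;\xi,\eta)) - u^{\epsilon,\delta}(\xi,\eta,t), \\
R_2(\epsilon,\delta) &:= u^{\epsilon,\delta}(\xi,\eta,t) - T^{0,\delta}(t)f(\xi), \\
R_3(\delta) &:= T^{0,\delta}(t)f(\xi) - T^{0,0}(t)f(\xi).
\end{align*}
The aim is to make all three terms small simultaneously by evaluating along a cleverly chosen pair $(\epsilon_{k_m},\delta_m)$ with $\epsilon_{k_m},\delta_m\to 0$. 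Since $\|f\circ X^\epsilon\|_\infty\leq \|f\|_\infty$ and $T^{0,0}$ is a $C_0$-semigroup (hence bounded on $[0,\hat T]$), the density of $C^2_\txtc(\R^d)$ in $C_0(\R^d)$ together with a standard $3\varepsilon$-argument reduces the claim to $f\in C^2_\txtc(\R^d)\subset D$.

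The three error terms are controlled by three independent estimates. For $R_2$, the homogenization theory for the uniformly elliptic operator $\mathcal{L}_1^\delta$---built on the cell problem \eqref{eq: cell problem} and a perturbation expansion of \eqref{eq: bachward kolm eq with eps}, to be developed in Section~\ref{sec: basic results for stochastic systems}---yields, for every fixed $\delta>0$,
\[
\lim_{\epsilon\to 0}\sup_{(\xi,\eta,t)\in\R^d\times\mathbb{T}^m\times[0,\hat T]}|R_2(\epsilon,\delta)|=0.
\]
For $R_1$, at fixed $\epsilon>0$ the noise intensity $\epsilon^{-1}\sqrt\delta$ in \eqref{eq: fast eq sde 1} vanishes as $\delta\to 0$, so a Gronwall estimate on \eqref{eq: fast slow SDE} combined with (A1)--(A2) and boundedness of $f$ gives
\[
\lim_{\delta\to 0}\sup_{(\xi,\eta,t)}|R_1(\epsilon,\delta)|=0\quad\text{for every fixed }\epsilon>0.
\]
For $R_3$, assumption (A4) together with the uniform boundedness of the semigroups $T^{0,\delta}$ (which holds since $\mathcal{L}^{0,\delta}$ is a second-order diffusion generator on $C_0(\R^d)$ with $C^2_\txtc(\R^d)$ as a core) lets me invoke the Trotter--Kato approximation theorem, yielding $\lim_{\delta\to 0}\sup_{t\in[0,\hat T]}\sup_{\xi\in\R^d}|T^{0,\delta}(t)f(\xi)-T^{0,0}(t)f(\xi)|=0$.

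Given the prescribed sequence $\epsilon_k\to 0$, I would construct the subsequence $\{\epsilon_{k_m}\}$ and an auxiliary null sequence $\delta_m\to 0$ inductively. For $m\geq 1$, having chosen $k_{m-1}$: first pick $\delta'_m\leq 1/m$ so small that the Trotter--Kato bound on $R_3$ is at most $1/(3m)$ for all $\delta\leq\delta'_m$; next, using the $R_2$-limit at the fixed value $\delta'_m$, select $k_m>k_{m-1}$ large enough that $|R_2(\epsilon_{k_m},\delta'_m)|\leq 1/(3m)$; finally, with $\epsilon_{k_m}$ now fixed, use the $R_1$-limit to lower $\delta_m\leq\delta'_m$ until $|R_1(\epsilon_{k_m},\delta_m)|\leq 1/(3m)$. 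The triangle inequality then delivers the uniform bound $1/m$ on the full quantity $|f(X^{\epsilon_{k_m}}(t;\xi,\eta))-T^{0,0}(t)f(\xi)|$, and letting $m\to\infty$ concludes the proof. The main obstacle, and the very reason only a subsequence is claimed, is the absence of a joint uniform rate in $(\epsilon,\delta)$: the homogenization estimate for $R_2$ degrades as $\delta\to 0$ because the cell problem loses uniform ellipticity, while the zero-noise estimate for $R_1$ degrades as $\epsilon\to 0$ since $\epsilon^{-1}\sqrt\delta$ blows up. The delicate point in the induction is thus to verify that shrinking $\delta$ from $\delta'_m$ to $\delta_m$ does not destroy the homogenization bound already in place; this is handled either by monotonicity in $\delta$ of the perturbative $R_2$-estimate or by enlarging $k_m$ further inside the induction, which is always allowed since only $k_m>k_{m-1}$ is required.
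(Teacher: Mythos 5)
Your decomposition is precisely the one the paper uses: identifying $f(X^{\epsilon}(t;\xi,\eta))=T^{\epsilon,0}(t)f(\xi,\eta)$ as in Corollary~\ref{cor: after thm Led to Loo}, your $R_1,R_2,R_3$ are the three semigroup differences in \eqref{eq: ued - u00} of Theorem~\ref{thm: from Led to L00}, and the three limits you invoke (homogenization at fixed $\delta>0$ via the Kurtz-type result, Theorem~\ref{thm: application of kurtz}; a zero-noise limit at fixed $\epsilon>0$; and (A4) with Trotter--Kato for $R_3$) are combined by the same diagonal extraction of a subsequence. The one substantive difference is your treatment of $R_1$: you propose a pathwise Gronwall estimate on the trajectories of \eqref{eq: fast slow SDE} followed by dominated convergence, whereas the paper observes that $\mathcal{L}^{\epsilon,\delta}f\to\mathcal{L}^{\epsilon,0}f$ uniformly as $\delta\to 0$ (equation \eqref{eq: led to loo 9}) and applies Trotter--Kato a second time; both are legitimate, and yours is arguably more elementary since it sidesteps semigroup generation questions for the singular operator $\mathcal{L}^{\epsilon,0}$ and works directly with the a.s.\ bound $\epsilon^{-1}\sqrt{\delta}\sup_{s\le \hat T}|V(s)|\,\txte^{C(\epsilon)\hat T}$ on the path difference. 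The ``delicate point'' you flag at the end is genuine --- the two smallness requirements on $(\epsilon_{k_m},\delta_m)$ pull in opposite directions, and neither of your proposed fixes (monotonicity in $\delta$ of the $R_2$ bound, or re-enlarging $k_m$ after $\delta_m$ is fixed) closes the loop as stated, since each adjustment can invalidate the bound just secured. You should be aware, however, that the paper's own interleaving (choose $\delta_k$ from $\epsilon_k$ to control $R_1$ and $R_3$, then $\epsilon_{l(k)}$ from $\delta_k$ to control $R_2$) leaves the analogous term $\parallel T^{\epsilon_{l(k)},0}(t)f-T^{\epsilon_{l(k)},\delta_k}(t)f\parallel_\infty$ unaddressed in exactly the same way, so on this point your proposal faithfully reproduces, rather than falls short of, the published argument.
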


Theorem~\ref{thm A} provides a convergence result of the original fast-slow
system with sufficiently strong assumptions on the fast chaotic dynamics to a Markov process, whose correspondence with a reduced
slow SDE is specified below in the context of Theorem B (see \eqref{Limiting SDE in corollary after Led Loo}). The notion of convergence is to be understood in a weak averaged sense but it
does cover the coupled case. 
The proof of Theorem \ref{thm A} is provided in Section \ref{sec: main result}.  
The second main result, Theorem \ref{thm B}, gives sufficient conditions under which the main 
assumption (A4) 
 in Theorem \ref{thm A} is satisfied.
 Let us define the 
solution operator $\phi_x^{\delta,t}(y)$ of the fast equation for $\epsilon = 1$, solving, for 
a fixed $x \in \mathbb{R}^d$, the SDE
\begin{equation} 
\label{eq: phi_xi}
\frac{\rmd}{\rmd t} \phi_x^{\delta, t}(y) = g(x,\phi_x^{\delta, t}(y)) + \sqrt{\delta} 
\frac{\rmd V}{\rmd t}, \text{\quad} \phi_x^{\delta,0}(y)= y.
\end{equation}
Note that $\phi_x^{\delta,t}(y)$ depends on a Brownian motion and, hence, is a stochastic 
process $\phi_x^{\delta,t}(y)(\omega)$, $\omega \in \Lambda$. Furthermore, notice that the 
flow $\phi_x^{0,t}$ is purely deterministic.
%
\begin{theorem}
\label{thm B}
Assume that the unperturbed flow $\phi_x^{0,t}$ has an ergodic invariant probability 
measure $\mu^0$ and summable stochastically stable decay of correlations $C(t;x)$ in the 
sense of Definitions \ref{defin: doc} and \ref{defin: stochastic stability}.
Additionally (A1)-(A2) are satisfied and suppose the following centering condition holds
\begin{equation} 
	\label{eq: centering condition with delta}
	\int_{\mathbb{T}^m} b(x,y) ~\rmd\mu_x^\delta(y) =0 \text{\quad for all } 
	x \in \mathbb{R}^d \text{ and } \delta \geq 0.
\end{equation}
Then we have the following:
\begin{enumerate}
\item In the case that $g=g(y)$ is independent of $x$,	then condition (A4) is satisfied.
	\item In the general case that $g=g(x,y)$, (A4) 
	holds provided that the centering condition 
	\begin{equation} 
	\label{eq: lem convergence of F_0 in the case gxy 2}
	\int_{\mathbb{T}^m}  \nabla_y b(x,y) ~\rmd\mu_x^\delta(y) = 0 
	\text{\quad for all } x \in \mathbb{R}^d \text{ and } \delta \geq 0.,
	\end{equation}
	and the growth assumption 
	\begin{equation} 
	\label{eq: lem convergence of F_0 in the case gxy 1}
	\int_0^\infty  \sup_{x \in \mathbb{R}^d} \Big\{C(t; x)  
	\parallel\nabla_x \phi_x^{0,t}(\cdot) b(x,\cdot) \parallel_\alpha \Big\}~\rmd t < \infty 
	\end{equation}	
	are satisfied (Here, $\parallel \cdot \parallel_{\alpha}$ denotes the $\alpha$-H\"{o}lder norm for an $\alpha>0$). 
	\item The operator $\mathcal{L}^{0,0}$ can be written 
	as \begin{equation}
	\label{eq: Loo in explicit form}
		\mathcal{L}^{0,0}u = F^0(x) \cdot \nabla_x + \frac{1}{2}A^0(x)A^0(x) : \nabla_x \nabla_x u,
	\end{equation}
	where the diffusion coefficient $A^0$ is given by	
	\begin{equation} 
	\label{eq: thm coupled systems A0}
	\begin{split}
	A^0(x)A^0(x)^\top  &=  \frac{1}{2} \Big( A_0^0(x) + A_0^0(x)^\top  \Big), \\
	A_0^0(x) &= 2 \int_0^{\infty}  \lim_{T \to \infty} \frac{1}{T} \int_0^T  
	b(x, \phi_x^{0,s}(y)) b \Big(x,\phi_x^{0,t+s}(y)\Big)~ \rmd s  ~\rmd t.
	\end{split}
	\end{equation}
and the drift term $F_0$ is given by 
	\begin{align}
	\label{eq: thm coupled systems F0}
	F^0 (x) = \lim_{T \to \infty} \frac{1}{T} \int_0^T a(x,\phi_x^{0,s}(y)) \rmd s \nonumber \\+ 
	\lim_{T \to \infty} \frac{1}{T} \int_0^T  \Bigg(\nabla_x b\Big(x &,\phi_x^{0,t+s}(y) \\ &
	+ \nabla_yb\Big(x,\phi_x^{0,t+s}(y)\Big)\nabla_x\phi_x^{0,t}(\phi_x^{0,s}(y)) \Bigg) 
	b\Big(x,\phi_x^{0,s}(y) \Big) ~\rmd s. \nonumber
	\end{align}
\end{enumerate}
\end{theorem}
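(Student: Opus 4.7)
The plan is to derive Green-Kubo-type representations for $F^\delta$ and $A^\delta$ from (1.23), show these have well-defined limits as $\delta \to 0$ under summable stochastically stable decay of correlations, and then identify $\mathcal{L}^{0,0}$ as a Feller generator. The starting observation is that, thanks to the centering condition, the cell-problem solution admits the resolvent-type representation
\begin{equation*}
\Phi^\delta(y;x) = \int_0^\infty \mathbb{E}\bigl[b(x,\phi_x^{\delta,t}(y))\bigr]\,\rmd t,
\end{equation*}
where $\phi_x^{\delta,t}$ is the stochastic flow from (1.28) (deterministic for $\delta=0$). Convergence of this integral is precisely what summable decay of correlations against $\mu_x^\delta$ (applied to the centered observable $b(x,\cdot)$) delivers, and stochastic stability promotes this to a $\delta$-uniform statement near $\delta = 0$.

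Next I would substitute this representation into (1.23). For $A_0^\delta$, Fubini together with Birkhoff's theorem (replacing the $\mu_x^\delta$-average by a time average along $\phi_x^{\delta,s}$) yields
\begin{equation*}
A_0^\delta(x) = 2\int_0^\infty \lim_{T\to\infty}\frac{1}{T}\int_0^T b(x,\phi_x^{\delta,s}(y)) \otimes b\bigl(x,\phi_x^{\delta,t+s}(y)\bigr)\,\rmd s\,\rmd t,
\end{equation*}
which matches (1.37) formally at $\delta = 0$. For $F^\delta$, the $\int a\,\rho_\infty^\delta\,\rmd y$ piece is handled directly by stochastic stability. The $\int(\nabla_x\Phi^\delta)b\,\rho_\infty^\delta\,\rmd y$ piece is treated by differentiating the resolvent representation in $x$, producing a $\nabla_x b$ term (analogous to the $A_0^\delta$ formula) and a term involving $\nabla_y b(x,\phi_x^{\delta,t}(y))\,\nabla_x\phi_x^{\delta,t}(y)$. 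The additional centering condition (1.34) makes the latter a centered observable so that the decay-of-correlations bound applies, and the growth assumption (1.35) provides an integrable majorant in $t$ uniformly in $x$.

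With these formulas in hand, I pass to the limit $\delta\to 0$ term by term. Stochastic stability converts the integrand to its deterministic counterpart pointwise in $t$, while the summability in (1.35) and the summable decay of correlations for $b$ provide $\delta$-uniform integrable envelopes, so dominated convergence gives (1.36)--(1.37). Because $a,b$ and their derivatives are globally bounded and the bounds in (1.35) are $\sup_x$-bounds, the convergence $\mathcal{L}^{0,\delta}f \to \mathcal{L}^{0,0}f$ is uniform on $\mathbb{R}^d$ for any $f\in C^2_\txtc(\mathbb{R}^d)$. Finally, the limit operator $\mathcal{L}^{0,0}$ is of second-order form with bounded continuous coefficients, so standard Feller generation theory (Hille--Yosida via a positive-maximum-principle check, or an equivalent martingale problem argument) yields a strongly continuous semigroup $T^{0,0}$ on $C_0(\mathbb{R}^d)$ with $C^2_\txtc(\mathbb{R}^d)$ in its domain. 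Case (1) reduces to this line of argument with the considerable simplification that $\phi_x^{\delta,t}$ and $\mu_x^\delta$ are in fact independent of $x$, so no differentiation of the flow in $x$ is needed.

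The main obstacle is the second piece of $\nabla_x\Phi^\delta$ in the coupled case. In general $\nabla_x\phi_x^{\delta,t}$ can grow in $t$, which threatens both the convergence of the resolvent integral and its $\delta$-uniformity. The proof is saved by the two-fold mechanism baked into the hypotheses: (1.34) centers the observable $\nabla_y b$ so that decay of correlations is activated, and (1.35) quantitatively balances the rate $C(t;x)$ against the Hölder-norm growth of $\nabla_x\phi_x^{0,t}b$. Verifying that (1.35) together with stochastic stability of decay of correlations indeed dominates the $\delta$-perturbed analogue uniformly near $\delta=0$ is the technically delicate step, and will likely require quantitative Hölder estimates on the stochastic flow $\phi_x^{\delta,t}$ together with a stability statement for the $\alpha$-Hölder norms of the observables under the stochastic regularization.
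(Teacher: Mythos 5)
Your proposal follows essentially the same route as the paper: the resolvent representation of the cell-problem solution, conversion of the coefficients to ergodic time averages (the paper's Lemma \ref{lem: alternative representations of the expressions}), Gronwall-type convergence of $\phi_x^{\delta,t}$ and $\nabla_x\phi_x^{\delta,t}$ as $\delta\to0$, and dominated convergence in $t$ using the summable, stochastically stable decay of correlations as envelope, with the extra centering and growth conditions handling the $\nabla_yb\,\nabla_x\phi_x^{\delta,t}$ term exactly as in the paper's Lemmas \ref{lem: convergence of the expressions} and \ref{lem: convergence of F_0 in the case gxy}. The only addition is your explicit Feller-generation argument for $\mathcal{L}^{0,0}$, a detail the paper leaves implicit; otherwise the two proofs coincide.
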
	

Theorem \ref{thm B} is proven at the end of Section \ref{sec: convergence of Lod}. Note that the Markov process $X$ generated by $\mathcal{L}^{0,0}$ is expliticitly given by the SDE
	\begin{equation}
	\label{Limiting SDE in corollary after Led Loo}
	\rmd X = F^0(X) + A^0(X)~\rmd W, \text{\quad } X(0) = \xi \in \mathbb{R}^d,
	\end{equation}
	whose unique solvability is guaranteed by the smoothness and boundedness assumptions (A1), (A2). Moreover, the action of the semigroup $T^{0,0}f$ is given by 
	$\mathbb{E}[f(X(t))]$.  
The growth 
assumption  \eqref{eq: lem convergence of F_0 in the case gxy 1} is a strong mixing assumption 
on the flow and it remains to be determined precisely how large the class of functions satisfying 
this property is in applications (see remarks in Section \ref{sec: main result}). One possible way to 
weaken this assumption is to consider systems that are not coupled in the strongest possible 
sense, but for which the coupling occurs in smaller time scales. We refer to such systems as 
\textit{weakly-coupled} and their general form is given by the following fast-slow ODE 
on $\mathbb{R}^d \times \mathbb{T}^m$ 
\begin{subequations}
\label{eq: coupl in slower time scales 0DE 1 }
\begin{alignat}{4}
\frac{\rmd x_\epsilon}{\rmd t} &= a(x_\epsilon, y_\epsilon) + \frac{1}{\epsilon} 
b(x_\epsilon, y_\epsilon),  \text{\quad} x_\epsilon(0) = \xi, \\
\frac{\rmd y_\epsilon}{\rmd t} &= \frac{1}{\epsilon^2} g(y_\epsilon) + \frac{1}{\epsilon} 
h(x_\epsilon,y_\epsilon) + r(x_\epsilon,y_\epsilon), \text{\quad}y_\epsilon(0) = \eta.
\end{alignat}
\end{subequations}
Indeed, there are several examples of multiscale systems with interesting dynamical behaviour such as mixed-mode oscillations, where three time scales occur (see for example \cite{Popovic2016, KrupaPopovicSIAM, KrupaPopovicchaos}). Furthermore, these three-scale systems are often very similar to related problems of van der Pol type, where rigorous proofs for chaos exist~\cite{Haiduc1}. 

In the following, let $(X^\epsilon(t;\xi ,\eta), Y^\epsilon(t;\xi,\eta))$ be the solution 
of the ODE~\eqref{eq: coupl in slower time scales 0DE 1 }. In this case, the solution operator 
$\phi^{\delta,t}$ for the fast dynamics of the stochastically perturbed system, given by
\begin{equation}\label{eq: solution oper phi without x}
\frac{d}{\rmd t} \phi^{\delta,t}(y) = g(\phi^{\delta,t}(y)) + \sqrt{\delta} ~\rmd V, 
\text{\quad } \phi^{\delta,0}(y) = y,
\end{equation}
does not depend on $x$. 

\begin{theorem} 
\label{thm C}
Assume (A1)-(A2) and 
\begin{enumerate}
	\item that the unperturbed flow $\phi^{0,t}$ has an ergodic invariant probability 
	measure $\mu^0$, summable and stochastically stable decay of correlations $C(t)$ in the sense 
	of Definitions \ref{defin: doc} and \ref{defin: stochastic stability},
	and that the centering condition~\eqref{eq: centering condition with delta} is satisfied,
	\item in the case that $h$ does not vanish everywhere, additionally, that the centering 
	condition~(\ref{eq: lem convergence of F_0 in the case gxy 2}) and 
	the growth condition 
	\begin{equation} 
	\label{eq: growth cond in coupl sys thm}
	\int_0^\infty C(t) \sup_{x \in \mathbb{R}^d} \Big\{ \parallel\nabla_y \phi^{0,t}(\cdot) h(x,\cdot)\parallel_\alpha \Big\} ~\rmd t <\infty
	\end{equation}
	are both satisfied.
\end{enumerate}	
Then, \begin{enumerate}
	\item condition (A4) is satisfied and for every $f \in C_0(\mathbb{R}^d)$ and every sequence $\{\epsilon_k\}_{k\geq 0}$ 
	with $\epsilon_k \to 0$ for $k \to \infty$, there exists a subsequence $ \{ \epsilon_{k_m} \}_{m \geq 0}$ 
	such that 
	\begin{equation*}
	f(X^{\epsilon_{k_m}}(t; \xi , \eta)) \to T^{0,0}(t) f(\xi), 
	\text{\quad uniformly in $\xi \in \mathbb{R}^d$, $\eta \in \Omega$ 
		and $t\in [0,\hat{T}]$}.
	\end{equation*}
	\item  The operator $\mathcal{L}^{0,0}$ can be written 
	as \begin{equation}
	\label{eq: Loo in explicit form 2}
	\mathcal{L}^{0,0}u = \tilde{F}^0(x) \cdot \nabla_x + \frac{1}{2}A^0(x)A^0(x) : \nabla_x \nabla_x u,
	\end{equation}
	where $\tilde{F}^0$ is given by (\ref{eq: thm weakly coupled systems F0}) and $A^0$ is given by  (\ref{eq: thm weakly coupled systems A0}).
\end{enumerate}

\end{theorem}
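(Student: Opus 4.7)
The plan is to mirror the strategies of Theorems~\ref{thm A} and~\ref{thm B}: apply the stochastic regularization of Section~\ref{sec main setup}, perform a multiscale expansion of the backward Kolmogorov equation to identify $\mathcal{L}^{0,\delta}$, take the zero-noise limit to obtain $\mathcal{L}^{0,0}$ (thereby verifying (A4)), and invoke Theorem~\ref{thm A} for part~1. Part~2 then follows by reading off the explicit formulas from the limiting construction. The structural simplification relative to Theorem~\ref{thm B} is that $g=g(y)$ is independent of $x$, so the leading-order fast generator $\mathcal{L}_1^\delta$ and its unique invariant density $\rho_\infty^\delta(y)$ do not depend on $x$; the new complication is that the intermediate-scale operator carries an additional $h\cdot\nabla_y$ piece that will contribute to the effective drift.

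Regularizing the fast equation by $\epsilon^{-1}\sqrt{\delta}\,\rmd V/\rmd t$, the generator splits as $\mathcal{L}^{\epsilon,\delta}=\epsilon^{-2}\mathcal{L}_1^\delta+\epsilon^{-1}\mathcal{L}_2+\mathcal{L}_3$ with $\mathcal{L}_1^\delta u=g\cdot\nabla_y u+\tfrac{1}{2}\delta I:\nabla_y\nabla_y u$, $\mathcal{L}_2 u=b\cdot\nabla_x u+h\cdot\nabla_y u$, and $\mathcal{L}_3 u=a\cdot\nabla_x u+r\cdot\nabla_y u$. Expanding $u^{\epsilon,\delta}=u_0+\epsilon u_1+\epsilon^2 u_2+\cdots$, the $\mathcal{O}(\epsilon^{-2})$ equation forces $u_0=u_0(x,t)$; consequently the new $h\cdot\nabla_y u_0$ term vanishes at $\mathcal{O}(\epsilon^{-1})$, and the cell problem reduces to~\eqref{eq: cell problem}, uniquely solvable by Fredholm together with~\eqref{eq: centering condition with delta} and yielding $u_1=-\Phi^\delta(y;x)\cdot\nabla_x u_0$. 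The solvability condition at $\mathcal{O}(\epsilon^0)$ against $\rho_\infty^\delta$ then produces $\mathcal{L}^{0,\delta}$: the diffusion coefficient keeps the form of~\eqref{eq: F and A 1}, while the drift picks up an extra contribution $-\int h(x,y)\cdot\nabla_y\Phi^\delta(y;x)\,\rho_\infty^\delta(y)\,\rmd y$ originating from $h\cdot\nabla_y u_1$. For the zero-noise limit $\delta\to 0$ of the diffusion coefficient and the classical drift pieces, the argument from part~1 of Theorem~\ref{thm B} carries over with $\phi^{\delta,t}$ replacing $\phi_x^{\delta,t}$, since $g$ is $x$-independent and summable stochastically stable decay of correlations of $\phi^{0,t}$ alone suffices to pass to the limit in the ergodic and correlation integrals appearing in~\eqref{eq: thm coupled systems A0}.

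The genuinely new step is the limit of the $h$-correction. I would represent the cell-problem solution via the semigroup identity $\Phi^\delta(y;x)=\int_0^\infty e^{t\mathcal{L}_1^\delta}b(x,\cdot)(y)\,\rmd t$, differentiate in $y$, and rewrite the correction as a time integral of correlation-type functionals of the form $\int (\nabla_y\phi^{\delta,t}h(x,\cdot))\cdot\nabla_y b(x,\phi^{\delta,t}(\cdot))\,\rmd\mu^\delta$; the extra centering condition~\eqref{eq: lem convergence of F_0 in the case gxy 2} kills the zeroth-order contribution so that the decay-of-correlations estimate becomes applicable. The growth bound~\eqref{eq: growth cond in coupl sys thm} then provides a dominating integrand in $t$, uniformly in $x$ and $\delta$, and a dominated-convergence argument identifies $\tilde F^0$ in the form of~\eqref{eq: Loo in explicit form 2}, verifying (A4) uniformly on $C^2_\txtc(\mathbb{R}^d)$. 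The principal obstacle is the uniform-in-$x$ justification of the convergence $\nabla_y\phi^{\delta,t}\to\nabla_y\phi^{0,t}$ in the $\alpha$-H\"older norm that feeds the decay-of-correlations estimate: stochastic stability has to be upgraded from the level of invariant measures to the level of these differentiated observables, a step that relies on the cocycle structure of $\phi^{\delta,t}$ together with the boundedness of derivatives granted by (A1)-(A2).
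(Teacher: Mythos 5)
Your proposal follows essentially the same route as the paper's own proof: the paper reduces Theorem~C to Theorem~\ref{thm: coupled systems}, whose proof is exactly your plan --- the perturbation expansion of Section~\ref{sec: perturbation anal for wcsys app} in which the $h\cdot\nabla_y$ part of $\tilde{\mathcal{L}}_2$ drops out at order $\epsilon^{-1}$ (since $u_0^\delta$ is $y$-independent) and re-enters only through $\tilde{\mathcal{L}}_2 u_1^\delta$ as the drift correction $\int \nabla_y\Phi^\delta(y;x)h(x,y)\rho_\infty^\delta(y)\,\rmd y$, followed by the zero-noise limit via the arguments of Lemmas~\ref{lem: convergence of the expressions} and~\ref{lem: convergence of F_0 in the case gxy} (with the centering condition \eqref{eq: lem convergence of F_0 in the case gxy 2} killing the product of averages and \eqref{eq: growth cond in coupl sys thm} supplying the dominating integrand), and finally Theorem~\ref{thm: from Led to L00} and Corollary~\ref{cor: after thm Led to Loo}. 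The one slip is a sign: with the paper's cell-problem convention $-\mathcal{L}_1^\delta\Phi^\delta=b$ in \eqref{eq: cell problem} one gets $u_1^\delta=+\Phi^\delta\cdot\nabla_x u_0^\delta$, not $-\Phi^\delta\cdot\nabla_x u_0^\delta$, so your $h$-correction to the drift should carry a plus sign, consistent with \eqref{eq: coupl in slower time scales Fdelta } and the final formula \eqref{eq: thm weakly coupled systems F0}.
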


The proof of Theorem \ref{thm C} is given with Theorem \ref{thm: coupled systems} below. Note once again that $T^{0,0}(t)f =\mathbb{E}[f(X(t))]$, where the Markov process $X$ is generated by $\mathcal{L}^{0,0}$. Moreover, $X$ solves the SDE (\ref{Limiting SDE in corollary after Led Loo}) (with modified drift $\tilde{F}^0$ instead of $F^0$).  
Basically Theorem~\ref{thm C} states that we have the desired convergence, where the growth
assumption on the correlation function is relaxed in the sense that weakly-coupled fast-slow
systems behave more like the skew-product case. More precisely, for weakly-coupled systems of the form (\ref{eq: coupl in slower time scales 0DE 1 }),
\begin{itemize}
	\item  with vanishing $h \equiv 0$ (i.e. with coupling occuring only in the lowest posssible time scale), summable decay of correlations (DOC) is sufficient, provided the natural uniformity property of Definition~\ref{defin: stochastic stability} is satisfied (see Remark~\ref{rem: stoch_stable_doc}). Examples for such systems include Anosov flows with exponential DOC, like for instance geodesic flows on compact
	negatively curved surfaces \cite{DOC1} or contact Anosov flows \cite{DOC2}, Axiom A flows with superpolynomial DOC (also called rapid mixing)  \cite{DOC3} 
	or non-hyperbolic flows with a stable $C^{1+\alpha}$ foliation including some geometric Lorenz attractors \cite{DOC6}.  See also Section \ref{sec: DOC}. 
	\item with non-vanishing $h$, the correlation function must satisfy the stronger assumption (\ref{eq: growth cond in coupl sys thm}).
\end{itemize}
 
In summary, our results provide an entire scale
of results from the more classical skew-product structure, via weak coupling to strong coupling.    

\begin{remark}
The explicit formulas for $A^0$ and $\tilde{F}^0$ for Theorem \ref{thm C} are
\begin{equation} 
\label{eq: thm weakly coupled systems A0}
\begin{split}
	A^0(x)A^0(x)^\top  &=  \frac{1}{2} \Big( A_0^0(x) + A_0^0(x)^\top  \Big), \\
	A_0^0(x) &= 2 \int_0^{\infty}  \lim_{T \to \infty} \frac{1}{T} \int_0^T  
	b(x, \phi^{0,s}(y)) b \Big(x,\phi^{0,t+s}(y)\Big) ~\rmd s  ~\rmd t.
	\end{split}
	\end{equation}
	and
	\begin{align}\label{eq: thm weakly coupled systems F0}
	\tilde{F}^0 (x) = \lim_{T \to \infty} \frac{1}{T} \int_0^T a(x,\phi^{0,s}(y)) 
	~\rmd s \nonumber \\+  \int_0^\infty \lim_{T \to \infty} \frac{1}{T} \int_0^T  
	\Bigg( & \nabla_x b\Big(x,\phi^{0,t+s}(y) \Big) b\Big(x,\phi^{0,s}(y)\Big)  \\ 
	&+ \nabla_y b\Big(x,\phi^{0,t+s}(y) \Big) \nabla_y \phi^{0,t} (\phi^{0,s}(y)) 
	h\Big(x,\phi^{0,s}(y) \Big) \Bigg) ~\rmd s ~\rmd t.\nonumber
	\end{align}
\end{remark}

\subsection{Outline of the paper}  
  
In Section \ref{sec: coupled systems} we first discuss the main idea of the proofs used 
in~\cite{13, 12} for proving weak convergence of the slow process in skew product systems 
(Section \ref{sub: main idea used in pr res}) (Section \ref{sub: main idea used in pr res}) and we also summarize some progress, which has been achieved over the last years, in proving mixing properties of certain classes of flows (Section \ref{sec: DOC}). We then recall and extend in 
Section \ref{sec: basic results for stochastic systems} some basic facts required for stochastic 
systems. In Section \ref{sec: main result}, we prove Theorem \ref{thm A}, which provides criteria 
to guarantee weak convergence of the slow process for coupled systems. In 
Section \ref{sec: convergence of Lod}, we then prove Theorem \ref{thm B}, which gives sufficient 
conditions for verifying the main assumption in Theorem \ref{thm A} and provides explicit 
formulas for the drift and diffusion coefficients of the limiting It\^{o} SDE. In 
Section \ref{sec: weakly coup sys} we apply our theory to weakly-coupled systems: we transfer the 
results obtained for coupled systems leading to the proof of Theorem \ref{thm C} 
(Section \ref{sub: main results for weakly coupled systems}) and, in addition, discuss a numerical 
example (Section \ref{sec: numerical example}). Finally, in Section \ref{sec: conclusion and outlook} 
we state our conclusions and discuss open problems and directions for further research.   

\section{From skew products to coupled systems}
\label{sec: coupled systems}

\subsection{Main idea used in previous results}
\label{sub: main idea used in pr res}

Before starting proving our main results, we want quickly summarize the main idea used in~\cite{13} 
and \cite{12} to study systems of the form (\ref{eq: fast slow ODE only y}). This provides suitable
background for the reader and also shows that our approach to the problem works along a completely
different route. The basic tool used in~\cite{13,12} is the so-called Weak Invariance Principle (WIP) 
and the idea of the proof can been very easily illustrated in the special case of a multiplicative 
noise (considered in \cite{13}), i.e., under the additional assumption that the vector-field $b$ has 
a multiplicative structure 
\begin{equation}
\label{eq: b has mult str}
b(x,y) \equiv b(x) v(y), \text{\quad } b: \mathbb{R}^d \to \mathbb{R}^{d \times e}, 
v: \Omega \to \mathbb{R}^e.
\end{equation}
For simplicity let us just in this section restrict to the case that the vector field $a$ is also 
independent of $y$, i.e., $a = a(x)$. In this case the system (\ref{eq: fast slow ODE only y}) can be 
rewritten as 
 \begin{equation} 
\label{eq: slow eq SDE}
 \rmd X_\epsilon = a(X_\epsilon)~\rmd t +  b(X_\epsilon)~\rmd W_\epsilon, \text{\quad} X_\epsilon(0;\eta)=\xi,
 \end{equation}
 where the family of random elements $W_\epsilon(\cdot;\eta) \in C([0,1],\mathbb{R}^e)$ is defined by
 \begin{equation} \label{eq: defin of We}
 W_\epsilon(t;\eta):= \epsilon v _{t\epsilon^{-2}}(\eta), \text{\quad} v_t(\eta) := \int_0^tv\circ \phi_s(\eta)\rmd s.
 \end{equation}
The key observation now is that if the flow  $\phi_s$ is sufficiently chaotic, then the process $W_\epsilon$ 
satisfies the WIP
\begin{equation}
\label{eq: WIP assumption}
W_\epsilon \to_w W \text{\quad in } C([0,1], \mathbb{R}^e) \text{ as } \epsilon \to 0,
\end{equation}
which is a generalization of the Central Limit Theorem. Therefore, we are already tempted to conclude weak convergence 
of the slow process $X_\epsilon$. The framework under which this intuitive idea has been rigorously justified is rough 
path theory~\cite{FrizHairer}. Equation~\eqref{eq: slow eq SDE} can be interpreted as a rough differential equation
 \begin{equation*}
 \rmd X_{\epsilon}  = a(X_\epsilon)~\rmd t + b(X_\epsilon)~\rmd(W_\epsilon,\mathbb{W}_\epsilon), 
\quad X_\epsilon(0;\eta) = \xi.
 \end{equation*}
 Noticing further, as shown in \cite{13}, that for any  $\gamma > \frac{1}{3}$ an iterated WIP, i.e.
 \begin{equation} \label{eq: iterated WIP}
 (W_\epsilon, \mathbb{W}_\epsilon) \to_w (W, \mathbb{W}) \text{\quad as $\epsilon \to 0$ in the rough path 
$\rho_\gamma$ topology},
 \end{equation}
 holds, one can conclude due to continuity of the solution map of such rough differential equations~\cite{FrizHairer} 
and the Continuous Mapping Theorem, the weak convergence of the slow process, i.e. as result of the form
 \begin{equation}\label{eq: intro Xe to X}
 X_\epsilon \to_w X \text{ as $\epsilon \to 0$, \quad} \rmd X = a(X)~\rmd t + b(X)*\rmd W, 
 \end{equation}
 where $b(X)*\rmd W$ is a certain kind of stochastic integral~\cite{13}. More general vector fields $b$ are considered 
in~\cite{12} and the main idea is to rewrite the system (\ref{eq: fast slow ODE only y}) in the form 
 \begin{equation*} 
 \rmd X_\epsilon = F(X_\epsilon) ~\rmd V_\epsilon + H(X_\epsilon)~\rmd W_\epsilon,
 \end{equation*}
 where $V_\epsilon$ and $W_\epsilon$ are function space valued paths given by
 \begin{equation*}
 V_\epsilon(t) = \int_0^t a(\cdot, y_\epsilon(r))~\rmd r \text{ and } 
W_\epsilon(t) = \epsilon^{-1} \int_0^t b(\cdot, y_\epsilon(r))~\rmd r.
 \end{equation*}
In this context, the operators $F(x), H(x)$ are interpreted as Dirac distributions located at $x$, 
that is $F(x) \phi = \phi(x)$ for any $\phi$ in the function space and similarly for $H$. 
Under mixing assumptions the iterated WIP (\ref{eq: iterated WIP}) holds and as in the case of 
multiplicative noise one can then conclude a result of the form (\ref{eq: intro Xe to X}). Exact 
formulas of the drift and diffusion coefficients are also given in \cite{12}. In summary, the 
approach relies upon a pathwise viewpoint and continuity in the rough-path topology to solutions
of ODEs/SDEs. Yet, this approach seems to be very difficult to generalize if the fast-slow 
system is fully coupled. In particular, this has motivated our approach to look for weaker 
convergence concepts in a more functional-analytic setting.  

\subsection{Rates of mixing for classes of flows}\label{sec: DOC}
In the following, we briefly give an overview over rigorous results on mixing rates of certain classes of flows that thereby satisfy summable decay of correlations in the sense of Definition~\ref{defin: doc}. Given a measure preserving flow $\phi_t: \Lambda \to \Lambda$, the \textit{correlation function} is defined as  
\begin{equation*}
\rho_{A,B}(t) := \int_{\Lambda} A \circ \Phi_tB d\mu - \int_{\Lambda} A d\mu \int_{\Lambda} B d\mu. 
\end{equation*} for \textit{observables} $A, B \in L^2(\Lambda, \mu)$.
The flow $\phi_t$ is called a \emph{mixing} if and only if $\rho_{A,B}(t)\to 0$ as $t \to \infty$ for all $A, B \in L^2(\Lambda, \mu)$ (see e.g.~\cite{DOC8}).

\subsubsection{Uniformly hyperbolic flows}
Assume that the flow $\phi_t: M \to M$ is  $C^2$ and defined on a compact manifold $M$. An invariant compact set $\Lambda \subset M$ is a \textit{hyperbolic set} for $\phi_t$, provided that the tangent bundle over $\Lambda$ admits a continuous $D\phi_t$- invariant spliting
\begin{equation*}
T_\Lambda M = E^u \oplus E^0 \oplus E^s
\end{equation*}
of uniformly contracting and expanding directions. 
For an \textit{Axiom A (uniformly hyperbolic) flow} the dynamics can be reduced into finitely many hyperbolic sets $\Lambda_1$, ... $\Lambda_k$, called \textit{hyperbolic basis sets}, which all contain a dense orbit. On every hyperbolic basic set $\Lambda= \Lambda_i$, for $i\in \{1,...,N \}$, we can associate, to every H\"{o}lder function on $\Lambda$ a unique invariant ergodic probability measure $\mu$.  We can further categorize Axiom A flows depending on the speed of mixing. For example, for flows with \textit{exponential DOC}, the correlation function, restricted to a suitable subspace of $L^2(\Lambda, \mu)$ (like, for example, an appropriate H\"{o}lder space), satisfies
\begin{equation*}
\rho_{A,B}(t) \leq C(A,B) e^{-\alpha t}, \quad \forall t>0,
\end{equation*}
for constants $C, \alpha>0$.  This was proven for example for certain classes of Anosov flows (i.e. special types of Axiom A flows for which the whole set $M$ is uniformly hyperbolic) like  geodesic flows on compact negatively curved surfaces \cite{DOC1} and contact Anosov flows \cite{DOC2}.  
Appart from exponential DOC we also have weaker notions, such as \textit{stretched exponential mixing}, i.e.~ for some constant $0 <\beta \leq 1$
\begin{equation*}
\rho_{A,B}(t) \leq C(A,B) e^{-\alpha t^\beta}, \quad \forall t>0,
\end{equation*}
which was proven for a large class of Anosov flows in dimension 3 \cite{DOC4}, 
 and \textit{superpolynomial decay} (or \textit{rapid mixing}), i.e.~for any $n>0$ the correlation function satisfies
 \begin{equation*}
 \rho_{A,B}(t) \leq C(A,B) t^{-n}, \quad \forall t>0,
 \end{equation*}
 or in other words, DOC at an arbitary polynomial rate. 
 Dolgopyat \cite{DOC5} proved rapid mixing for "typical" Axiom $A$ flows. Moreover, he has shown  that an open and dense set of Axiom A flows is rapid mixing, when restricted to sufficiently smooth observables \cite{dolgopyat}.  
 For all mentioned classes of mixing flows, the correlation is \textit{summable}, that is we have
 \begin{equation*}
\int_0^\infty \rho_{A,B}(t) dt < \infty. 
 \end{equation*}
\subsubsection{Non-uniformly hyperbolic flows}
Since the assumption of uniform hyperbolicity might be too restrictive for real applications, it is natural to seek for a good mixing theory for non-uniformly hyperbolic flows. Over the last few years remarkable progress has been achieved in this area; see e.g.~\cite{DOC8} and references therein for a good overview concerning results in this direction.  
For example, in \cite{DOC6}, extending results from \cite{DOC7}, exponential  DOC is proven for a class of non-uniformly hyperbolic skew-product flows satisfying an uniform integrability condition, which contains an open set of geometric Lorenz attractors.  Moreover, in \cite{DOC9}, for certain types of Gibbs-Markov flows, including intermittent solenoidal flows and various Lorentz gas
models including the infinite horizon Lorentz gas polynomial, DOC of the correlation function
\begin{equation*}
\rho_{A,B}(t) \leq C(A,B) t^{-(\beta -1)} \quad \forall t>0,
\end{equation*}
with $\beta>1$, is proven. For such flows, the DOC is summable, provided that $\beta>2$. 
\subsection{Basic facts for stochastic systems} 
\label{sec: basic results for stochastic systems}

Let us now come back to the coupled systems (\ref{eq: fast slow ODE a and y}). In the following we use 
the notation from Section \ref{sec main setup}. If we further consider the Banach space 
$X:= (C_0(\mathbb{R}^d\times \mathbb{T}^m) , \| \cdot \|_\infty)$ of continuous functions, 
which vanish as $\|x\|_2 \to \infty$ for points $(x,y) \in \mathbb{R}^d\times\mathbb{T}^m$; with the usual 
supremum norm, it can be shown (cf.~Lemma \ref{lem: suitable banach space L} in the Appendix) that 
the closure $\bar{\mathcal{L}_1}^\delta$ generates an ergodic strongly continuous contraction 
semigroup $\{ S^\delta(t) \}_{t\geq 0}$ on $X$ (in the sense of Definition~\ref{defin: ergodic contraction 
semigroup}) and $\bar{\mathcal{L}}^{\epsilon,\delta}$ generates 
a strongly continuous contraction semigroup on $X$ denoted by $\{T^{\epsilon,\delta}(t) \}_{t\geq 0}$. 
Let $\mathcal{P}^\delta$ be the projection corresponding to the ergodic semigroup produced by 
$\mathcal{L}_1^\delta$, acting on $X$ explicitly via
\begin{equation}
\label{eq: Projection Pd}
\mathcal{P}^\delta f (x,y) := \int_{\mathbb{T}^m} f(x,y) \rho_\infty^\delta(y;x) ~\rmd y, f\in X.
\end{equation}
The perturbation expansion 
\begin{equation} 
\label{eq. perturbation expansion for backward kolmogorov}
u^{\epsilon,\delta} = u_0^\delta + \epsilon u_1^\delta + \epsilon^2 u_2^\delta + \cdots,
\end{equation}
leads, as shown for instance in \cite{PavliotisStuart} and \cite{35} (cf.~Section \ref{sec: perturbation anal for wcsys app} 
in the Appendix for completeness) to the following equation for the leading order $u_0$:
\begin{align}
\label{eq: apr eq for barrho fp}
\frac{\rmd u_0^\delta}{\rmd t}(x,t) &=  \int_{\mathbb{T}^m}\rho_\infty^\delta(y;x)\mathcal{L}_3 
u_0^\delta(x,t) ~\rmd y - \int_{\mathbb{T}^m}\rho_\infty^\delta(y;x)\mathcal{L}_2\Big(\mathcal{L}_1^\delta\Big)^{-1}
\mathcal{L}_2 u_0^\delta(x,t)~\rmd y \nonumber\\
&= \Big(\mathcal{P}^\delta\mathcal{L}_3 \mathcal{P}^\delta -
\mathcal{P}^\delta\mathcal{L}_2\Big(\mathcal{L}_1^\delta\Big)^{-1}\mathcal{L}_2 \mathcal{P}^\delta\Big)
u_0^\delta(x,t) \nonumber \\
&=: \mathcal{L}^{0,\delta} u_0^\delta.
\end{align}
The operator $\mathcal{L}^{0,\delta}$ acting on the right side of equation (\ref{eq: apr eq for barrho fp}) 
can be more precisely evaluated, using the function $\Phi^\delta$ defined in (\ref{eq: cell problem}). 
As shown in \cite{PavliotisStuart}, equation (\ref{eq: apr eq for barrho fp}) can be rewritten as
\begin{align} \label{eq: apr eq for barrho fp 2 }
\frac{du_0^\delta}{\rmd t} &= F^\delta(x) \cdot \nabla_{x} u_0^\delta + \frac{1}{2}  A^\delta(x)A^\delta(x)^\top  
: \nabla_x \nabla_{x} u_0^\delta \nonumber \\
&= \mathcal{L}^{0,\delta}u_0^\delta
\end{align}
where  the drift and diffusion coefficients are given by (\ref{eq: F and A 1}) and 
$\mathcal{L}^{0,\delta}u_0^\delta$ is given by  (\ref{eq L^0,delta}).

The major disadvantage of the formulas (\ref{eq: F and A 1}) is that they use the solution 
$\Phi^\delta$ of the cell problem  which is not well-posed for $\mathcal{L}_1^0$ or in other 
words, in the case that we work with purely deterministic systems. However, there are also some 
alternative expressions, which are more suitable for deterministic systems and are already 
proven in \cite{PavliotisStuart}, but which are for convenience included in the following 
Lemma~\ref{lem: alternative representations of the expressions}, since we require 
some minor changes. The alternative expressions use the solution operator $\phi_x^{\delta,t}(y)$ 
of the fast dynamics given by (\ref{eq: phi_xi}). Recall that $\mathbb{E}$ denotes the expectation 
with respect to Wiener measure $\nu$ on $\Lambda$ and further let $\mathbb{E}^{\mu_x \otimes \nu}$ 
denote the expectation with respect to the product measure 
$\mu_x^\delta \otimes \nu$,  where $\mu_x^\delta$ is the ergodic measure defined 
in (\ref{eq: measure mu_x for stochastic sy}).

\begin{lem} \textbf{(Differentiability of the solution operator with respect to $x$)} 
\label{lem: differentiability of the solution oper wrt to x} \\
There exists a version of the stochastic process $\phi_x^{\delta,t}$ such that for almost all (a.a.) 
$\omega \in \Lambda$ the function $x \to \phi_x^{\delta,t}$ is continuously differentiable for 
every $t$ and the differential $\nabla_{x}\phi_x^{\delta,t}(y) \in \mathbb{R}^{m\times d}$ satisfies 
the linear ODE
	\begin{equation}
	\frac{\rmd}{\rmd t} \nabla_{x}\phi_x^{\delta,t}(y) = \nabla_x g(x,\phi_x^{\delta,t}(y)) 
	+ \nabla_y g(x,\phi_x^{\delta,t}(y))\nabla_{x}\phi_x^{\delta,t}(y), \quad \nabla_x\phi_x^{\delta,0}(y) = 0.
	\end{equation}
\end{lem}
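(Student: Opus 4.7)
The strategy is to reduce the SDE to a pathwise random ODE by exploiting the additive structure of the noise, and then apply the classical $C^1$-dependence-on-parameters theorem of ODE theory. Fix a continuous version of the Brownian motion $V$ on $\Lambda$, and let $\Lambda_0\subset \Lambda$ denote the full-measure set on which $t \mapsto V_t(\omega)$ is continuous. For $\omega \in \Lambda_0$, define $\psi_x(t,\omega) := \phi_x^{\delta,t}(y)(\omega) - \sqrt{\delta}\, V_t(\omega)$. Then $\psi_x$ solves the non-autonomous random ODE
\begin{equation*}
\dot{\psi}_x(t) = g\bigl(x,\psi_x(t) + \sqrt{\delta}\, V_t(\omega)\bigr),\qquad \psi_x(0) = y,
\end{equation*}
whose right-hand side is continuous in $t$ (by continuity of $V_\cdot(\omega)$) and $C^2$ in $(x,\psi)$ with globally bounded derivatives, by Assumption (A2).

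Next, I would invoke the standard differentiable-dependence-on-parameters theorem for ODEs (e.g.\ Hartman, Chapter V, or Hale, Chapter 1): for each $\omega\in\Lambda_0$, the map $x \mapsto \psi_x(\cdot,\omega)$ is $C^1$ into $C^0([0,T];\mathbb{T}^m)$ for every finite $T$, and its Jacobian $J^x_t := \nabla_x \psi_x(t,\omega)$ satisfies the variational ODE
\begin{equation*}
\dot{J}^x_t = \nabla_x g\bigl(x,\psi_x(t) + \sqrt{\delta}V_t(\omega)\bigr) + \nabla_y g\bigl(x,\psi_x(t) + \sqrt{\delta}V_t(\omega)\bigr)\, J^x_t,\qquad J^x_0 = 0.
\end{equation*}
Since $\phi_x^{\delta,t}(y)=\psi_x(t)+\sqrt{\delta}V_t$ and the added term is $x$-independent, $\nabla_x \phi_x^{\delta,t}(y) = J^x_t$, and substituting $\phi_x^{\delta,t}(y)$ back into the variational equation yields exactly the claimed linear ODE. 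Existence and uniqueness of $J^x_t$ on any $[0,T]$ follow from Gronwall applied to the linear equation with bounded coefficients.

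Finally, for the modification/measurability issue: the stochastic process $\omega \mapsto \phi_x^{\delta,t}(y)(\omega)$ built from the fixed continuous version of $V$ is the distinguished version we work with. Measurability of $\nabla_x\phi_x^{\delta,t}(y)$ in $\omega$ follows because it is the a.s.\ pointwise limit of the measurable finite differences $h^{-1}(\phi_{x+he_i}^{\delta,t}(y) - \phi_x^{\delta,t}(y))$ as $h\to 0$, and joint measurability in $(x,t,\omega)$ follows from continuity in $(x,t)$ on $\Lambda_0$. The only mild subtlety is thus the choice of this continuous version at the outset; no genuine obstacle arises because additivity of the noise removes the need for any Malliavin-type or Kunita-flow argument and reduces the problem to deterministic ODE regularity.
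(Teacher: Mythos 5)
Your proof is correct, but it takes a genuinely different route from the paper. The paper disposes of this lemma in one line by invoking a general theorem on parameter-differentiability of stochastic flows for SDEs driven by semimartingales (\cite[Theorem 4.2]{52}), writing $\phi_x(t) = v^x(t) + \int_0^t g(x,\phi_x(s))\,\rmd Z_s$ with $\rmd Z_s = \rmd t$ and checking that the boundedness of the derivatives of $g$ up to order two satisfies the hypotheses. You instead exploit the fact that the noise is \emph{additive}: subtracting $\sqrt{\delta}\,V_t(\omega)$ turns the SDE into a pathwise non-autonomous random ODE whose right-hand side is continuous in $t$ and $C^2$ in $(x,\psi)$, so the classical deterministic theorem on $C^1$-dependence on parameters applies $\omega$-by-$\omega$, and the variational equation you obtain is exactly the claimed one (the Jacobians agree since the subtracted term is $x$-independent, and $J_0^x=0$ because the initial condition $y$ does not depend on $x$). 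Both arguments are sound. What your approach buys is self-containedness and elementarity: no stochastic-flow machinery is needed, the differentiable version is constructed explicitly on the full-measure set where $V_\cdot(\omega)$ is continuous, and the measurability of $\nabla_x\phi_x^{\delta,t}(y)$ comes for free as a limit of difference quotients. What it gives up is generality: the reduction to a random ODE collapses the moment the noise becomes multiplicative, whereas the paper's citation covers that case without modification. One minor point worth making explicit if you write this up: you should note that the pathwise solution you construct agrees a.s.\ with the strong solution of \eqref{eq: phi_xi} for each fixed $x$ (by pathwise uniqueness, which holds since $g$ is Lipschitz), so that it really is a version of the process in the sense the lemma requires.
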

\begin{proof}
This follows from \cite[Theorem 4.2]{52}, where we set $v^x(t):= y +\sigma_2\frac{\rmd V}{\rmd t}$, 
$u:= x$ and $\rmd Z_s:= \rmd t$ such that $\phi_x(t) = v^x(t) + 
\int_0^tg(x,\phi_x(s))~\rmd Z_s$, and observe that all assumptions are satisfied since $g$ has 
bounded derivatives up to order two.
\end{proof}

\begin{lem} 
\textbf{(Alternative representations of the coefficients of the limiting SDE)} 
	\label{lem: alternative representations of the expressions} \\
Fix a $\delta > 0$. We have the following alternative formulas for the vector fields 
$F_0^\delta(x), F_1^\delta(x)$ and the diffusion matrix $A_0^\delta(x)$ from equation (\ref{eq: F and A 1}): 
For all $y\in \mathbb{T}^m$ and for a.a.~$\omega \in \Lambda$ we have
	\begin{equation}
	F_1^\delta(x) = \lim_{T\to \infty} \frac{1}{T} \int_0^{T} a \Big( x,\phi_x^{\delta,s}(y)(\omega) 
	\Big)~\rmd s\label{eq: altern repres 3}
	\end{equation}
	and 
	\begin{equation}\label{eq: altern repres 4}
	A_0^\delta(x) = 2 \int_0^{\infty}  \lim_{T \to \infty} \frac{1}{T} \int_0^T  
	b(x, \phi_x^{\delta,s}(y)(\omega))\otimes\mathbb{E}[ b \Big(x,\phi_x^{\delta,t}
	(\phi_x^{\delta,s}(y)(\omega)) \Big)] ~\rmd s  ~\rmd t, 
	\end{equation}
	and if there exists a constant $D(t)$ such that 
	\begin{equation} \label{eq: lemma alterntavie expressions welldefin cond}
	\nabla_x \Big( \mathbb{E}[b(x,\phi_x^{\delta,t}(y))]  \Big) \leq D(t), 
	\text{ for all $x \in \mathbb{R}^d,$ \quad} \int_0^\infty D(t) ~\rmd t <\infty,
	\end{equation}
	then, it holds also that
	\begin{align}\label{eq: altern repres 5}
	F_0^{\delta}(x)  =  \int_0^\infty \Bigg(\lim_{T \to \infty} \frac{1}{T} 
	\int_0^T \mathbb{E} \Big[ &\nabla_x b\Big(x,\phi_x^{\delta,t}(\phi_x^s(y)(\omega))\Big) \nonumber \\
	&+ \nabla_yb\Big(x,\phi_x^{\delta,t}(\phi_x^{\delta,s}(y)(\omega)) \Big)\nabla_x
	\phi_x^{\delta,t}(\phi_x^{\delta,s}(y)(\omega)) \Big]  b\Big(x,\phi_x^{\delta,s}(y)(\omega) \Big)~ \rmd s \Bigg) ~\rmd t.
	\end{align}	
\end{lem}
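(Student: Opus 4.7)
The strategy is to derive all three formulas from a single semigroup representation of the cell-problem solution, combined with the Birkhoff ergodic theorem for the uniformly elliptic diffusion $\phi_x^{\delta,t}$. First I would recall that, since $\mathcal{L}_1^\delta$ is uniformly elliptic on $\mathbb{T}^m$ for $\delta>0$, the transition semigroup $S^\delta(t)f(y):=\mathbb{E}[f(\phi_x^{\delta,t}(y))]$ has a spectral gap on the codimension-one subspace of $\mu_x^\delta$-centered functions, which combined with the centering condition~(\ref{eq: centering condition in theorem}) yields the Hille--Yosida representation
\begin{equation*}
\Phi^\delta(y;x)=-\bigl(\mathcal{L}_1^\delta\bigr)^{-1}b(x,\cdot)(y)=\int_0^\infty \mathbb{E}\bigl[b(x,\phi_x^{\delta,t}(y))\bigr]~\rmd t,
\end{equation*}
with an integrand decaying exponentially in $t$. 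I would also note that uniform ellipticity makes the law of $\phi_x^{\delta,t}(y)$ absolutely continuous with strictly positive density for every $t>0$, so the Birkhoff ergodic theorem for $(\phi_x^{\delta,t},\mu_x^\delta)$ holds pointwise for \emph{every} initial $y\in\mathbb{T}^m$ (not only for $\mu_x^\delta$-a.e.~$y$), for almost every $\omega\in\Lambda$.

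Formula~(\ref{eq: altern repres 3}) is then immediate from this pointwise ergodic theorem applied to the continuous observable $z\mapsto a(x,z)$, since by definition $F_1^\delta(x)=\int_{\mathbb{T}^m} a(x,y)\rho_\infty^\delta(y;x)\,\rmd y$. For~(\ref{eq: altern repres 4}), I would substitute the semigroup representation of $\Phi^\delta$ into $A_0^\delta(x)=2\int_{\mathbb{T}^m} b(x,y)\otimes \Phi^\delta(y;x)\rho_\infty^\delta(y;x)\,\rmd y$, use Fubini (legitimate because of the exponential decay in $t$ and boundedness of $b$) to pull the $t$-integral outside, and apply the pointwise ergodic theorem at each fixed $t$ to the continuous observable $y\mapsto b(x,y)\otimes\mathbb{E}[b(x,\phi_x^{\delta,t}(y))]$; this produces exactly the time-averaged integrand on the right-hand side of~(\ref{eq: altern repres 4}).

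For formula~(\ref{eq: altern repres 5}), I would start from $F_0^\delta(x)=\int_{\mathbb{T}^m}\nabla_x\Phi^\delta(y;x)\,b(x,y)\rho_\infty^\delta(y;x)\,\rmd y$. The hypothesis~(\ref{eq: lemma alterntavie expressions welldefin cond}) furnishes an integrable majorant that legitimizes differentiation under the $t$-integral, so that $\nabla_x\Phi^\delta(y;x)=\int_0^\infty \nabla_x\mathbb{E}[b(x,\phi_x^{\delta,t}(y))]~\rmd t$. The inner derivative is computed via Lemma~\ref{lem: differentiability of the solution oper wrt to x} and the chain rule to yield the $\nabla_x b$ and $(\nabla_y b)(\nabla_x\phi_x^{\delta,t})$ terms inside the expectation, and a final application of the pointwise ergodic theorem, exactly as in the $A_0^\delta$ step, produces~(\ref{eq: altern repres 5}). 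The main obstacle I anticipate is making the two background ingredients rigorous: establishing the \emph{pointwise-in-$y$} form of Birkhoff's theorem (rather than $\mu_x^\delta$-a.e.) and justifying the interchange of $\nabla_x$ with the improper $t$-integral under only the mild bound~(\ref{eq: lemma alterntavie expressions welldefin cond}). Both rely crucially on the uniform ellipticity of $\mathcal{L}_1^\delta$ for $\delta>0$---precisely the feature that degenerates as $\delta\to 0$, which is why having an expression free of $\Phi^\delta$ is indispensable when we subsequently pass to the deterministic limit.
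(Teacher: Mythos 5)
Your proposal is correct and follows essentially the same route as the paper's proof: the semigroup representation $\Phi^\delta(y;x)=\int_0^\infty \mathbb{E}[b(x,\phi_x^{\delta,t}(y))]\,\rmd t$, Fubini to pull out the $t$-integral, the pointwise (in $y$) ergodic theorem for the elliptic diffusion applied at each fixed $t$, and the hypothesis~(\ref{eq: lemma alterntavie expressions welldefin cond}) to justify interchanging $\nabla_x$ with the improper integral before applying the chain rule. The only difference is that you sketch why the ergodic theorem holds for every initial $y$, whereas the paper simply cites the corresponding results from Pavliotis--Stuart.
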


\begin{proof}
We follow the proof given in \cite[Chapter 11]{PavliotisStuart}. We first calculate
	\begin{align*}
	\Phi^\delta(y;x) &= \int_0^\infty (\txte^{\mathcal{L}_1^\delta t}b)(x,y) ~\rmd t && \text{(by  \cite[Result 11.8]{PavliotisStuart})
	} \\
	&= \int_0^\infty \mathbb{E} [ b(x,\phi_x^{\delta,t}(y)) ] ~\rmd t &&\text{(by \cite[Theorem 6.6]{PavliotisStuart})}
	\end{align*}
	Thus, using Fubini's theorem, 
	\begin{align*}
	A_0^\delta(x) &= 2\int_{\mathbb{T}^m} b(x,y) \otimes \Phi^\delta(y;x) \rho_\infty(y;x)~\rmd y\\
	&= 2 \int_{\mathbb{T}^m} b(x,y) \otimes \int_0^\infty \mathbb{E} [ b(x,\phi_x^{\delta,t}(y)) ] 
	~\rmd t ~ \rho_\infty^\delta(y;x)~\rmd y \\
	&= 2\int_0^\infty\int_{\mathbb{T}^m} b(x,y) \otimes \int_\Lambda  b(x,\phi_x^{\delta,t}(y)(\omega)) 
	~\rmd\nu(\omega) \rho^\delta_\infty(y;x) ~\rmd y ~\rmd t\\
	&= 2 \int_0^{\infty} \mathbb{E}^{\mu_x^\delta\otimes \nu}[b(x,y) \otimes b(x,\phi_x^{\delta,t}(y))] ~\rmd t.
	\end{align*}
Setting $h(x,y;t):= b(x,y) \otimes\mathbb{E} [b(x,\phi_x^{\delta,t}(y))]$ we get from Theorem~\cite[Theorem 6.16]{PavliotisStuart} 
that for a.a.~$\omega \in \Lambda$ we have
	\begin{equation*}
	\int_{\mathbb{T}^m}  h(x,y;t) \rho_\infty^\delta(y;x) ~\rmd y = \lim_{T \to \infty} \frac{1}{T} 
	\int_0^T h(x,\phi_x^{\delta,s}(y)(\omega))\rmd s 
	\end{equation*}
	and by inserting into the expression for $A_0^\delta(x)$ we get that for a.a. $\omega \in\Lambda$ 
	equation	(\ref{eq: altern repres 4}) is satisfied.	Analogously
	(noticing that condition (\ref{eq: lemma alterntavie expressions welldefin cond}) allows us to 
	interchange the 
	order of integration and the $\nabla_x$ operator),
	\begin{align*}
	F_0^\delta(x) &= \int_{\mathbb{T}^m} \rho_\infty^\delta(y;x)\nabla_x \Phi^\delta(y;x)b(x,y)~\rmd y\\
	&= \int_{\mathbb{T}^m} \rho_\infty^\delta(y;x) \nabla_x [\int_0^\infty \mathbb{E} [ b(x,\phi_x^{\delta,t}(y))] 
	~\rmd t] b(x,y) ~\rmd y\\
	&= \int_{\mathbb{T}^m} \rho_\infty^\delta(y;x)  [\int_0^\infty \int_{\Lambda}\nabla_x 
	\Big(b(x,\phi_x^{\delta,t}(y)(\omega)) \Big) ~\rmd\nu(\omega) ~\rmd t] b(x,y) ~\rmd y \\
	&= \int_0^\infty \int_{\mathbb{T}^m} \rho_\infty^\delta(y;x)\int_{\Lambda}\nabla_x 
	\Big(b(x,\phi_x^{\delta,t}(y)(\omega)) \Big) ~\rmd\nu(\omega)b(x,y) ~\rmd y ~\rmd t \\
	&= \int_0^\infty \mathbb{E}^{\mu_x^\delta\otimes \nu}[\nabla_x \Big( b(x,\phi_x^{\delta,t}(y))\Big) 
	b(x,y) ]~\rmd t.
	\end{align*}
	By the chain rule we have that
	\begin{equation*}
	\nabla_x \Big(b(x,\phi_x^{\delta,t}(y)) \Big) = 
	\nabla_xb(x,\phi_x^{\delta,t}(y)) + \nabla_yb(x,\phi_x^{\delta,t}(y))\nabla_x\phi_x^{\delta,t}(y).
	\end{equation*}
	Thus, setting 
	\begin{equation*}
	h(x,y;t) :=  \mathbb{E} [\nabla_xb(x,\phi_x^{\delta,t}(y)) + \nabla_yb(x,\phi_x^{\delta,t}(y))
	\nabla_x\phi_x^{\delta,t}(y) ] b(x,y),
	\end{equation*}
	we get equation~\eqref{eq: altern repres 5} by \cite[Theorem 6.16]{PavliotisStuart}. Now the expression for $F_1^\delta$ 
	follows directly from \cite[Theorem 6.16]{PavliotisStuart}.
\end{proof}

Finally, let $(T^{0,\delta}(t))_{t \geq 0}$ denote the corresponding semigroup of the generator 
$\mathcal{L}^{0,\delta}$ on $C_0(\mathbb{R}^d)$. The basic important fact that we use in the 
following is that the semigroup $(T^{\epsilon,\delta}(t))_{t \geq 0}$ converges towards $(T^{0,\delta}(t))_{t \geq 0}$ 
as $\epsilon \to 0$, as stated in Theorem \ref{thm: application of kurtz}, which has similarly been proven by 
Kurtz~\cite{47}, but is formulated and shown in the Appendix for the reader's convenience. We are now ready to 
state the main result of this section.

\subsection{Main result for coupled systems}
\label{sec: main result}

In the following, let $\{T^{\epsilon,0}(t)\}_{t \geq 0}$ denote the semigroup on $X$ generated by 
$\mathcal{L}^{\epsilon,0}$, which is defined as in (\ref{eq: bachward kolm eq with eps}) with $\delta = 0$. 
Similarly we consider the generator $\bar{\mathcal{L}}^{0,0}$ for the strongly continuous semigroup 
$T^{0,0}(t)$ on $C_0(\mathbb{R}^d)$.

\begin{thm}
\label{thm: from Led to L00}
Under the assumptions (A1)-(A4), it follows that for every $f \in C_0(\mathbb{R}^d)$ and every 
sequence $\{\epsilon_k\}_{k\geq 0}$ with $\epsilon_k \to 0$ for $k \to \infty$, there exists a 
subsequence $ \{ \epsilon_{k_m} \}_{m \geq 0}$ such that for any finite time $\hat{T}>0$
\begin{equation}\label{eq: led to loo 8} 
\lim_{m \to \infty} \sup_{0 \leq t \leq \hat{T}} \parallel T^{\epsilon_{k_m},0}(t)f  - 
T^{0,0}(t)f \parallel_\infty = 0.
\end{equation}
\end{thm}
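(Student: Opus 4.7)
The plan is to introduce the stochastically regularized semigroup $T^{\epsilon,\delta}$ for $\delta>0$ as an intermediate object and split via the triangle inequality
\[
T^{\epsilon,0}(t)f - T^{0,0}(t)f \;=\; \bigl(T^{\epsilon,0}(t)f - T^{\epsilon,\delta}(t)f\bigr) \;+\; \bigl(T^{\epsilon,\delta}(t)f - T^{0,\delta}(t)f\bigr) \;+\; \bigl(T^{0,\delta}(t)f - T^{0,0}(t)f\bigr),
\]
and then to extract a subsequence along which all three pieces vanish uniformly on $[0,\hat T]$. By density of $C^2_\txtc(\mathbb{R}^d)$ in $C_0(\mathbb{R}^d)$ together with the uniform contractivity $\|T^{\epsilon,0}(t)\|\le 1$ and $\|T^{0,0}(t)\|\le 1$, it suffices to treat $f\in C^2_\txtc(\mathbb{R}^d)$.

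For the third piece I would invoke assumption (A4) and the Trotter--Kato approximation theorem: since $\mathcal{L}^{0,\delta}f\to\mathcal{L}^{0,0}f$ uniformly on the core $C^2_\txtc(\mathbb{R}^d)$, and $\mathcal{L}^{0,0}$ generates a strongly continuous contraction semigroup, Trotter--Kato yields $\sup_{t\in[0,\hat T]}\|T^{0,\delta}(t)f - T^{0,0}(t)f\|_\infty \to 0$ as $\delta\to 0$. For the middle piece I would use Theorem~\ref{thm: application of kurtz} (the Kurtz-type semigroup convergence recorded in the appendix) which gives, for every fixed $\delta>0$, $\sup_{t\in[0,\hat T]}\|T^{\epsilon,\delta}(t)f - T^{0,\delta}(t)f\|_\infty\to 0$ as $\epsilon\to 0$. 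For the first piece I would use a classical zero-noise stability argument for the regularized fast-slow SDE with fixed $\epsilon$: the drift $\epsilon^{-2}g$ is smooth and bounded and the noise amplitude $\epsilon^{-1}\sqrt{\delta}$ vanishes as $\delta\to 0$, so standard Gronwall-type estimates give almost-sure uniform convergence of $(x_{\epsilon,\delta},y_{\epsilon,\delta})$ to the ODE solution on $[0,\hat T]$, whence by bounded convergence $\sup_{t\in[0,\hat T]}\|T^{\epsilon,\delta}(t)f - T^{\epsilon,0}(t)f\|_\infty \to 0$ as $\delta\to 0$ for every fixed $\epsilon>0$.

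The main obstacle is that the three limits are not uniform in each other: the Kurtz convergence is for fixed $\delta$ (with a rate depending on $\delta$), while the zero-noise convergence for the first piece is for fixed $\epsilon$ (with rate depending on $\epsilon$, heuristically requiring $\delta\ll \epsilon^2$). To combine them I would organize a coupled diagonal choice of $(\delta_m,k_m)$: given the prescribed sequence $\{\epsilon_k\}$, for each $m\in\mathbb{N}$ first pick $\delta_m\downarrow 0$ so small that the third piece is $<1/(3m)$; next, holding this $\delta_m$ fixed, pick $k_m>k_{m-1}$ large enough that Theorem~\ref{thm: application of kurtz} forces the middle piece to be $<1/(3m)$; finally, if the first piece (with $\epsilon=\epsilon_{k_m}$, $\delta=\delta_m$) is not yet $<1/(3m)$, further shrink $\delta_m$ (noting this cannot worsen the third bound and only requires re-enlarging $k_m$ in the middle bound, which is consistent with producing a subsequence). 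Carrying out this iteration carefully so that $\delta_m\to 0$ and $k_m\to\infty$ simultaneously is the technical heart of the argument.

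In summary, Theorem~\ref{thm: from Led to L00} is obtained by the intermediate-regularization triangle inequality, with the three resulting pieces handled by Trotter--Kato applied to (A4), the Kurtz-type result of Theorem~\ref{thm: application of kurtz}, and a classical SDE stability argument, respectively; the non-trivial step is the coupled subsequence extraction that reconciles the incompatible uniformity scales of the $\epsilon\to 0$ and $\delta\to 0$ limits, which is precisely why the statement asserts convergence only along a subsequence rather than along the full sequence.
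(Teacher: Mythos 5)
Your proposal is correct and follows essentially the same route as the paper's proof: the identical three-term triangle-inequality splitting through the intermediate semigroup $T^{\epsilon,\delta}$, the Trotter--Kato theorem applied to (A4) for the piece $\|T^{0,\delta}(t)f-T^{0,0}(t)f\|_\infty$, Theorem~\ref{thm: application of kurtz} for the middle piece, and the same diagonal selection of $(\delta_k,\epsilon_{l(k)})$ to extract the subsequence. The only (inessential) divergence is in the first piece: rather than a pathwise Gronwall/zero-noise stability estimate, the paper simply notes that $\mathcal{L}^{\epsilon,\delta}f-\mathcal{L}^{\epsilon,0}f=\tfrac{\delta}{2\epsilon^{2}}\, I:\nabla_y\nabla_y f\to 0$ uniformly as $\delta\to 0$ for fixed $\epsilon$ and invokes Trotter--Kato a second time.
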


\begin{proof}
Fix $f \in C_0(\mathbb{R}^d)$. We have by the triangle inequality
\begin{equation}\label{eq: ued - u00}
	\begin{split}
	\parallel T^{\epsilon, 0}(t) f - T^{0,0}(t) f  \parallel_\infty \leq \parallel 
	&T^{\epsilon, 0}(t)f - T^{\epsilon,\delta}(t)f \parallel_\infty + \parallel 
	T^{\epsilon,\delta}(t)f  - T^{0,\delta}(t)f \parallel_\infty \\  & \text{\quad}+ 
	\parallel T^{0,\delta}(t)f - T^{0,0}(t)f \parallel_\infty.
	\end{split}
	\end{equation}	
Further, due to the definition of the operator $\mathcal{L}_1^\delta$ we see immediately that 
for all $f \in \mathcal{D}(\mathcal{L}^{\epsilon,\delta})$  
	\begin{equation}\label{eq: led to loo 9} 
	\lim_{\delta \to 0}\mathcal{L}^{\epsilon, \delta} f = \mathcal{L}^{\epsilon,0} f \text{\quad uniformly}.
	\end{equation}
	Due to equations (\ref{eq: led to loo 9}) and  (\ref{eq: led to loo 7}) and by the Trotter-Kato Theorem 
	(see for example \cite[Theorem 4.8]{42}) we observe that for any fixed $\epsilon >0$ the first and the last 
	term on the right side of equation (\ref{eq: ued - u00}) can be made arbitrary small as $\delta \to 0$. 
	The second difference for any fixed $\delta > 0$ can be also made arbitrary small as $\epsilon \to 0$ due 
	to Theorem \ref{thm: application of kurtz}. To be more precise, let $ \{ \epsilon_k \}_{k \geq 0} $ be a 
	sequence with $\epsilon_k \to 0$ for $k \to \infty$. Then we can find for every $k \in \mathbb{N}$ a $\delta_k > 0$ 
	so that 
	\begin{equation*}
	\parallel T^{\epsilon_k, 0}(t)f - T^{\epsilon_k,\delta_k}(t)f \parallel_\infty + 
	\parallel T^{0,\delta_k}(t)f - T^{0,0}(t)f \parallel_\infty <  \frac{2\epsilon_k}{3}.
	\end{equation*}
	Moreover, for any $k \in \mathbb{N}$ we can fix an $l(k) \in \mathbb{N}$ so that 
	\begin{equation*}
	\parallel T^{\epsilon_{l(k)},\delta_k}(t)f - T^{0,\delta_k}(t)f \parallel_\infty < \frac{\epsilon_k}{3}.
	\end{equation*}
	In this way we get a subsequence $\{ \epsilon_{l(k)} \}_{k \geq 0 }$ for which 
	\begin{equation*}
	\parallel T^{\epsilon_{l(k)}, 0}(t) f - T^{0,0}(t) f  \parallel_\infty \leq \epsilon_k
	\end{equation*}
	holds. The claim now follows by taking the limit $k\to \infty$.
\end{proof} 

\begin{remark} \label{rem: after theorem from ledd to leoo}
 A sufficient condition for the key assumption (A4) to hold is that 
	\begin{equation}
	F_0^\delta \to F_0^0, \text{\quad}  F_1^\delta \to F_1^0, \text{\quad } A_0^\delta \to A_0^0 
	\text{\quad uniformly in }x,
	\end{equation}
provided that the expressions $F_0^0, F_1^0, A_0^0$ are well-defined, which requires sufficiently 
fast decay of correlations. Furthermore, Theorem \ref{thm B} gives us precise conditions under, 
which (A4) is satisfied. In the case that $g= g(y)$ is independent of $x$, the posed 
assumptions are relatively mild. 
\end{remark}

Next, recall that for $\epsilon > 0$ we denote by $(X^\epsilon(t;\xi ,\eta), Y^\epsilon(t;\xi,\eta))$ 
the solution of the ODE~(\ref{eq: fast slow ODE a and y}).	
	
\begin{cor} 
\label{cor: after thm Led to Loo}
Assume that (A1)-(A4) hold, that $\mathcal{L}^{0,0}$ can be written as in (\ref{eq: Loo in explicit form}) and that SDE (\ref{Limiting SDE in corollary after Led Loo}) has the solution $X(t)$. Then for every 
$f \in C_0(\mathbb{R}^d)$ and every sequence $\{\epsilon_k\}_{k\geq 0}$  with $\epsilon_k \to 0$ 
for $k \to \infty$ there exists a subsequence $ \{ \epsilon_{k_m} \}_{m \geq 0}$ such that for 
$m \to \infty$, 
	\begin{equation*}
	f(X^{\epsilon_{k_m}}(t; \xi , \eta)) \to \mathbb{E}[f(X(t;\xi))], 
	\text{\quad uniformly in $\xi \in \mathbb{R}^d$, $\eta \in \Omega \subset \mathbb{T}^m $ 
	and $t\in [0,\hat{T}]$},   
	\end{equation*} 
where the expectation $\mathbb{E}$ is taken with respect to the Wiener measure (defined on $\Lambda$) 
of the Brownian motion $W$. It follows especially that for any Borel probability measure $\mu$ on 
$\mathbb{T}^m$ we have
\begin{equation*}
	\mathbb{E}^{\mu} [f(X^{\epsilon_{k_m}}(t))]\to \mathbb{E}[f(X(t))]\text{\quad uniformly in $t \in [0,\hat{T}]$}
\end{equation*}
\end{cor}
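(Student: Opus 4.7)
The plan is to deduce the corollary as a direct reinterpretation of Theorem \ref{thm: from Led to L00}, recasting the semigroup convergence in $X = C_0(\mathbb{R}^d \times \mathbb{T}^m)$ as a statement about the trajectories of the ODE and the limiting SDE. Two purely formal identifications are needed, after which the statement is almost immediate.

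First I would observe that, since the ODE \eqref{eq: fast slow ODE a and y} is deterministic, the semigroup $T^{\epsilon,0}(t)$ on $X$ acts by composition with the flow: for any $F \in X$,
\begin{equation*}
T^{\epsilon,0}(t) F(\xi,\eta) = F\bigl(X^\epsilon(t;\xi,\eta),\, Y^\epsilon(t;\xi,\eta)\bigr).
\end{equation*}
Given $f \in C_0(\mathbb{R}^d)$, its trivial lift $\tilde{f}(x,y) := f(x)$ belongs to $X$ thanks to compactness of $\mathbb{T}^m$, and one has $T^{\epsilon,0}(t)\tilde{f}(\xi,\eta) = f(X^\epsilon(t;\xi,\eta))$. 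Throughout, I would identify $f$ with $\tilde{f}$ so that both $T^{\epsilon,0}(t)f$ and $T^{0,0}(t)f$ (the latter extended constantly in $y$) can be compared within $X$ under the supremum norm of Theorem \ref{thm: from Led to L00}.

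Next, using that $\mathcal{L}^{0,0}$ is the second-order elliptic operator in \eqref{eq: Loo in explicit form} with smooth coefficients, and that the SDE \eqref{Limiting SDE in corollary after Led Loo} is uniquely solvable by the hypothesis of the corollary together with (A1)--(A2), the Feynman--Kac representation gives
\begin{equation*}
T^{0,0}(t)f(\xi) \,=\, \mathbb{E}\bigl[f(X(t;\xi))\bigr].
\end{equation*}
Inserting these two identifications into the conclusion of Theorem \ref{thm: from Led to L00} at once yields, along the subsequence $\{\epsilon_{k_m}\}$,
\begin{equation*}
\sup_{0 \leq t \leq \hat T}\; \sup_{(\xi,\eta) \in \mathbb{R}^d \times \mathbb{T}^m} \bigl|f(X^{\epsilon_{k_m}}(t;\xi,\eta)) - \mathbb{E}[f(X(t;\xi))]\bigr| \longrightarrow 0,
\end{equation*}
which is the first claim (restricted to $\eta \in \Omega \subset \mathbb{T}^m$).

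For the second claim, I would integrate against the Borel probability measure $\mu$ on $\mathbb{T}^m$. The triangle inequality gives
\begin{equation*}
\bigl|\mathbb{E}^\mu[f(X^{\epsilon_{k_m}}(t))] - \mathbb{E}[f(X(t;\xi))]\bigr| \,\leq\, \int_{\mathbb{T}^m} \bigl|f(X^{\epsilon_{k_m}}(t;\xi,\eta)) - \mathbb{E}[f(X(t;\xi))]\bigr|\, \rmd\mu(\eta),
\end{equation*}
and the integrand is bounded, uniformly in $\eta$ and $t \in [0,\hat{T}]$, by the quantity already controlled in the first step; the conclusion follows. The only mildly subtle point in the whole argument is the embedding step: one must verify that $\tilde{f}$ belongs to $X$ and that the sup-norm in Theorem \ref{thm: from Led to L00} is to be read on $X$ through this embedding. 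Both are immediate from compactness of $\mathbb{T}^m$, so no substantive obstacle remains; the corollary is effectively a restatement of Theorem \ref{thm: from Led to L00} in trajectory form.
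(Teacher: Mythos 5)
Your proposal is correct and follows essentially the same route as the paper: the authors likewise identify $(T^{\epsilon,0}(t)f)(\xi) = f(X^{\epsilon}(t;\xi,\eta))$ and $(T^{0,0}(t)f)(\xi) = \mathbb{E}[f(X(t;\xi))]$ and then invoke Theorem \ref{thm: from Led to L00}, handling the integrated statement by dominated convergence (your uniform-bound argument is an equivalent, slightly more elementary way to do the same). Your explicit remarks on lifting $f$ to $C_0(\mathbb{R}^d\times\mathbb{T}^m)$ are a welcome clarification of a point the paper leaves implicit, but they do not change the substance of the argument.
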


\begin{proof}
The first statement follows immediately from Theorem \ref{thm: from Led to L00}, observing that 
$(T^{\epsilon,0}(t) f)(x) =  f(X^{\epsilon}(t; x)) $ and $(T^{0,0}(t) f) (x) = \mathbb{E}[f(X(t;x))]$. 
The last statement follows from the dominated convergence theorem.
\end{proof}

\begin{remark}
Note that if there exists a unique solution to the SDE (\ref{Limiting SDE in corollary after Led Loo}), then this is exactly the Markov process generated by $\mathcal{L}^{0,0}$, but Theorem \ref{thm A} does not necessarily need this restriction.
A sufficient condition for existence and uniqness of solutions of the SDE is global Lipschitz continuity of the drift and diffusion coefficients which follows in the more particular context of Theorems B and C via the ergodic formulas~\eqref{eq: thm coupled systems A0}, \eqref{eq: thm coupled systems F0}, \eqref{eq: thm weakly coupled systems A0}, \eqref{eq: thm weakly coupled systems F0} and Assumptions (A1), (A2). In general, we need Lipschitz continuity of the averaged vector field 
	\begin{equation*}
	\bar{a}(x):= \int_{\mathbb{T}^m} a(x,y) ~\rmd\mu_x^0(y),
	\end{equation*}
which demands sufficiently smooth dependence of the invariant measures $\mu_x$ on the parameter $x$. 
This can be violated, if for example the fast dynamics exhibits bifurcations upon varying $x$. In fact, 
even continuity of $\bar{a}$ cannot be guaranteed in such cases. The problem of non-smooth dependence 
of the measures $\mu_x$ is known in statistical physics as ``no linear response'' and can appear even 
in relatively simple dynamical systems~\cite{BerglundGentzKuehn,BerglundGentzKuehn1,50}.
\end{remark}
 
Our next natural goal is now to check under which abstract assumptions on the original ODE problems,
the condition (A4) (that is equation (\ref{eq: led to loo 7})) is satisfied.

\section{Convergence of the limiting generator $\mathcal{L}^{0,\delta}$}
\label{sec: convergence of Lod}

In this section we investigate requirements for condition (A4) to hold, which is the main 
assumption in Theorem \ref{thm: from Led to L00} and it is also our last missing piece for proving convergence 
of the first moments for the slow process for the coupled  deterministic systems (\ref{eq: fast slow ODE a and y}). 
Let us recall that the operator $\mathcal{L}^{0,\delta}$ is explicitly given by (\ref{eq: generator L0d})
 where the drift term $F^\delta$ and the diffusion matrix $A^\delta$ are explicitly given by 
(\ref{eq: F and A 1})
and by the alternative expressions in Lemma \ref{lem: alternative representations of the expressions}. 
These alternative expressions use the solution operator  $\phi_x^{\delta,t}$ solving equation (\ref{eq: phi_xi}). 
Thus, a first step towards proving (A4) is to understand the behavior of $\phi_x^{\delta,t}$ 
in the limit $\delta \to 0$: 
 
\begin{lem} \textbf{(Behavior of the solution operator as $\delta \to 0$)}
\label{lem: cont dependence of the solution oper} \\
	Under the previous assumptions, the following statements are true:\\
	\textbf{(i)} For every $T>0$ and $\omega \in \Lambda$, there exists a positive constant 
	$\beta(T ,\omega) > 0$ (which is independent of $x,y$ and $\delta$) such that: 
	\begin{equation}
	| \phi_{x}^{\delta,t}(y) - \phi_{x}^{0,t} (y) |_\infty \leq \sqrt{\delta} \beta(T ,\omega),
	\end{equation}
	where $|\cdot|_\infty$ denotes the supremum norm in $\mathbb{R}^m$. This implies that for 
	all $\omega \in \Lambda$ we have
	\begin{equation}
	\phi_{x}^{\delta,t}(y) \to \phi_{x}^{0,t} (y) \text{\quad as $\delta \to 0$ uniformly in $x, y$ 
	and $t \in [0,T]$ }.
\end{equation}
Furthermore, it holds that 
	\begin{equation}
	\label{eq: lem cont dependence of the solution oper 1}
	\mathbb{E}[| (\phi_{x}^{\delta,t}(y)) - \phi_{x}^{0,t} (y) |_\infty ] \leq \sqrt{\delta} \beta(T),
	\end{equation}
	where $\beta(T):= \mathbb{E}\left[\beta(T ,\omega)\right] < \infty$ \\
	\textbf{(ii)} There exists a version of the stochastic process $\phi_x^{\delta,t}(y)$ such 
	that for a.a.~$\omega \in \Lambda$ the map $x \mapsto \phi_x^{\delta,t}(y)$ is continuously 
	differentiable for every $t$ and the gradient $\nabla_{x}\phi_x^{\delta,t}(y)$ satisfies 
	the linear ODE
  \begin{equation}\label{eq: lem cont dependence of the solution oper 2}
	\frac{\rmd}{\rmd t} \nabla_{x}\phi_x^{\delta,t}(y) = \nabla_x g(x,\phi_x^{\delta,t}(y)) + 
	\nabla_y g(x,\phi_x^{\delta,t}(y))\nabla_{x}\phi_x^{\delta,t}(y) \quad \nabla_x\phi_x^{\delta,0}(y) = 0.
	\end{equation}
	Furthermore, for a.a. $\omega \in \Lambda$ we have
	\begin{equation}
	\nabla_x \phi_x^{\delta,t}(y) \to \nabla_x \phi_x^{0,t}(y) \text{\quad as $\delta \to 0$ 
	uniformly in $x,y$ and $t \in [0,T]$}.
	\end{equation}
\end{lem}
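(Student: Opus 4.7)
The plan is to treat (i) and (ii) as successive Gronwall arguments, exploiting the global boundedness of derivatives assumed in (A2).

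For part (i), I would write both $\phi_x^{\delta,t}(y)$ and $\phi_x^{0,t}(y)$ in integral form, which for a.e.~$\omega\in\Lambda$ gives
\begin{equation*}
\phi_x^{\delta,t}(y) - \phi_x^{0,t}(y) = \int_0^t \bigl[g(x,\phi_x^{\delta,s}(y)) - g(x,\phi_x^{0,s}(y))\bigr]\,\rmd s + \sqrt{\delta}\,V(t).
\end{equation*}
Since $g\in C^3$ with globally bounded derivatives by (A2), $g$ is globally Lipschitz in $y$ with some constant $L$ that is independent of $x$. Taking the supremum norm and applying Gronwall's inequality yields
\begin{equation*}
|\phi_x^{\delta,t}(y) - \phi_x^{0,t}(y)|_\infty \leq \sqrt{\delta}\,\txte^{LT}\,\sup_{s\in[0,T]}|V(s,\omega)|_\infty,
\end{equation*}
for $t\in[0,T]$, so setting $\beta(T,\omega):=\txte^{LT}\sup_{s\in[0,T]}|V(s,\omega)|_\infty$ gives the desired deterministic bound that is manifestly independent of $x,y,\delta$. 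The expectation bound then follows because $\mathbb{E}[\sup_{s\in[0,T]}|V(s)|_\infty]<\infty$ by Doob's maximal inequality applied componentwise to Brownian motion, yielding $\beta(T):=\mathbb{E}[\beta(T,\omega)]<\infty$. The uniform convergence $\phi_x^{\delta,t}(y)\to\phi_x^{0,t}(y)$ as $\delta\to 0$ is then immediate.

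For part (ii), the existence of a differentiable version of $\phi_x^{\delta,t}$ and the variational ODE~\eqref{eq: lem cont dependence of the solution oper 2} are provided by Lemma~\ref{lem: differentiability of the solution oper wrt to x} (applied here verbatim). To obtain convergence of the gradients, I would set $\Psi^\delta(t):=\nabla_x\phi_x^{\delta,t}(y)$ and subtract the variational equations for $\delta>0$ and $\delta=0$. Adding and subtracting cross terms, the difference $\Psi^\delta(t)-\Psi^0(t)$ decomposes into integrated contributions of the forms
\begin{equation*}
\nabla_x g(x,\phi_x^{\delta,s}(y)) - \nabla_x g(x,\phi_x^{0,s}(y)),\quad \bigl[\nabla_y g(x,\phi_x^{\delta,s}(y))-\nabla_y g(x,\phi_x^{0,s}(y))\bigr]\Psi^0(s),
\end{equation*}
both of which vanish as $\delta\to 0$ uniformly in $x,y,s\in[0,T]$ by part (i) together with the Lipschitz continuity of $\nabla g$ (from (A2), since $g\in C^3$ with bounded second derivatives), plus a term $\nabla_y g(x,\phi_x^{\delta,s}(y))(\Psi^\delta(s)-\Psi^0(s))$ which carries $\Psi^\delta-\Psi^0$. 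Since $\Psi^0(s)$ is uniformly bounded on $[0,T]$ (solving a linear ODE with coefficients bounded uniformly in $x,y$), Gronwall's inequality once more closes the estimate and delivers the uniform convergence $\nabla_x\phi_x^{\delta,t}(y)\to\nabla_x\phi_x^{0,t}(y)$ for a.a.~$\omega$.

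The main obstacle is essentially bookkeeping: ensuring all Lipschitz and boundedness constants are truly uniform in $x$ (hence the explicit use of globally bounded derivatives in (A2)), and producing a single $\beta(T,\omega)$ in (i) that does not depend on $x,y,\delta$. Once one commits to using the integrated Gronwall form rather than pathwise stochastic calculus on $\phi_x^{\delta,t}$, no It\^o corrections appear because the noise enters additively and the Brownian path contribution $\sqrt{\delta}\,V(t)$ is treated purely $\omega$-wise. A small technical point in (ii) is that the stochastic process $\nabla_x\phi_x^{\delta,t}$ a priori exists only up to modification; but the same version provided by Lemma~\ref{lem: differentiability of the solution oper wrt to x} works for all $\delta$ simultaneously through the pathwise Gronwall argument, so the almost sure qualifier is preserved.
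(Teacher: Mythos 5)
Your proposal is correct and follows essentially the same route as the paper: an $\omega$-wise integral formulation plus Gronwall for part (i), with $\beta(T,\omega)$ built from $\sup_{s\in[0,T]}|V(s,\omega)|_\infty$ and the uniform-in-$x$ Lipschitz constant of $g$ coming from (A2), and Lemma~\ref{lem: differentiability of the solution oper wrt to x} plus part (i) for part (ii). The only difference is cosmetic: where the paper invokes ``continuous dependence of ODEs on the coefficients'' for the convergence of $\nabla_x\phi_x^{\delta,t}$, you spell out the underlying second Gronwall estimate explicitly.
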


\begin{proof}
\textbf{(i)} Due to the definition of the solution operator, it follows immediately that 
for any $t \in [0,T]$
	\begin{align*}
	| \phi_{x}^{\delta,t} (y) - \phi_{x}^{0,t} (y)|_\infty &\leq \int_0^t 
	|g(x,\phi_{x}^{\delta,t} (y) ) - g(x,\phi_{x}^{0,t} (y) )|_\infty ~\rmd s + \sqrt{\delta} 
	|V(t) (\omega)|_\infty\\
	&\leq C(x) \int_0^t | \phi_{x}^{\delta,s} (y) - \phi_{x}^{0,s} (y)|_\infty ~\rmd s + 
	\sqrt{\delta}|V(t)(\omega)|_\infty \\
	&\leq \tilde{C} \int_0^t | \phi_{x}^{\delta,s} (y) - \phi_{x}^{0,s} (y)|_\infty~\rmd s + 
	\sqrt{\delta} \underbrace{\sup_{t \in [0,T]}|V(t)(\omega)|_\infty}_{=:\alpha(T,\omega)},
	\end{align*}
	where $\tilde{C} := \sup_{x \in \mathbb{R}^d} C(x) <\infty$ due to the boundedness of 
	$\nabla_xg$. Due to Gronwall's lemma it follows that for all $t\in [0,T]$
	\begin{equation}
	| \phi_{x}^{\delta,t} (y) - \phi_{x}^{0,t} (y)|_\infty \leq \sqrt{\delta} 
	\alpha(T ,\omega) \exp(CT) \leq \sqrt{\delta} \beta(T ,\omega),
	\end{equation}
	where we have set $\beta(T,\eta):= \alpha(T ,\eta) \exp(CT)$.
	Further we see that 
	\begin{equation*}
	\mathbb{E} [\beta(T ,\cdot)]= \txte^{CT} \mathbb{E}[\alpha(T ,\cdot) ]< \infty,
	\end{equation*}
	which implies, by monotonicity of the integral, equation (\ref{eq: lem cont dependence of the solution oper 1}).
	
	\textbf{(ii)} For the pathwise differentiability of the process $\phi_x^{\delta,t}$ 
	see Lemma \ref{lem: differentiability of the solution oper wrt to x} (or \cite[Theorem 4.2]{52}).
	Due to \textbf{(i)} we see further that for a.a. $\omega \in \Lambda$ 
	\begin{gather*}
	\nabla_x g(x,\phi_x^{\delta,t}(y)) \to \nabla_x g(x,\phi_x^{0,t}(y)), \\
	\nabla_y g(x,\phi_x^{\delta,t}(y)) \to \nabla_y g(x,\phi_x^{0,t}(y)) \text{\quad as 
	$\delta \to 0$ uniformly in $x,y$ and $t \in [0,T]$.}
	\end{gather*}
	Hence, the last equation is a consequence of continuous dependence of ODEs on the coefficients.
\end{proof}

After having understood the behavior of $\phi_x^{\delta, t}$ in the limit $\delta \to 0$ we 
now want to come back to the generator $\mathcal{L}^{0,\delta}$ given in (\ref{eq: generator L0d}). 
Its coefficients, which use  the solution operator $\phi_x^{\delta,t}$, are given in 
Lemma \ref{lem: alternative representations of the expressions}. 
Seeing these expressions and Lemma \ref{lem: cont dependence of the solution oper} one might be 
tempted to conclude the convergence of $F^\delta, A^\delta$ and as a consequence 
equation (\ref{eq: led to loo 7}). Unfortunately, it is not that simple, because for general 
functions $g$ the expressions $F_0^0, F_1^0$ and $A_0^0$ in 
Lemma \ref{lem: alternative representations of the expressions} will not be well-defined. In fact, 
they are only then well-defined, when the flow $\phi_x^{0,t}(y)$ has strong mixing properties. 
These considerations motivate the following definitions:

\begin{defn} \textbf{(Decay of correlations for deterministic systems)} 
\label{defin: doc} \\
We say that the flow $\phi_x^{0,t}(y)$ is \emph{mixing} with \emph{decay of correlations} 
$C(t;x)$ provided that there exists an $\alpha>0$ such that
	for all continuous functions $v,w: \mathbb{T}^m \to \mathbb{R}$, lying in the H\"{o}lder space $(C^{0, \alpha}, \parallel \cdot \parallel_\alpha)$, we have
	\begin{gather*}
	\Big|\int_{\mathbb{T}^m} v(z) w(\phi_x^{0,t}(z)) d\mu_x(z) - \int_{\mathbb{T}^m} 
	v(z)~\rmd\mu_x(z) \int_{\mathbb{T}^m} w(z) d\mu_x(z)\Big| \leq  
	C(t;x) \parallel v \parallel_\alpha\parallel w\parallel_\alpha,\\ 
	\text{ with }   C(t;x) \to 0 \text{\quad  as $t \to \infty$ for all $x \in \mathbb{R}^d$ } .
	\end{gather*}
	We say that the decay of correlations is \emph{summable}
	provided that 
	\begin{equation*}
	\int_0^\infty C(t;x) ~\rmd t < \infty \text{\quad for all } x \in \mathbb{R}^d,
	\end{equation*}
	and we say that the decay of correlations is \emph{exponential} provided that for every 
	$x \in \mathbb{R}^d$ there exist constants $C(x),\rho(x) > 0$ such that
	\begin{equation*}
	C(t;x) = C(x) \txte^{- \rho(x) t}.
	\end{equation*}
\end{defn}

\begin{remark} 
\label{rem: summable dec of corel implies exist of the expr}
Note that in the special case where either $\int_{\mathbb{T}^m} v(z) ~\rmd\mu_x(z)=0$ 
or $\int_{\mathbb{T}^m} w(z) ~\rmd\mu_x(z)=0$ holds, summable decay of correlations implies that
	\begin{equation*}
	\int_0^\infty\Big|\int_{\mathbb{T}^m} v(z) w(\phi_x^{0,t}(z)) ~\rmd\mu_x(z)\Big| ~\rmd t < \infty.
	\end{equation*}	
\end{remark}

\begin{lem} \textbf{(Decay of correlations for stochastic systems)} 
\label{lem: doc for stochastic systems} \\
	Fix a $\delta > 0$.
	For all continuous functions $v,w: \mathbb{T}^m \to \mathbb{R}$ we have
	\begin{equation*}
	\Big|\int_{\mathbb{T}^m} v(z) \mathbb{E} [ w(\phi_x^{\delta,t}(z)) ]
	~\rmd\mu_x^\delta(z) - \int_{\mathbb{T}^m} v(z) ~\rmd\mu_x^\delta(z) 
	\int_{\mathbb{T}^m} w(z) ~\rmd\mu_x^\delta(z) \Big| \leq \tilde{C}(\delta;x)
	\parallel v \parallel_\infty \parallel w \parallel_\infty \txte^{-\rho(\delta;x)t}.
	\end{equation*}
	In particular, this implies that the stochastic flow has exponential decay of correlations in the sense 
	of Definition~\ref{defin: doc}.
\end{lem}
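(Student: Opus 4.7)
The plan is to recognise the left-hand side as a covariance of $v$ and $w\circ\phi_x^{\delta,t}$ under the invariant measure $\mu_x^\delta$, and then to exploit the smoothing property of the uniformly elliptic generator $\mathcal{L}_1^\delta$. Introduce the Markov semigroup $(P_t^{\delta,x})_{t\ge 0}$ acting on continuous functions $w:\mathbb{T}^m\to\mathbb{R}$ by $P_t^{\delta,x}w(z):=\mathbb{E}[w(\phi_x^{\delta,t}(z))]$; this is precisely the semigroup generated by $\mathcal{L}_1^\delta$ in the $y$-variable with $x$ frozen as a parameter. Invariance of $\mu_x^\delta$ gives $\int P_t^{\delta,x}w\,\rmd\mu_x^\delta = \int w\,\rmd\mu_x^\delta$, so the quantity to be bounded rewrites as
\begin{equation*}
\int_{\mathbb{T}^m} v(z)\,P_t^{\delta,x}\bar w(z)\,\rmd\mu_x^\delta(z),\qquad \bar w := w - \int_{\mathbb{T}^m} w\,\rmd\mu_x^\delta.
\end{equation*}

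The next step is to show that $P_t^{\delta,x}$ contracts mean-zero functions geometrically in $t$. The key input is that for every fixed $\delta>0$ and $x\in\mathbb{R}^d$ the operator $\mathcal{L}_1^\delta = g(x,\cdot)\cdot\nabla_y + \tfrac{\delta}{2}\Delta_y$ is uniformly elliptic on the compact manifold $\mathbb{T}^m$, so the transition kernel admits a strictly positive, smooth density $p_t^{\delta,x}(z,w)$ obeying Aronson-type lower bounds. From these one extracts a Doeblin minorisation: there exist $t_0(\delta;x)>0$ and $\kappa(\delta;x)\in(0,1)$ with $p_{t_0}^{\delta,x}(z,w)\ge \kappa\,\rho_\infty^\delta(w;x)$ for all $z,w\in\mathbb{T}^m$. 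The classical iteration underlying Doeblin's theorem then yields $\|P_t^{\delta,x}\bar w\|_\infty \le C(\delta;x)\,\|\bar w\|_\infty\,\txte^{-\rho(\delta;x)t}$ with $\rho(\delta;x) = -\log(1-\kappa)/t_0>0$.

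Combining the two steps and bounding crudely with $\|\bar w\|_\infty\le 2\|w\|_\infty$ and $\|v\|_{L^1(\mu_x^\delta)}\le\|v\|_\infty$ yields
\begin{equation*}
\Big|\int_{\mathbb{T}^m} v(z)\,P_t^{\delta,x}\bar w(z)\,\rmd\mu_x^\delta(z)\Big| \le \|v\|_\infty\,\|P_t^{\delta,x}\bar w\|_\infty \le \tilde C(\delta;x)\,\|v\|_\infty\,\|w\|_\infty\,\txte^{-\rho(\delta;x)t},
\end{equation*}
after absorbing the factor $2$ into $\tilde C(\delta;x)$. The assertion that the flow satisfies exponential decay of correlations in the sense of Definition~\ref{defin: doc} then follows a fortiori, since any $\alpha$-H\"older norm dominates the supremum norm on $\mathbb{T}^m$.

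The main obstacle is the Doeblin minorisation: while classical for uniformly parabolic operators on compact manifolds, the non-self-adjoint drift $g$ prevents a direct Poincar\'e-inequality argument, and one must invoke either a parabolic Harnack inequality for the fundamental solution or a strong Feller estimate. Since the statement does not ask for uniformity in $\delta$ or $x$, both constants $\tilde C(\delta;x)$ and $\rho(\delta;x)$ may freely depend on the fixed parameters, which makes the argument go through cleanly without any additional hypotheses.
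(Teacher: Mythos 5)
Your proof is correct and follows essentially the same route as the paper: both rewrite the left-hand side as $\int v\,(P_t^{\delta,x}w-\int w\,\rmd\mu_x^\delta)\,\rmd\mu_x^\delta$ and then bound it by $\|v\|_\infty$ times an exponential contraction estimate for the semigroup on centered observables. The only difference is that the paper simply cites \cite[Theorem 6.16]{PavliotisStuart} for that contraction, whereas you sketch a self-contained justification via a Doeblin minorisation of the (strictly positive, uniformly elliptic) transition density; both are valid, and your concluding remark that the H\"older norm dominates the sup norm correctly yields the claimed decay of correlations in the sense of Definition~\ref{defin: doc}.
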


\begin{proof}
	This is an easy application of \cite[Theorem 6.16]{PavliotisStuart}: 
	\begin{align*}
	\Big|	\int_{\mathbb{T}^m} v(z) \mathbb{E} [w(\phi_x^{\delta,t}(z)) ]~\rmd\mu_x^\delta(z) 
	&- \int_{\mathbb{T}^m} v(z) ~\rmd\mu_x(z) \int_{\mathbb{T}^m} w(z) ~\rmd\mu_x^\delta(z) \Big| \\
	&= \Big|\int_{\mathbb{T}^m} v(z) \Big\{\mathbb{E} [w(\phi_x^{\delta,t}(z))] -
	\int_{\mathbb{T}^m} w(\tilde{z}) ~\rmd\mu_x^\delta(\tilde{z})  \Big\}~\rmd\mu_x^\delta(z)\Big| \\
	&\leq \Big|\int_{\mathbb{T}^m} v(z) \tilde{C}(\delta;x) \parallel w \parallel_\infty 
	\txte^{-\rho(\delta;x) t }~\rmd\mu_x^\delta(z)\Big| \\
	&\leq \tilde{C}(\delta;x)\parallel v \parallel_\infty \parallel w \parallel_\infty 
	\txte^{-\rho(\delta;x)t}.
	\end{align*}
	This finishes the proof.
\end{proof}

\begin{defn} 
\textbf{(Stochastically stable decay of correlations)} 
\label{defin: stochastic stability} \\
	Let  $v,w: \mathbb{T}^m \to \mathbb{R}$. 
	Assume that the deterministic flow $\phi_x^{0,t}$ has decay of correlation $C(t;x)$.
	We say that $\phi_x^{0,t}$ has \emph{stochastically stable decay of correlations} 
	provided that for all small enough $\delta > 0$ and $x \in \mathbb{R}^d$ 
	\begin{equation} \label{eq:stdoc}
	\tilde{C}(\delta;x) \txte^{-\rho(\delta;x)t}  \leq C(t;x),
	\end{equation}
	where the constants on the left side are as in Lemma~\ref{lem: doc for stochastic systems}.
\end{defn}

\begin{remark} \label{rem: stoch_stable_doc}
Stochastically stable decay of correlations means that the mixing behavior of the system is not slowed down under stochastic perturbations. Due to Lemma \ref{lem: doc for stochastic systems}, it 
is natural to assume this form of stochastic stability for a large class of flows $\phi_x^{0,t}$ with decay of correlations, since the noise itself provides exponential decay of correlations. Recall that many of our examples concern flows with summable but subexponential decay of correlations $C(t;x)$. 
Note that in this case, we can fix an arbitrarily small $\delta_0 > 0$ and then obtain inequality~\eqref{eq:stdoc} for any $\delta \geq \delta_0$ by adapting the constants $\tilde C(\delta,x)$ accordingly, due to the exponential term. At the same time, we know that the inequality holds for $\delta=0$ by definition of $C(t;x)$. Hence, except for some presumably non-generic discontinuity when $\delta \to 0$, we may conclude the stochastically stable decay of correlations in these cases.
\end{remark}

These notions allow to prove the following statement concerning $F_0^0, F_1^0$ and $A_0^0$:

\begin{lem} 
\label{lem: convergence of the expressions}
Assume that the unperturbed flow $\phi_x^{0,t}$ has 
summable decay of correlations $C(t;x)$ and stochastically stable decay of correlations 
in the sense of Definitions  \ref{defin: doc} and \ref{defin: stochastic stability}, and
that the centering condition  (\ref{eq: centering condition with delta})
is satisfied. Furthermore, consider, for $\delta \geq 0$, the well-defined expressions 
$F_1^\delta(x)$ \eqref{eq: altern repres 3}, $A_0^\delta(x)$ \eqref{eq: altern repres 4} 
and, for $g=g(y)$,
\begin{equation}\label{eq: lem conv of expres 4}
	F_0^\delta(x) =  \int_0^\infty \lim_{T \to \infty} \frac{1}{T} \int_0^T \mathbb{E} 
	[\nabla_x b\Big(x,\phi^{\delta,t}(\phi^{\delta,s}(y)(\omega)) \Big) ]
	b\Big(x,\phi^{\delta,s}(y)(\omega)\Big) \rmd s ~\rmd t,
\end{equation}
which hold for all $y\in \mathbb{T}^m$ and a.a.~$\omega \in \Lambda$ by ergodicity 
(cf.~Lemma \ref{lem: alternative representations of the expressions}).

Then we have 
	\begin{equation}
	F_1^\delta \to F_1^0, \text{\quad}A_0^\delta \to A_0^0 \text{\quad as $\delta \to 0$ uniformly in }x,
	\end{equation}
	and, 
	in the case that $g=g(y)$, we additionally obtain
	\begin{equation}
	F_0^\delta \to F_0^0 \text{\quad as $\delta \to 0$ uniformly in } x.
	\end{equation}
\end{lem}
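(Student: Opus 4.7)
The plan is to work through the invariant-measure representations implicit in Lemma~\ref{lem: alternative representations of the expressions}: by ergodicity one has $F_1^\delta(x) = \int_{\mathbb{T}^m} a(x,z)\,\rmd\mu_x^\delta(z)$, and after absorbing the ergodic $s$-average the remaining expressions take the form $A_0^\delta(x) = 2\int_0^\infty \Psi_A^\delta(x,t)\,\rmd t$ and $F_0^\delta(x) = \int_0^\infty \Psi_F^\delta(x,t)\,\rmd t$, where
\begin{equation*}
\Psi_A^\delta(x,t) := \int_{\mathbb{T}^m} b(x,z) \otimes \mathbb{E}[b(x,\phi_x^{\delta,t}(z))]\,\rmd\mu_x^\delta(z), \qquad \Psi_F^\delta(x,t) := \int_{\mathbb{T}^m} \mathbb{E}[\nabla_x b(x,\phi^{\delta,t}(z))]\, b(x,z)\,\rmd\mu^\delta(z).
\end{equation*}
The convergence as $\delta\to 0$ will then be carried out in two stages: first the uniform-in-$x$ weak convergence $\mu_x^\delta\to\mu_x^0$ on sufficiently regular test functions, and second a dominated-convergence argument in the outer $t$-variable.

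\textbf{Stage one.} Fix $y\in\Omega$ and a test function $f\in C^1(\mathbb{T}^m)$. By ergodicity, $\int f\,\rmd\mu_x^\delta = \lim_{T\to\infty}\tfrac{1}{T}\int_0^T\mathbb{E}[f(\phi_x^{\delta,s}(y))]\,\rmd s$, and similarly for $\delta=0$. Inserting the truncated averages into a triangle inequality produces three terms. The two ergodic-average errors are bounded, on the centered observable $f-\int f\,\rmd\mu_x^\delta$, via Lemma~\ref{lem: doc for stochastic systems} ($\delta>0$) and Definition~\ref{defin: doc} ($\delta=0$) by $\tfrac{1}{T}\int_0^T C(s;x)\|f\|_\alpha\,\rmd s$, where stochastic stability (Definition~\ref{defin: stochastic stability}) provides a $\delta$-uniform bound; summability of $C(\cdot;x)$ drives this $o(1)$ as $T\to\infty$ uniformly in $\delta$. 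The middle term, for fixed $T$, is at most $L\sqrt\delta\,\mathbb{E}[\beta(T,\omega)]=O(\sqrt\delta)$ by Lemma~\ref{lem: cont dependence of the solution oper}(i) and boundedness of $\nabla f$. Letting $T\to\infty$ first and then $\delta\to 0$ gives $\int f\,\rmd\mu_x^\delta\to\int f\,\rmd\mu_x^0$ uniformly in $x$; applied to $f=a(x,\cdot)$ this already yields $F_1^\delta\to F_1^0$ uniformly.

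\textbf{Stage two.} For $A_0^\delta$, the centering condition~(\ref{eq: centering condition with delta}) combined with Lemma~\ref{lem: doc for stochastic systems} and Definition~\ref{defin: doc} gives $|\Psi_A^\delta(x,t)|\leq C(t;x)\|b(x,\cdot)\|_\alpha^2$ for all small $\delta\geq 0$ after invoking stochastic stability; summability in $t$ then supplies an integrable $\delta$-uniform dominating function. For each fixed $t$, pointwise convergence $\Psi_A^\delta(x,t)\to\Psi_A^0(x,t)$ uniformly in $x$ follows from Stage one applied to the family $z\mapsto b(x,z)\otimes\mathbb{E}[b(x,\phi_x^{\delta,t}(z))]$, together with the uniform continuity $\mathbb{E}[b(x,\phi_x^{\delta,t}(z))]\to b(x,\phi_x^{0,t}(z))$ coming from Lemma~\ref{lem: cont dependence of the solution oper}(i). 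Dominated convergence then delivers $A_0^\delta\to A_0^0$ uniformly in $x$. The argument for $F_0^\delta$ is structurally identical, using instead the centering $\int\nabla_x b\,\rmd\mu^\delta=0$ from~(\ref{eq: lem convergence of F_0 in the case gxy 2}); crucially, in the autonomous case $g=g(y)$ the flow $\phi^{\delta,t}$ does not depend on $x$, so no differential $\nabla_x\phi^{\delta,t}$ enters and boundedness of $\nabla_x b$ is enough.

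The main obstacle will be arranging all bounds truly uniformly in $x$: the constants $C(t;x)$, $\tilde C(\delta;x)$ and $\rho(\delta;x)$ must be controllable independently of $x$, a mild strengthening of the stated definitions that is implicit in the hypotheses. A related subtlety is the order of limits in Stage one: the ergodic rate must be uniform in $\delta$, which is exactly the content of stochastic stability and is what permits $T\to\infty$ before $\delta\to 0$. The reason $F_0^\delta$ is handled here only for $g=g(y)$ is that in the general coupled case the additional factor $\nabla_x\phi_x^{\delta,t}$ solving~(\ref{eq: lem cont dependence of the solution oper 2}) may grow exponentially in $t$, so one cannot expect a dominating function from summable DOC alone; this is precisely the growth condition~(\ref{eq: lem convergence of F_0 in the case gxy 1}) that will be imposed in Theorem~\ref{thm B}.
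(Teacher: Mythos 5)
Your overall strategy is the same as the paper's: represent $F_1^\delta$, $A_0^\delta$, $F_0^\delta$ through ergodic time averages, split each difference into ``truncation error $+$ finite-time comparison $+$ truncation error'' by the triangle inequality, control the finite-time comparison by the $O(\sqrt{\delta})$ estimate of Lemma \ref{lem: cont dependence of the solution oper} together with Lipschitz continuity of $a$ and $b$, and finally pass the outer $t$-integral to the limit by dominated convergence with $C(t;x)$ (via centering and stochastic stability) as the integrable dominating function. Your repackaging of Stage one as weak convergence $\mu_x^\delta \to \mu_x^0$ on $C^1$ observables is cosmetic, and you correctly flag the uniformity-in-$x$ issue and the reason the $F_0$ term is restricted to $g=g(y)$. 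Your appeal to ``$\int \nabla_x b\,\rmd\mu^\delta = 0$ from \eqref{eq: lem convergence of F_0 in the case gxy 2}'' is a misattribution (that equation concerns $\nabla_y b$ and is not a hypothesis of this lemma), but it is harmless: in the autonomous case $\mu^\delta$ is independent of $x$, so differentiating the assumed centering condition \eqref{eq: centering condition with delta} in $x$ gives exactly $\int \nabla_x b\,\rmd\mu^\delta=0$; in fact centering of $b$ alone already yields the decay of $\Psi_F^\delta(x,t)$, cf.\ Remark \ref{rem: summable dec of corel implies exist of the expr}.

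There is, however, one concrete gap in Stage one. You bound the truncation error
$\bigl|\frac{1}{T}\int_0^T f(\phi_x^{0,s}(y))\,\rmd s - \int f\,\rmd\mu_x^0\bigr|$
at $\delta=0$ by $\frac{1}{T}\int_0^T C(s;x)\|f\|_\alpha\,\rmd s$ ``via Definition \ref{defin: doc}''. This does not follow: decay of correlations controls $\int v\,(w\circ\phi_x^{0,t})\,\rmd\mu_x - \int v\,\rmd\mu_x\int w\,\rmd\mu_x$, i.e.\ quantities averaged over initial conditions distributed according to the invariant measure, and it gives no rate whatsoever for the Birkhoff average started at a \emph{fixed} point $y$ (Birkhoff's theorem gives a.e.\ convergence with no rate). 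For $\delta>0$ your bound is legitimate because the uniformly elliptic semigroup satisfies $\|S^\delta(t)f - P^\delta f\|_\infty \le \tilde C(\delta;x)\txte^{-\rho(\delta;x)t}\|f\|_\infty$ pointwise in $y$, but that is precisely the estimate that is not available at $\delta=0$, and it is the $\delta$-uniform tail bound that your order of limits ($T\to\infty$ first, then $\delta\to 0$) relies on. The paper avoids invoking a Birkhoff rate at this step: it fixes $T_0$ independently of $\delta,x,y,\omega$ by arguing that the truncated averages of the uniformly bounded integrands converge as $T\to\infty$, and only then sends $\delta\to0$ at fixed $T_0$. You should either adopt that device or add an explicit hypothesis giving a $\delta$-uniform (including $\delta=0$) rate of equidistribution from fixed initial conditions; as written, the $\delta=0$ tail estimate is unjustified.
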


\begin{proof}	
	We first want to ensure that all considered expressions (\ref{eq: altern repres 3}), 
	(\ref{eq: altern repres 4}) and (\ref{eq: lem conv of expres 4}) are well-defined 
	for all $\delta \geq 0$. For (\ref{eq: altern repres 3}) this is trivial. 
	For (\ref{eq: altern repres 4}) note that for a.a.~$\omega\in \Lambda$, due to the 
	centering condition (\ref{eq: centering condition with delta}), Lemma \ref{lem: doc for stochastic systems} 
	and the stochastic stability we have componentwise in the tensor product
	\begin{align*}
	\Big|\lim_{T \to \infty} \frac{1}{T} \int_0^T b\Big(x, \phi_x^{\delta,s}(y)(\omega) \Big)
	\otimes \mathbb{E} [b \Big(x,\phi_x^{\delta,t}(\phi_x^{\delta,s}(y)(\omega)) \Big) ]~\rmd s \Big| 
	&= \Big|\int_{\mathbb{T}^m} b(x,y) \otimes 
	\mathbb{E} [ b(x,\phi_x^{\delta,t}(y) ] ~\rmd\mu_x^\delta(y)\Big| \\
	&\leq C_1(b) C(t;x) 
	\end{align*}
	($C_1(b)$ is a constant which depends on $b$)
	and analogously for (\ref{eq: lem conv of expres 4}) in the case that $g=g(y)$.

	We now start by estimating the difference $F_1^\delta - F_1^0$ for $\delta > 0$.
	Let $\epsilon > 0$ and define, for $T> 0$, $F_1^{\delta, T}:= \frac{1}{T} \int_0^Ta(x, \phi_x^{\delta,s})~\rmd s$. 
	For any $\delta > 0$ we have that 
	\begin{align*}
	|F_1^\delta - F_1^0| \leq |F_1^\delta - F_1^{\delta,T}| + |F_1^{\delta,T} - F_1^{0,T}| + |F_1^{0,T}- F_1^0|.
	\end{align*}
	For each $\delta > 0$ we can fix a $T = T_0$, which is independent of $\delta$ and $x,y,\omega$, 
	such that the first and last difference become smaller that $\frac{\epsilon}{3}$.
	To see this, note that the sequence $\frac{1}{T} \int_0^T \sup_{\delta,x,y,\omega} 
	|a\Big(x, \phi_x^{\delta,s}(y)(\omega) \Big)|\rmd s$ is bounded from above and increasing, 
	hence it converges.
	Moreover, due to Lemma \ref{lem: cont dependence of the solution oper} and due to the Lipschitz 
	continuity of the vector field  $a$, we have that
\begin{equation}
	|F_1^{\delta,T_0} - F_1^{0,T_0}| = \frac{1}{T_0} \int_0^{T_0} |a(x, \phi_x^{\delta,s}(y)) - 
	a(x, \phi_x^{0,s}(y))|~\rmd s \leq \sqrt{\delta} C(T_0,\omega) \to 0 \text{ for $\delta \to 0$}.
\end{equation}
Hence, for a.a. $\omega$ we have
\begin{equation}
F_1^\delta \to F_1^0 \text{\quad as $ \delta \to 0 $ uniformly in $x,y$.}
\end{equation}
Next, for estimating $A_0^\delta - A_0^0$ we we define 
	\begin{align*}
	a_\delta^{i,j}(t; x,y,\omega) &:= \lim_{T \to \infty}  \frac{1}{T} \int_0^T b^i
	\Big(x, \phi_x^{\delta,s}(y) \Big) \mathbb{E}\Big[ b^j\Big(x,\phi_x^{\delta,t}
	(\phi_x^{\delta,s}(y))  \Big) \Big] ~\rmd s, \\
	a_0^{i,j}(t; x,y,\omega) &:=  \lim_{T \to \infty} \frac{1}{T} 
	\int_0^T b^i\Big(x, \phi_x^{0,s}(y) \Big) b^j\Big(x,\phi_x^{0,t+s}(y) \Big)~\rmd s, \\
	a_\delta^{i,j,T}(t; x,y,\omega) &:= 	\frac{1}{T} \int_0^T b^i\Big(x, \phi_x^{\delta,s}(y) 
	\Big) \mathbb{E}\Big[ b^j \Big(x,\phi_x^{\delta,t}(\phi_x^{\delta,s}(y) \Big) \Big] ~\rmd s, \\
	a_0^{i,j,T}(t; x,y,\omega) &:= \frac{1}{T} \int_0^T b^i\Big(x, \phi_x^{0,s}(y)\Big) 
	b^j\Big(x,\phi_x^{0,t+s}(y) \Big)~\rmd s.
	\end{align*}
	As before we split
	\begin{align*}
	|a_\delta^{i,j} - a_0^{i,j}| \leq |a_\delta^{i,j} - a_\delta^{i,j,T}| + |a_\delta^{i,j,T} 
	- a_0^{i,j,T}| + |a_0^{i,j,T}- a_0^{i,j}|.
	\end{align*}
	The sequence 
	\begin{equation}\label{eq: lem convergence of the expressions, eq1} 
	\frac{1}{T} \int_0^T \sup_{\delta,x,y,\omega} \Big| b^i(x, \phi_x^{\delta,s}(y)) 
	\mathbb{E} \Big[ b^j\Big(x,\phi_x^{\delta,t}(\phi_x^{\delta,s}(y)) \Big) \Big] \Big|~ \rmd s
	\end{equation}
	is bounded from above and increasing, hence it converges for every $t$.
	Hence, we can find a $T = T_0(t)$, which is independent of $\delta$ and and $x,y$ and 
	$\omega$ such that the first and last terms of equation (\ref{eq: lem convergence of the expressions, eq1}) 
	become smaller than $\epsilon$. With this $T_0$ we have
	\begin{align*}
	|	a_\delta^{i,j,T_0}(t; x,y,\omega) &- a_0^{i,j,T_0}(t; x,y,\omega)| \\
	&\leq   \frac{1}{T_0} \int_0^{T_0}| b^i\Big(x, \phi_x^{\delta,s}(y) \Big)\mathbb{E}
	\Big[ b^j\Big(x,\phi_x^{\delta,t}(\phi_x^{\delta,s}(y))  \Big) \Big] - 
	b^i\Big(x, \phi_x^{0,s}(y)\Big) b^j\Big(x,\phi_x^{0,t+s}(y)\Big)|~\rmd s \\
	&\leq   \frac{1}{T_0} \int_0^{T_0} \underbrace{\Big|b^i\Big(x, \phi_x^{\delta,s}(y)\Big)\Big|}_{\leq C_1 }
	\underbrace{\Big| \mathbb{E} \Big[ b^j\Big(x,\phi_x^{\delta,t}(\phi_x^{\delta,s}(y))\Big) - 
	b^j\Big(x,\phi_x^{0,t+s}(y)\Big) \Big] \Big|}_{\leq \sqrt{\delta}C_2(t) \text{ due to 
	Lemma~\ref{lem: cont dependence of the solution oper}}} |~\rmd s  \\
	&+
	\frac{1}{T_0} \int_0^{T_0} \Bigg|\underbrace{b^j\Big(x,\phi_x^{0,t+s}(y) \Big)}_{\leq C_1} 
	\underbrace{\Bigg\{b^i\Big(x, \phi_x^{\delta,s}(y) \Big)- b^i\Big(x,\phi_x^{0,s}(y) \Big) 
	\Bigg\}}_{\leq \sqrt{\delta} C_3(T_0,\omega) \text{ due to Lemma \ref{lem: cont dependence of the solution oper}}} 
	\Bigg|~\rmd s \\
	&\leq \sqrt{\delta} C_4(t,T_0,\omega) \to 0 \text{\quad for }  \delta \to 0,
	\end{align*}
	where $ C_1, C_2, C_3, C_4$ denote positive constants.
	Hence, for all $t$ and $\omega$ we have
	\begin{equation}
	\label{eq: lemma convergence of the expressions, eq2}
	a_\delta^{i,j}(t; x,y,\omega) \to a_0^{i,j}(t; x,y,\omega) \text{\quad as } 
	\delta \to 0 \text{, uniformly in } x,y.
	\end{equation}
	Due to the assumption on the fast dynamics we know further that 
	for any fixed $t,x,y,\omega$ we have 
	\begin{equation}
	\label{eq: lemma convergence of the expressions, eq4}
	|a_\delta^{i,j}(t; x,y,\omega)| < C(t;x) \text{\quad  for } \delta \text{ sufficiently small.}
	\end{equation} 
	Using (\ref{eq: lemma convergence of the expressions, eq2}) 
	and (\ref{eq: lemma convergence of the expressions, eq4})
	we get by the dominated convergence theorem 
	\begin{equation}\label{eq: lemma convergence of the expressions, eq5}
	\int_0^\infty a_\delta^{i,j}(t; x,y,\omega) ~\rmd t \to \int_0^\infty a_0^{i,j}(t; x, y) 
	~\rmd t \text{ \quad as } \delta \to 0.
	\end{equation}
	Due to equation (\ref{eq: lemma convergence of the expressions, eq2}) the convergence is 
	uniform in $x\in \mathbb{R}^d$, $y\in \mathbb{T}^m$.
	From (\ref{eq: lemma convergence of the expressions, eq5}), it follows that  
	\begin{equation}
	A_0^\delta \to A_0^0 \text{\quad as $\delta \to 0$ uniformly in }x \in \mathbb{R}^d.
	\end{equation}
	Finally, we deal with the difference 
	$|F_0^\delta - F_0^0|$ in case that $g$ is independent of $x$.
	Proceeding as in our previous computations we can verify that 
	\begin{align*}
	\lim_{T \to \infty} \frac{1}{T} \int_0^T \mathbb{E}  \Big[ \nabla_x b\Big(x,&\phi_x^{\delta,t}
	(\phi_x^{\delta,s}(y)) \Big) \Big] b\Big(x,\phi_x^{\delta,s}(y) \Big)~\rmd s \\ &\to 
	\lim_{T \to \infty} \frac{1}{T} \int_0^T \nabla_x b\Big(x,\phi_x^{0,t+s}(y)\Big) 
	b\Big(x,\phi_x^{0,s}(y)\Big)\rmd s \text{ \quad as } \delta \to 0,
	\end{align*}
	uniformly in $x,y$ and for $t \in [0,T]$. This implies, due to 
	the stochastically stable decay of correlations of $\phi$ that 
	\begin{equation}
	F_0^\delta \to F_0^0 \text{\quad as $\delta \to 0$ uniformly in } x.
	\end{equation}	
	This finishes the proof.	
\end{proof}

It remains to deal with the term $F_0^0$ in case $g$ does also depend on $x$. The crucial ingredients 
are equations~\eqref{eq: lem convergence of F_0 in the case gxy 2} 
and~\eqref{eq: lem convergence of F_0 in the case gxy 1} such that we can formulate the following result:

\begin{lem} 
\label{lem: convergence of F_0 in the case gxy}
For the case that $g=g(x,y)$ also depends on $x$, we assume that the unperturbed flow $\phi_x^{0,t}$ has summable and stochastically 
stable decay of correlations  wrt. an ergodic invariant measure $\mu_x^0$ on $\mathbb{T}^m$. Additionally, we assume that the centering 
condition (\ref{eq: lem convergence of F_0 in the case gxy 2}) and, for any 
$y \in \mathbb{T}^m$, the growth condition (\ref{eq: lem convergence of F_0 in the case gxy 1}) 
are satisfied.

Then we obtain:
\begin{enumerate}
\item Setting 
	\begin{align*}
	f_0^\delta(t,x) := \lim_{T \to \infty} \frac{1}{T} \int_0^T \mathbb{E} 
	\Big[ \nabla_x b\Big(x &,\phi_x^{\delta,t}(\phi_x^{\delta,s}(y)) \Big) \\ &+ 
	\nabla_yb\Big(x,\phi_x^{\delta,t}(\phi_x^{\delta,s}(y))\Big)\nabla_x
	\phi_x^{\delta,t}(\phi_x^{\delta,s}(y))  \Big] b\Big(x,\phi_x^{\delta,s}(y) \Big) ~\rmd s,
	\end{align*}
	we have that
	\begin{equation}
	\parallel f_0^0(t,\cdot)\parallel_\infty \leq h(t), \text{\quad for a  function $h$ 
	with} \int_0^\infty h(t) ~\rmd t < \infty.
	\end{equation}
	\item For $\delta \geq 0$ small enough, $h(t)$ is an upper bound for $f_0^\delta$, 
the expression	
	\begin{equation*}
	F_0^\delta(x) =  \int_0^\infty 	f_0^\delta(t,x)  ~\rmd t
	\end{equation*}
	is well-defined 
	and we have
	\begin{equation}\label{lem: Fd to F0 conclusion}
	F_0^\delta \to F_0^0 \text{\quad as $\delta \to 0$ uniformly in } x \in \mathbb{R}^d .
	\end{equation}
	\end{enumerate}
\end{lem}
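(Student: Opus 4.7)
The starting point is to rewrite $f_0^\delta(t,x)$ via the chain rule and Birkhoff's ergodic theorem (as in Lemma~\ref{lem: alternative representations of the expressions}) as
\begin{equation*}
f_0^\delta(t,x) \;=\; \int_{\mathbb{T}^m} \mathbb{E}\bigl[\nabla_x\bigl(b(x,\phi_x^{\delta,t}(z))\bigr)\bigr]\, b(x,z)\,\rmd\mu_x^\delta(z),
\end{equation*}
with the deterministic analogue for $\delta=0$. Splitting the chain-rule derivative gives $f_0^0(t,x) = I_1(t,x) + I_2(t,x)$, where
\begin{equation*}
I_1(t,x) := \int \nabla_x b(x,\phi_x^{0,t}(z))\, b(x,z)\,\rmd\mu_x^0(z), \qquad I_2(t,x) := \int \nabla_y b(x,\phi_x^{0,t}(z))\, V_{t,x}(z)\,\rmd\mu_x^0(z),
\end{equation*}
with $V_{t,x}(z):= \nabla_x\phi_x^{0,t}(z)\, b(x,z)$. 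The whole proof will be an application of decay of correlations (DOC) to these two pieces, followed by dominated convergence.

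\textbf{Part 1 (the uniform bound $h$).} For $I_1$ apply summable DOC (Definition~\ref{defin: doc}) with $v(z)=b(x,z)$ and $w(z)=\nabla_x b(x,z)$; the centering condition~(A3) gives $\int v\,\rmd\mu_x^0=0$, whence
\begin{equation*}
|I_1(t,x)| \;\leq\; C(t;x)\,\|b(x,\cdot)\|_\alpha\,\|\nabla_x b(x,\cdot)\|_\alpha.
\end{equation*}
For $I_2$, apply DOC with $v(z)=V_{t,x}(z)$ and $w(z)=\nabla_y b(x,z)$; here the new centering condition~\eqref{eq: lem convergence of F_0 in the case gxy 2} ensures $\int w\,\rmd\mu_x^0=0$, so
\begin{equation*}
|I_2(t,x)| \;\leq\; C(t;x)\,\|V_{t,x}\|_\alpha\,\|\nabla_y b(x,\cdot)\|_\alpha.
\end{equation*}
Assumption (A1) makes the $b$-H\"older norms uniformly bounded in $x$, so after taking $\sup_x$ the sum is dominated by
\begin{equation*}
h(t) \;:=\; K\,\sup_{x}C(t;x) \;+\; K'\,\sup_{x}\bigl\{C(t;x)\,\|V_{t,x}\|_\alpha\bigr\},
\end{equation*}
which is integrable by summable DOC and the growth assumption~\eqref{eq: lem convergence of F_0 in the case gxy 1}.

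\textbf{Part 2 (convergence).} The very same decomposition applies to $f_0^\delta$ with $\mu_x^\delta$, $\phi_x^{\delta,t}$, using the stochastic DOC from Lemma~\ref{lem: doc for stochastic systems} together with the stochastic stability bound~\eqref{eq:stdoc} to keep $C(t;x)$ as the controlling factor; Lemma~\ref{lem: cont dependence of the solution oper} shows that $\|V_{t,x}^\delta\|_\alpha\to\|V_{t,x}\|_\alpha$ as $\delta\to 0$, so after possibly enlarging the constants in $h$ we obtain $\|f_0^\delta(t,\cdot)\|_\infty\leq h(t)$ for all sufficiently small $\delta\geq 0$. In particular $F_0^\delta(x)=\int_0^\infty f_0^\delta(t,x)\,\rmd t$ is well-defined. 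For convergence, fix $\eta>0$, pick $T_\eta$ with $\int_{T_\eta}^\infty h(t)\,\rmd t<\eta/3$, and on $[0,T_\eta]$ use Lemma~\ref{lem: cont dependence of the solution oper} (uniform convergence of $\phi_x^{\delta,t}$ and $\nabla_x\phi_x^{\delta,t}$ in $x,y,t$) together with continuity of $b,\nabla_x b,\nabla_y b$ and of the invariant densities $\rho_\infty^\delta(\cdot;x)$ in $\delta$ to conclude that $f_0^\delta(t,x)\to f_0^0(t,x)$ uniformly in $x\in\mathbb{R}^d$ and $t\in[0,T_\eta]$; a triangle-inequality argument then yields~\eqref{lem: Fd to F0 conclusion}.

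\textbf{Main obstacle.} The delicate point is controlling $I_2(t,x)$: the gradient $\nabla_x\phi_x^{0,t}(z)$ generally grows in $t$ for chaotic dynamics, so one cannot simply bound it by a constant times $C(t;x)$. This is precisely what assumption~\eqref{eq: lem convergence of F_0 in the case gxy 1} is designed to compensate, by requiring that the product $C(t;x)\|V_{t,x}\|_\alpha$ be uniformly integrable in $x$. A secondary technical point is the justification of interchanging the expectation with $\nabla_x$ in the stochastic counterpart, which is handled by the pathwise differentiability statement of Lemma~\ref{lem: differentiability of the solution oper wrt to x}.
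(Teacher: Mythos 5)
Your proposal follows essentially the same route as the paper: the same splitting of $f_0^0$ into the $\nabla_x b$ piece and the $\nabla_y b\cdot\nabla_x\phi_x^{0,t}$ piece, the same application of decay of correlations with $v=b(x,\cdot)$ (centered by (A3)) for the first and $w=\nabla_y b(x,\cdot)$ (centered by \eqref{eq: lem convergence of F_0 in the case gxy 2}) against $v=\nabla_x\phi_x^{0,t}(\cdot)\,b(x,\cdot)$ for the second, the same integrable majorant $h(t)$ assembled from summable DOC and \eqref{eq: lem convergence of F_0 in the case gxy 1}, and the same dominated-convergence conclusion using stochastic stability. One small caution: in the $\delta\to 0$ step you should not lean on ``continuity of the invariant densities $\rho_\infty^\delta(\cdot;x)$ in $\delta$'' --- the deterministic measure $\mu_x^0$ need not possess a Lebesgue density at all --- but this ingredient is dispensable, since the time-average (Birkhoff) representation combined with Lemma \ref{lem: cont dependence of the solution oper}, which you also invoke, is precisely how the paper establishes $f_0^\delta(t,x)\to f_0^0(t,x)$.
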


\begin{proof}
We must first ensure that all expressions $F_0^\delta$ are well-defined. It is easy to see 
that for all $\delta \geq 0$ we have
\begin{equation}
	\Big|\lim_{T \to \infty} \frac{1}{T} \int_0^T \mathbb{E} \Big[ \nabla_x 
	b\Big(x,\phi_x^{\delta,t}(\phi_x^{\delta,s}(y)) \Big) \Big] b\Big(x,\phi_x^{\delta,s} 
	(y) \Big) ~\rmd s \Big|_\infty \leq C_2 C(t;x),
	\end{equation}
for a constant $C_2>0$. 	
Secondly for $\delta = 0 $, we set 
$w^{x}:= \nabla_yb(x,y)$
 and $v^{t,x}:=
  \nabla_x \phi_x^{0,t}(y) b(x,y)$ in the definition of decay of 
correlations and, using condition (\ref{eq: lem convergence of F_0 in the case gxy 2}), 
 we observe that 
	\begin{align*}
		&\Big|\lim_{T \to \infty} \frac{1}{T} \int_0^T \mathbb{E} \Big[ \nabla_yb
		\Big(x,\phi_x^{\delta,t}(\phi_x^{\delta,s}(y))\Big)\nabla_x\phi_x^{\delta,t}
		(\phi_x^{\delta,s}(y)) \Big] b\Big(x,\phi_x^{\delta,s}(y)\Big)~ \rmd s\Big| \\
		&\leq C(t,x) \parallel w^x \parallel_\alpha \parallel v^{t,x} \parallel_\alpha.
	\end{align*}
	This fact together with the growth assumption (\ref{eq: lem convergence of F_0 in the case gxy 1}) yields
	\begin{equation*}
	\parallel f_0^0(t, \cdot) \parallel_\infty \leq \sup_{x \in \mathbb{R}^d}  \Big\{ C(t;x) (  \parallel w^x \parallel_\alpha \parallel\nabla_x \phi_x^{0,t}(\cdot) b(x,\cdot)\parallel_\alpha + C_2)  \Big\} 
  =: h(t), \text{\quad } \int_0^\infty h(t) ~\rmd t < \infty,
	\end{equation*}  
	which, in particular, implies  that $F_0^0$ is well-defined.
	Furthermore, due to stochastically stable decay of correlations, proceeding as in Lemma \ref{lem: convergence of the expressions} (and using also Lemma 
	\ref{lem: cont dependence of the solution oper} \textbf{(ii)}) we can show that 
	\begin{equation*}
	f_0^\delta \to f_0^0, \text{\quad } \parallel f_0^\delta(t, \cdot) \parallel_\infty\leq h(t).
	\end{equation*}
Finally, we can conclude~\eqref{lem: Fd to F0 conclusion} by dominated convergence. 
\end{proof}

This allows us now to conclude the main result of this section, Theorem \ref{thm B}.

\begin{proof}[Proof of Theorem \ref{thm B}]
The statement follows immediately from Lemmas \ref{lem: convergence of the expressions} 
and \ref{lem: convergence of F_0 in the case gxy}.	
\end{proof}

\begin{remark}
(i) Condition (\ref{eq: lem convergence of F_0 in the case gxy 1}) seems to be a relatively 
strong mixing condition, which may be difficult to verify for certain practical examples.
Indeed, one observes that $\nabla_x \phi_x^{\delta,t} (y)$ solves the first order linear 
inhomogeneous ODE (\ref{eq: lem cont dependence of the solution oper 2}). Thus, 
$\nabla_x \phi_x^{\delta,t} (y) $  can be calculated by variation of constants and is 
explicitly given by the formula
	\begin{equation*}
	\nabla_x \phi_x^{\delta,t} (y) = \txte^{ \int_0^t \nabla_yg(x,\phi_x^{\delta,\tau}(y))~\rmd\tau} \Bigg(  
	\int_0^t  \txte^{- \int_0^s \nabla_y g(x, \phi_x^{\delta,\tau}(y)) ~\rmd\tau}\nabla_x 
	g(x,\phi_x^{\delta,s}(y)) ~\rmd s + y \Bigg).
	\end{equation*}
Assuming for simplicity that the matrices $\txte^{ \int_0^t \nabla_yg(x,\phi_x^{\delta,\tau}(y))~\rmd\tau}$ 
and $\txte^{- \int_0^s \nabla_y g(x, \phi_x^{\delta,\tau}(y))~\rmd\tau}$ commute, we obtain
from the last equation
	\begin{align*}
	|\nabla_x \phi_x^{\delta,t} (y)|_\infty &\leq
	\parallel \nabla_x g \parallel_\infty \int_0^t \txte^{\parallel \nabla_yg \parallel_\infty 
	(t-s)}~\rmd s + \txte^{\parallel \nabla_y g\parallel_\infty t}.
	\end{align*}	
	From this we conclude that
	\begin{equation*}
	\sup_{x,y,\omega,\delta}|\nabla_x \phi_x^{\delta,t} (y)|_\infty \leq K 
	\txte^{\parallel \nabla_y g\parallel_\infty t},
	\end{equation*}
	where the constant 
	$$K:= 	\parallel \nabla_x g \parallel_\infty \int_0^\infty \txte^{-\parallel \nabla_yg 
	\parallel _\infty s}~\rmd s + 1$$
	 is independent of $t$.	
	Thus, the growth condition (\ref{eq: lem convergence of F_0 in the case gxy 1}) might hold if the 
	unperturbed flow $\phi_x^{0,t}$ has exponential decay of correlations $C(t;x) \leq  C \txte^{-\rho t}$, 
	for all $x \in \mathbb{R}^d$, with	$\rho \geq \parallel \nabla_yg \parallel_\infty$. This inequality describes precisely the boundary of what we might optimistically expect as possible decay rates for correlations and a further investigation is left as an open problem here.\\
		(ii) The centering condition (\ref{eq: centering condition with delta}) might seem a strong assumption 
	at first glance because it must be satisfied for all $\delta >0$ and $x$.  
	However, the parameter $\delta > 0$ has the effect of only ``streching'' the invariant density 
	$\rho_\infty^\delta(y;x)$,
	so that the function $b$ has to be simply some function which is in accordance with the symmetry 
	of the invariant densities. The condition can also be relaxed by allowing the operator $\mathcal{L}_2$ 
	to be perturbed as well. More precisely, assume that the function $b$ satisfies 
	\begin{equation*}
	\int_{\mathbb{T}^m} b(x,y)~\rmd\mu_x^0(y) = 0, \text{\quad for all } x \in \mathbb{R}^d.
	\end{equation*}
	We consider suitable perturbed vector fields $b^\delta$ satisfying the centering condition 
	(\ref{eq: centering condition in theorem}), for which additionally we have
	\begin{equation*}
	b^\delta \to b \text{\quad uniformly.}
	\end{equation*}
	For example, we can consider functions of the form 
	\begin{equation*}
	b^\delta(x,y):= b(x,y) - \int_{\mathbb{T}^m} b(x,z) \rho_\infty^\delta(z;x) dz
	\end{equation*}
	We then define the perturbed operators
	\begin{equation*}
	\mathcal{L}_2^\delta u := b^\delta \cdot \nabla_x u,
	\end{equation*}
	\begin{equation*}
	\mathcal{L}^{\epsilon,\delta} =\frac{1}{\epsilon^2} \mathcal{L}_{1}^\delta + 
	\frac{1}{\epsilon} \mathcal{L}_2^\delta + \mathcal{L}_3
	\end{equation*}
	and 
	\begin{equation*}
	\mathcal{L}^{0,\delta} f :=  (-\mathcal{P}^\delta \mathcal{L}_2^\delta 
	[\mathcal{L}_1^{\delta}]^{-1} \mathcal{L}_2^\delta \mathcal{P}^\delta 
	+ \mathcal{P}^\delta \mathcal{L}_3^\delta \mathcal{P}^\delta )f
	\end{equation*}
	and we can repeat the proof of Theorem \ref{thm: from Led to L00} to get the statement. 
\end{remark}
%
%

\section{Weakly-coupled systems}
\label{sec: weakly coup sys}

\subsection{Main result}
\label{sub: main results for weakly coupled systems}

To provide an intermediate alternative to the strong mixing assumption (see 
condition (\ref{eq: lem convergence of F_0 in the case gxy 1})), we are also consider a simpler case of 
so-called \emph{weakly-coupled} systems. These are systems with coupling occurring only in lower times 
scales and they are given by equation (\ref{eq: coupl in slower time scales 0DE 1 }). 
We also consider the corresponding stochastic version 
\begin{equation}\label{eq: coupl in slower time scales SDE 1 }
\begin{split}
\frac{\rmd x_\epsilon}{\rmd t} &= a(x_\epsilon, y_\epsilon) + \frac{1}{\epsilon} 
b(x_\epsilon, y_\epsilon),\text{\quad} x_\epsilon(0) = x_0,  \\
\frac{\rmd y_\epsilon}{\rmd t} &= \frac{1}{\epsilon^2} g( y_\epsilon) + \frac{1}{\epsilon} 
\left(h(x_\epsilon,y_\epsilon) + \sqrt{\delta} \frac{\rmd V}{\rmd t}\right) + r(x_\epsilon,y_\epsilon),
\text{\quad} y_\epsilon(0) = y_0.
\end{split}
\end{equation}
We are going to use now the assumptions (A1)-(A2), (A4)-(A5), and suitable centering
an correlation decay conditions but not (A6) to finally be able to prove Theorem~\ref{thm C}.
For any $\delta > 0$ we set 
\begin{align*}
\tilde{\mathcal{L}}_1^\delta &:= g(y) \cdot \nabla_y + \frac{1}{2}\delta I: \nabla_y\nabla_y , \\
\tilde{\mathcal{L}}_2 &:= b(x,y) \cdot \nabla_x + h(x,y) \cdot \nabla_y = 
\mathcal{L}_2^c + \mathcal{L}_2^{nc}, \\
\tilde{\mathcal{L}}_3&=  a(x,y) \cdot \nabla_x + r(x,y)\cdot \nabla_y,
\end{align*}
with the commutative part
$\mathcal{L}_2^c := b(x,y) \cdot \nabla_x$ and the remainder 
$\mathcal{L}_2^{nc} := h(x,y) \cdot \nabla_y$. 
The operator 
\begin{equation*}
\tilde{\mathcal{L}}^{\epsilon, \delta} = \frac{1}{\epsilon^2}\tilde{\mathcal{L}}_1^\delta 
+ \frac{1}{\epsilon}\tilde{\mathcal{L}}_2 + \tilde{\mathcal{L}}_3
\end{equation*}
is the backward Kolmogorov operator associated with the SDE (\ref{eq: coupl in slower time scales SDE 1 }). 
Assume that the centering condition (\ref{eq: centering condition with delta}) is satisfied.
Consider the perturbation expansion 
\begin{equation} \label{eq: perturbation ansatz for wc}
u^{\epsilon, \delta} = u_0^\delta + \epsilon u_1^\delta + \epsilon^2 u_2^\delta + \cdots
\end{equation}
which we substitute into the backward Kolmogorov equation 
 \begin{equation}\label{eq: bachward kolm eq with eps wc}
\frac{\rmd u^{\epsilon,\delta}}{\rmd t} = \tilde{\mathcal{L}}^{\epsilon,\delta}u^{\epsilon,\delta} 
:= \Big(\frac{1}{\epsilon^2}\tilde{\mathcal{L}}_1^\delta + \frac{1}{\epsilon}\tilde{\mathcal{L}}_2 
+ \tilde{\mathcal{L}}_3 \Big)u^{\epsilon,\delta}.
\end{equation}
Via the perturbation analysis given in Section \ref{sec: perturbation anal for wcsys app} of the 
Appendix, we arrive at the following equation for the leading order $u_0^\delta$
 \begin{equation}\label{eq: eq for uo wc}
 \frac{\rmd u_0^\delta}{\rmd t} = \tilde{F}^{\delta} \cdot \nabla_x u_0^\delta 
+ \frac{1}{2} A^\delta(x)A^\delta(x)^\top : \nabla_x\nabla_x u_0^\delta.
 \end{equation}
Here the drift coefficient in the homogenized equation  
(\ref{eq: apr eq for barrho fp 2 }) now changes to
\begin{equation}
\label{eq: coupl in slower time scales Fdelta }
\tilde{F}^\delta(x):= \int_{\mathbb{T}^m} \Big(a(x,y) + \nabla_x \Phi^\delta(y;x) b(x,y) 
+ \nabla_y\Phi^\delta(y;x)h(x,y) \Big) \rho_\infty^\delta(y;x) ~\rmd y
\end{equation}
and the diffusion coefficient $A^\delta(x)$ remains unchanged 
\begin{equation}
\begin{split}
A^\delta(x)A^\delta(x)^\top  &=  \frac{1}{2} \Big( A_0^\delta(x) + A_0^\delta(x)^\top  \Big), \\
A_0^\delta(x) &= 2 \int_{\mathbb{T}^m} \Big(b(x,y) \otimes \Phi^\delta(y;x)\Big) 
\rho_\infty^\delta(y;x) ~\rmd y.
\end{split}
\end{equation}
Note that (see for example \cite[Result 11.8]{PavliotisStuart}) the solution $\Phi^\delta$ of 
the cell problem admits the representation formula 
\begin{equation*}
\Phi^\delta (y;x) = \int_0^\infty \mathbb{E} \Big[ b(x, \phi^{\delta,t}(y)) \Big] ~\rmd t,
\end{equation*}
where the stochastic process $\phi^{\delta,t}(y)$ satisfies equation (\ref{eq: solution oper phi without x})
and the term $\mathbb{E} [ b(x, \phi^{\delta,t}(y)) ]$ decays exponentially fast as $t\to \infty$ 
(see \cite[Theorem 6.16]{PavliotisStuart}). The above considerations allow us to repeat the arguments from 
the previous sections and we get following theorem.

\begin{thm} \textbf{(Convergence of the slow process for weakly-coupled systems)}
\label{thm: coupled systems} \\
Assume (A1)-(A2) and that the unperturbed flow $\phi^{0,t}$ has summable stochastically stable decay 
of correlations $C(t)$ in the sense of Definitions~\ref{defin: doc} and \ref{defin: stochastic stability}. Furthermore, 
assume that the centering condition (\ref{eq: centering condition with delta})
is satisfied and define the operator $\tilde{\mathcal{L}}^{0,\delta}$ on $C^2_\txtc(\mathbb{R}^d)$ by 
\begin{equation}
\tilde{\mathcal{L}}^{0,\delta}u := \tilde{F}^\delta \cdot \nabla_xu + \frac{1}{2}A^\delta(x)A^\delta(x)^\top 
:\nabla_x\nabla_xu.
\end{equation}
In the case that $h$ does not vanish everywhere, we assume additionally that the centering 
condition~\eqref{eq: lem convergence of F_0 in the case gxy 2} and the 
growth condition \eqref{eq: growth cond in coupl sys thm} hold.
	
Then following statements are true: 
\begin{enumerate}
	\item[\textbf{(i)}] There exist vector fields $\tilde{F}^0(x)$ and $A^0(x)$ such that 
	\begin{equation}
	\tilde{F}^\delta \to \tilde{F}^0, \text{\quad}A^\delta \to A^0, \text{\quad uniformly in $x$ as $\delta \to 0$}, 
	\end{equation}
	where  $A^0$ is explicitly given by (\ref{eq: thm weakly coupled systems A0}) 
	and the vector field $\tilde{F}^0$ is given by (\ref{eq: thm weakly coupled systems F0}).
	\item[\textbf{(ii)}]
	For every $f \in C^2_\txtc(\mathbb{R}^d)$  
	\begin{equation}\label{eq: led to loo weakly coup} 
	\lim_{\delta \to 0}\tilde{\mathcal{L}}^{0, \delta} f = \tilde{\mathcal{L}}^{0,0} f \text{\quad uniformly},
	\end{equation}
	where the operator $\tilde{\mathcal{L}}^{0,0}$ is defined by 
	\begin{equation}
	\tilde{\mathcal{L}}^{0,0} u := \tilde{F}^0 \cdot \nabla_xu + \frac{1}{2}A^0(x)A^0(x)^\top :\nabla_x\nabla_xu,
	\end{equation}
	and $\bar{\tilde{\mathcal{L}}}^{0,0}$ generates the strongly continuous semigroup $T(t)^{0,0}$ on $X$. 
	\item[\textbf{(iii)}] Let $T^{\epsilon,\delta}$ be the semigroup on $\hat{C}(\mathbb{R}^d\times \mathbb{T}^m)$ 
	generated by $\bar{\mathcal{L}}^{\epsilon,\delta}$.
	Then for every $f \in C_0(\mathbb{R}^d)$ and every sequence $\{\epsilon_k\}_{k\geq 0}$ with $\epsilon_k \to 0$ 
	for $k \to \infty$, there exists a subsequence $ \{ \epsilon_{k_m} \}_{m \geq 0}$ such that 
	\begin{equation}
	\label{eq: led to loo couple sys 2 } 
	\lim_{m \to \infty} \sup_{0 \leq t \leq \hat{T}} \parallel T^{\epsilon_{k_m},0}(t)f  - 
	T^{0,0}(t)f \parallel_\infty = 0.
	\end{equation}
	\item[\textbf{(iv)}]
	For $\epsilon > 0$ let $(X^\epsilon(t;\xi ,\eta), Y^\epsilon(t;\xi,\eta))$ be the solution of the 
	ODE (\ref{eq: coupl in slower time scales 0DE 1 }). 
	Then for every initial condition $f \in \hat{C}(\mathbb{R}^d)$ and every sequence $\{\epsilon_k\}_{k\geq 0}$ 
	with $\epsilon_k \to 0$ for $k \to \infty$, there exists a subsequence $ \{ \epsilon_{k_m} \}_{m \geq 0}$ such 
	that 
	\begin{equation*}
f(X^{\epsilon_{k_m}}(t; \xi , \eta)) \to T^{0,0}(t) f(\xi), 
\text{\quad uniformly in $\xi \in \mathbb{R}^d$, $\eta \in \Omega$ 
	and $t\in [0,\hat{T}]$}. 
\end{equation*}
	\end{enumerate}
\end{thm}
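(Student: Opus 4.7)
\textbf{Proof proposal for Theorem~\ref{thm C}.} The plan is to mirror the regularize--split--pass-to-the-limit strategy of Theorems~\ref{thm A} and~\ref{thm B}, adapted to the weakly-coupled structure in which the leading fast flow $\phi^{0,t}$ is \emph{independent of $x$}. First I would regularize~\eqref{eq: coupl in slower time scales 0DE 1 } by adding $\epsilon^{-1}\sqrt{\delta}\,\rmd V/\rmd t$ to the fast equation, producing the system~\eqref{eq: coupl in slower time scales SDE 1 } with backward Kolmogorov generator $\tilde{\mathcal{L}}^{\epsilon,\delta} = \epsilon^{-2}\tilde{\mathcal{L}}_1^\delta + \epsilon^{-1}(\mathcal{L}_2^c + \mathcal{L}_2^{nc}) + \tilde{\mathcal{L}}_3$. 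A perturbation expansion in powers of $\epsilon$, exactly as in Section~\ref{sec: basic results for stochastic systems}, yields the leading-order operator $\tilde{\mathcal{L}}^{0,\delta}$ in~\eqref{eq: eq for uo wc}, whose coefficients $\tilde{F}^\delta$ and $A^\delta$ are expressible via the cell-problem solution $\Phi^\delta(y;x)=\int_0^\infty \mathbb{E}[b(x,\phi^{\delta,t}(y))]\,\rmd t$. The centering condition~\eqref{eq: centering condition with delta} combined with summable, stochastically stable decay of correlations makes this representation well-defined also in the limit $\delta=0$.

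The next step is to establish $\tilde{F}^\delta \to \tilde{F}^0$ and $A^\delta \to A^0$ uniformly in $x\in\mathbb{R}^d$ as $\delta\to 0$. The diffusion $A^\delta$ and the drift piece $\int \nabla_x\Phi^\delta(y;x)b(x,y)\,\rmd\mu_x^\delta(y)$ arise only from the commutative part $\mathcal{L}_2^c$; since $\phi^{\delta,t}$ does not depend on $x$ these are handled exactly as in Lemma~\ref{lem: convergence of the expressions}, combining Lemma~\ref{lem: cont dependence of the solution oper}, the centering condition, and dominated convergence under the summable correlation bound $C(t)$. The genuinely new term is the contribution from $\mathcal{L}_2^{nc}$, namely $\int \nabla_y\Phi^\delta(y;x)\,h(x,y)\,\rmd\mu_x^\delta(y)$. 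Differentiating the representation formula for $\Phi^\delta$ in $y$ and interchanging the differential with the expectation, this expression becomes a double time-integral whose integrand, after invoking ergodicity, involves $\mathbb{E}[\nabla_y b(x,\phi^{\delta,t}(\phi^{\delta,s}(y)))\,\nabla_y\phi^{\delta,t}(\phi^{\delta,s}(y))]\cdot h(x,\phi^{\delta,s}(y))$ averaged along the flow. Applying decay of correlations with H\"older observables $\nabla_y b$ and $\nabla_y\phi^{0,t}\,h$, and using the centering hypothesis~\eqref{eq: lem convergence of F_0 in the case gxy 2} so that DOC in the sense of Definition~\ref{defin: doc} is applicable, the inner integrand is controlled by $C(t)\,\|\nabla_y\phi^{0,t}(\cdot)h(x,\cdot)\|_\alpha$. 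The growth assumption~\eqref{eq: growth cond in coupl sys thm} then furnishes an $x$-uniform integrable majorant, while stochastic stability plus Lemma~\ref{lem: cont dependence of the solution oper} supply the pointwise limit. Dominated convergence delivers the formula~\eqref{eq: thm weakly coupled systems F0} for $\tilde{F}^0$.

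With convergence of $\tilde{F}^\delta$ and $A^\delta$ in hand, a Trotter--Kato argument as in Theorem~\ref{thm: from Led to L00} produces the limiting strongly continuous semigroup $T^{0,0}$ on $C_0(\mathbb{R}^d)$ generated by $\tilde{\mathcal{L}}^{0,0}$, and the double-limit procedure (first $\delta\to 0$ at fixed $\epsilon_k$, then a diagonal subsequence extracting $\epsilon_{k_m}$) gives the uniform convergence of $T^{\epsilon_{k_m},0}(t)f$ to $T^{0,0}(t)f$, hence of $f(X^{\epsilon_{k_m}}(t;\xi,\eta))$ to $T^{0,0}(t)f(\xi)$. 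I expect the main obstacle to be precisely the $\mathcal{L}_2^{nc}$ term: it is the genuine novelty compared to the skew-product case, it forces the appearance of the Jacobian $\nabla_y\phi^{0,t}$ under the time integral, and only the combination of centering~\eqref{eq: lem convergence of F_0 in the case gxy 2} with the growth bound~\eqref{eq: growth cond in coupl sys thm} (plus stochastic stability) allows dominated convergence at the $\delta\to 0$ stage. When $h\equiv 0$ this obstacle disappears and summable DOC alone suffices, which accounts for the dichotomy in the hypotheses of the theorem.
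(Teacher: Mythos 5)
Your proposal follows the same route as the paper: derive $\tilde{\mathcal{L}}^{0,\delta}$ via the perturbation expansion, note that $A^\delta$ and the commutative drift piece are handled exactly as in Lemma~\ref{lem: convergence of the expressions}, treat the new $\int \nabla_y\Phi^\delta(y;x)\,h(x,y)\,\rmd\mu_x^\delta(y)$ term by the argument of Lemma~\ref{lem: convergence of F_0 in the case gxy} using the centering condition~\eqref{eq: lem convergence of F_0 in the case gxy 2} and the growth bound~\eqref{eq: growth cond in coupl sys thm}, and then conclude via the Trotter--Kato/diagonal-subsequence machinery of Theorem~\ref{thm: from Led to L00} and Corollary~\ref{cor: after thm Led to Loo}. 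This is precisely the paper's proof (which is stated only as a reference back to those lemmas), and your identification of the $\mathcal{L}_2^{nc}$ contribution as the sole genuinely new ingredient is correct.
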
 

\begin{proof}
	The arguments needed for the proof are identical with those given in 
	Sections \ref{sec: coupled systems} and \ref{sec: convergence of Lod}. Thus we 
	omit their exact repetition. We only want to note that in the case that $h \equiv 0$ the 
	term $\nabla_y\Phi^\delta(x,y)h(x,y)$ in (\ref{eq: coupl in slower time scales Fdelta }) 
	vanishes, so that we can repeat the arguments from Lemma \ref{lem: convergence of the expressions} 
	to get the first statement. In the general case that $h$ does not vanish everywhere, the 
	term $\nabla_y\Phi^\delta(x,y)h(x,y)$ in equation (\ref{eq: coupl in slower time scales Fdelta }) 
	cannot be neglected. Thus we need to pose the additional assumptions (\ref{eq: lem convergence 
	of F_0 in the case gxy 2}) and  (\ref{eq: growth cond in coupl sys thm}) (which ensure especially that the expression
\begin{equation*}
\int_0^\infty \int_{\mathbb{T}^m} \mathbb{E} \Big[\nabla_yb(x, \phi_x^{\delta,t}(y)) \nabla_y \phi_x^{\delta,t}(y) \Big] h(x,y) \rho^\delta_\infty(y;x) dy dt <\infty
\end{equation*} is well-defined) and then we proceed 
	as in Lemma \ref{lem: convergence of F_0 in the case gxy} to get the first statement also for 
	this case. Finally we note that for the second statement we repeat the arguments from 
	Theorem \ref{thm B}, for the third statement we need to repeat the proof of 
	Theorem \ref{thm: from Led to L00} and for the last statement see the proof of 
	Corollary \ref{cor: after thm Led to Loo}.
\end{proof}
As we can see from the formulation of Theorem~\eqref{thm: coupled systems}, we do not have to assume any additional growth condition for $\phi^{0,t}$ in case $h$ in~\eqref{eq: coupl in slower time scales SDE 1 } vanishes. If $h \neq 0$, the assumed growth condition~\eqref{eq: growth cond in coupl sys thm} for the weakly-coupled system is clearly weaker than growth condition~\eqref{eq: lem convergence of F_0 in the case gxy 1} for the more general case: in~\eqref{eq: lem convergence of F_0 in the case gxy 1}, the integrability has to hold uniformly over all $x \in  \R^d$, whereas $\phi^{0,t}$ does not depend on $x$ in the weakly-coupled situation, hence the simplification to~\eqref{eq: growth cond in coupl sys thm}.

\subsection{Numerical example}
\label{sec: numerical example}

As an application of the previous Section~\ref{sub: main results for weakly coupled systems}, we 
consider a weakly-coupled system on $\mathbb{R}\times \mathbb{R}^3$ with chaotic fast dynamics 
on the Lorenz attractor. Let us recall that the classical Lorenz equations are given by the 
three-dimensional ODE system
\begin{equation}\label{eq: lorenz eq 1}
\begin{split}
\frac{\rmd y_1}{\rmd t} &= s(y_2-y_1),  \\
\frac{\rmd y_2}{\rmd t} &= \rho y_1 - y_2 -y_1y_3,   \\
\frac{\rmd y_3}{\rmd t} &= y_1y_2 - \beta y_3,
\end{split}
\end{equation}
with the parameters $s, \rho , \beta >0$, where, in particular, $s$ is called the \emph{Prandtl 
number} and $\rho$ is called the \emph{Rayleigh number}. For the standard values $s = 10, 
\rho = 28, \beta = 8/3$, the equations are ergodic with invariant measure $\mu$ supported on the 
Lorenz attractor $\Omega$. We now consider, motivated by \cite[Section 11.7.2]{PavliotisStuart} 
and \cite[Section 6.4]{35}, the following weakly-coupled systems on $\mathbb{R}\times\mathbb{R}^3$:
\begin{equation}
\label{eq: model problem 1 lorenz 2 wco}
\begin{split}
\frac{\rmd X^{\epsilon,\delta}}{\rmd t} &= -X^{\epsilon,\delta} + \frac{1}{\epsilon}\frac{4}{90} 
Y^{\epsilon,\delta}_2 \\
\frac{\rmd Y^{\epsilon,\delta}_1}{\rmd t} &= \frac{10 }{\epsilon^2}(Y^{\epsilon,\delta}_2-
Y^{\epsilon,\delta}_1) + X^{\epsilon,\delta}Y_3^{\epsilon,\delta} +\delta \frac{\rmd U}{\rmd t}\\
\frac{\rmd Y^{\epsilon,\delta}_2}{\rmd t} &= \frac{1}{\epsilon^2} (28Y^{\epsilon,\delta}_1 - 
Y^{\epsilon,\delta}_2 -Y^{\epsilon,\delta}_1Y^{\epsilon,\delta}_3)  - X^{\epsilon,\delta}+
\delta \frac{\rmd V}{\rmd t} \\
\frac{\rmd Y^{\epsilon,\delta}_3}{\rmd t} &=\frac{1}{\epsilon^2}( Y^{\epsilon,\delta}_1
Y^{\epsilon,\delta}_2 - \frac{8}{3} Y^{\epsilon,\delta}_3) + X^{\epsilon,\delta} 
Y_1^{\epsilon,\delta} Y_2^{\epsilon,\delta} +\delta \frac{\rmd W}{\rmd t}.
\end{split}
\end{equation}
In Figure \ref{fig: 8} sample paths of the process $X^{\epsilon,\delta}$ 
solving (\ref{eq: model problem 1 lorenz 2 wco}) for different values of $\epsilon$ and 
$\delta$ are shown. These paths illustrate that the deterministic flow displays 
stochastic-looking/chaotic oscillations but one does really need to look at the limiting 
behaviour as $\epsilon\ra 0$ to fail to see the visual difference between a deterministic
and a stochastic process.

\begin{figure}[htbp]
	\centering
	\begin{overpic}[width=0.45\textwidth]{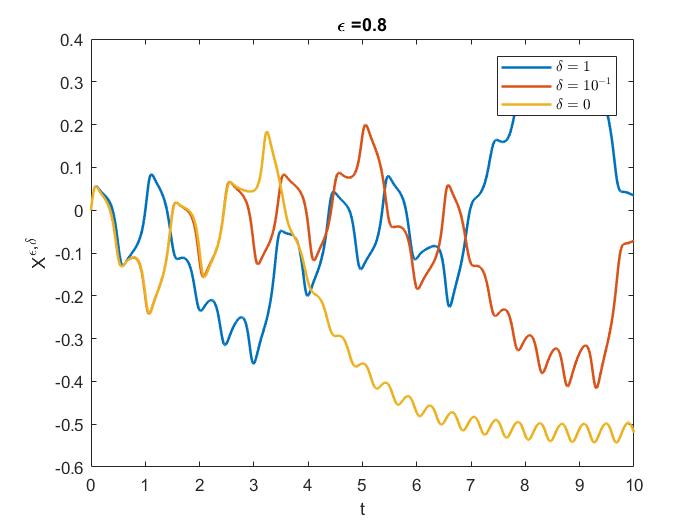}
	\end{overpic}
	\begin{overpic}[width=0.45\textwidth]{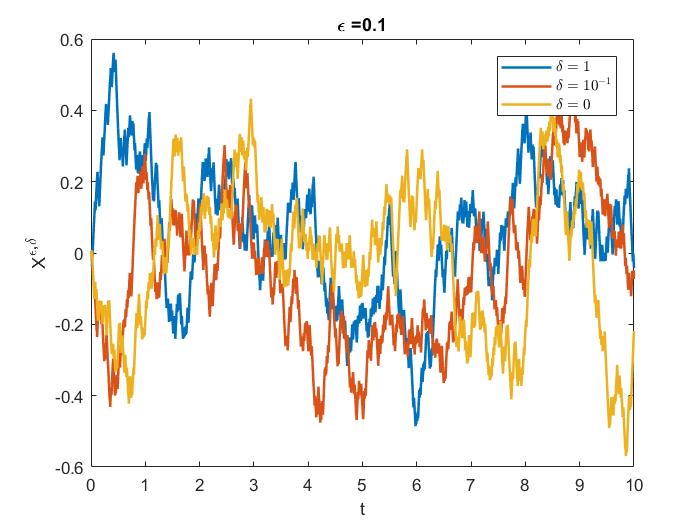}
\end{overpic}
	\caption{\label{fig: 8}Sample paths of the process $X^{\epsilon, \delta}$ satisfying 
	equation (\ref{eq: model problem 1 lorenz 2 wco}), with the initial 
	condition $[x^0, y_1^0, y_2^0, y_3^0]^\top =[0,13.93,20.06,26.87]^\top$, for different 
	values of $\epsilon$ and $\delta$.}
\end{figure}

The fast subsystem has the ergodic measure $\mu$ supported on the Lorenz attractor $\Omega$.
Let $Q \subset \mathbb{R}^3$ be a sufficiently large cube containing $ \Omega$. By identifying 
the opposite sides of the cube and rescaling the coordinates we can assume, without loss of 
generality, that $Q = \mathbb{T}^3$ is the torus, so that the theory from the previous sections 
can be applied. We note further that it has been already verified numerically in \cite{35} that 
the $y_2$ coordinate has zero average with respect to $\mu$ and as a consequence that the centering 
condition (\ref{eq: centering cond for d=0}) is satisfied.
Theorem \ref{thm: coupled systems} states that for every $f \in C_0(\mathbb{R})$ and every 
sequence $\{\epsilon_k\}_{k\geq 0}$  with $\epsilon_k \to 0$ for $k \to \infty$ there exists a 
subsequence $ \{ \epsilon_{k_m} \}_{m \geq 0}$ such that
\begin{equation} \label{eq:  example weakly coupled systems convergence of Xe0}
\mathbb{E}^{\mu} [f(X^{\epsilon_{k_m},0}(t))]\to \mathbb{E}[f(X(t))]
\text{\quad as $m \to \infty$ uniformly in $t \in [0,\hat{T}]$},
\end{equation}
where the process $X$ solves the SDE
\begin{equation}\label{eq:  example weakly coupled systems SDE for X}
\frac{\rmd X}{\rmd t} = - X + \sigma \frac{\rmd W}{\rmd t}, \text{\quad } X(0) = \xi.
\end{equation}
Note that equation (\ref{eq:  example weakly coupled systems SDE for X}) describes an 
Ornstein-Uhlenbeck process  which has the unique solution given by
\begin{equation*}
X_t = \txte^{-t} \xi + \sigma \txte^{-t} \int_0^t \txte^{ \tau} ~\rmd W_\tau. 
\end{equation*}
In general we know that for a square integrable function $f$ on $[0,T]$, the random variable 
$\int_0^T f(t) ~\rmd W_t$ is normally distributed with variance $\int_0^T f(t)^2 ~\rmd t$ and 
from this fact it is easy to see that $X_t$ is normally distributed with
\begin{equation*}
X_t \sim N(\txte^{- t} \xi, \frac{\sigma^2}{2} \txte^{-2 t}(\txte^{2t} - 1)).
\end{equation*}
The exact value of $\sigma$ is given by formula (\ref{eq: thm weakly coupled systems F0}). In the 
following we use the estimate $\sigma^2 \simeq 0.126$ calculated in \cite{35}.

Furthermore, since $C_0(\mathbb{R}) \subset C_b(\mathbb{R})$, equation 
(\ref{eq:  example weakly coupled systems convergence of Xe0}) is slightly 
weaker than uniform convergence in distribution of the 
process $X^{\epsilon_{k_m},0}(t)$ towards $X(t)$. 
The following Figures \ref{fig: 9} and \ref{fig: 10} verify 
equation (\ref{eq:  example weakly coupled systems convergence of Xe0}) numerically.

\begin{figure}[htbp]
	\centering
	\begin{overpic}[width=0.65\textwidth]{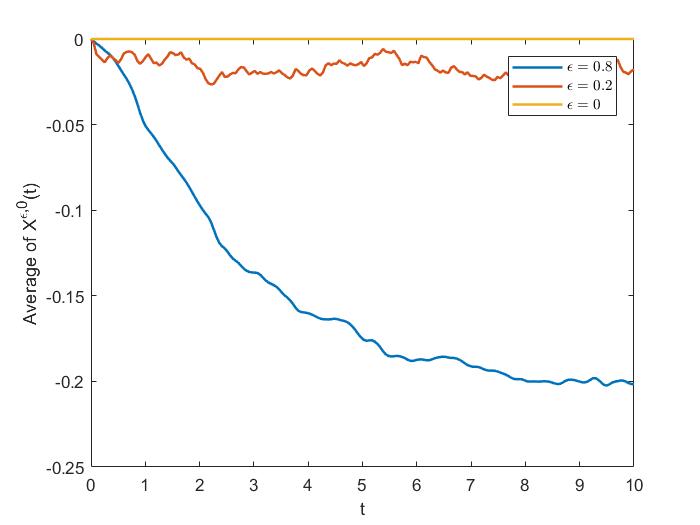}
	\end{overpic}
	\caption{\label{fig: 9}Average of the process $X^{\epsilon, 0}(t)$ satisfying equation 
	(\ref{eq: model problem 1 lorenz 2 wco}) for $\epsilon = 0.8$ and $\epsilon =0.2$ and 
	theoretical average for $\epsilon = 0$, i.e. for the limiting process $X(t)$ 
	satisfying (\ref{eq:  example weakly coupled systems SDE for X}) with the initial 
	condition $\xi=0$. The averages are taken over 1000 different realizations on the Lorenz 
	attractor. We observe that the average starts to converge towards the theoretical value but
	one really has to go to small $\epsilon$ to see the effect.}
\end{figure}

\begin{figure}[htbp]
	\centering
	\begin{overpic}[width=0.65\textwidth]{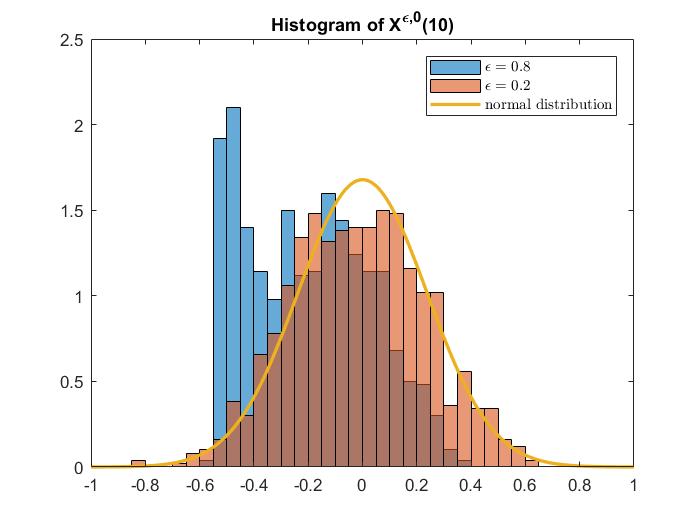}
	\end{overpic}
	\caption{\label{fig: 10}Histogram of the process $X^{\epsilon, 0}(10)$ satisfying 
	equation (\ref{eq: model problem 1 lorenz 2 wco}) for $\epsilon = 0.8$ and 
	$\epsilon =0.2$ and distribution of the limiting process $X(10)$, 
	solving (\ref{eq:  example weakly coupled systems SDE for X}) with the initial condition $\xi =0$.
	We used ensembles of 1000 realizations.}
\end{figure}

Figure~\ref{fig: 10} shows that we indeed obtain convergence of the deterministic fast-slow 
ODE to its limiting reduced flow SDE. We visualized this by capturing the same stationary 
distribution with both equations. This is the practical reduction effect one is looking for
since now the chaotic fast degrees of freedom are encoded in a low-dimensional SDE.

\section{Conclusion and outlook}
\label{sec: conclusion and outlook}

In this paper we have extended results on deterministic homogenization of 
fast-slow ODEs to the case where 
coupling of the fast and slow variables is part of the model. Our main strategy was to add small stochastic noise 
to the fast subsystem and then take two independent limits --- namely the zero-noise limit and 
the limit $\epsilon \to 0$ ---, which enabled us to use results and functional-analytical methods 
from stochastic systems. For generally coupled systems, we have succeeded to prove a certain 
weak form of convergence of the slow process, similarly to uniform convergence of the first moments, 
requiring strong mixing assumptions on the fast flow. However, for the intermediate case of 
weakly-coupled systems, the mixing assumptions are relatively mild. Our method also directly yields explicit 
expressions for the drift and diffusion coefficients of the limiting SDE. 

This paper can be seen as one of the first steps to understand homogenization of coupled fast-slow systems in continuous time and 
leaves open several relevant questions for further research. One task is to find, numerically and/or 
analytically, more direct examples from applications for which the strong mixing condition 
\eqref{eq: growth cond in coupl sys thm} is satisfied. Another goal will be to find alternative 
representations of the drift and diffusion coefficients of the limiting diffusion, such that 
potentially weaker or even no mixing assumptions are required, as seen in \cite{13,12}. In addition to that, it will be crucial to 
study the behavior of the higher moments of the slow process in order to prove weak convergence of the respective measures in $C([0,T],\mathbb{R}^d)$. 

\medskip

\textbf{Acknowledgements:} M.E.~and C.K.~gratefully acknowledge support by the DFG via the SFB TR 109
Discretization in Geometry and Dynamics. M.E. has also been supported by Germany's Excellence Strategy -- The Berlin Mathematics Research Center MATH+ (EXC-2046/1, project ID: 390685689). C.K.~acknowledges partial support by a Lichtenberg Professorship of 
the VolkswagenFoundation and partial support via the TiPES project funded by the European Unions Horizon 2020 research and innovation programme under grant agreement No. 820970. M.G. and C.K. acknowledge support
via the TUM International Graduate School of Science and Engineering via the project SEND.

\bibliographystyle{plain}
\bibliography{mybib}

\newpage
\appendix

\section{Convergence of the semigroup $T^{\epsilon,\delta}$ as $\epsilon \to 0$}

Let $X$ be a Banach space. 

\begin{defn}
\label{defin: ergodic contraction semigroup} 
Let $\{S(t)\}_{t \geq 0}$ be a strongly continuous semigroup on 
$X$ with infinitesimal generator $L$. $\{S(t)\}_{t \geq 0}$ is called an 
\emph{ergodic semigroup} if 
	\begin{equation} 
	\label{eq: ergodic contra semigroup property}
	\lim_{\lambda \to 0} \lambda \int_0^\infty \txte^{- \lambda t} 
	S(t) f~\rmd t = Pf \text{ \quad exists for all } f \in X.
	\end{equation}
We call $P$ the projection corresponding to the semigroup.
\end{defn}

\begin{remark}
\label{rem: suffic condition for ergodic contract semigrou}
A sufficient condition for (\ref{eq: ergodic contra semigroup property}) to 
hold is that $\lim_{t \to \infty} S(t) f$ exists for every $f\in X$ and then 
we also have that 
\begin{equation}
	Pf = \lim_{t \to \infty} S(t)f, \text{\quad } f\in X.
\end{equation}
Using semigroup notation we can rewrite the last equation as
\begin{equation}
\txte^{Lt}\cdot \to P \cdot \text{\quad as } t \to \infty.
\end{equation}
See also \cite[Remark 7.5]{30}.
\end{remark}

\begin{lem} \label{lem: suitable banach space L}
For any fixed $\delta > 0$ consider the operators $\mathcal{L}^{\epsilon,\delta}$ 
and $\mathcal{L}_1^\delta$ defined as in (\ref{eq: bachward kolm eq with eps}) on 
$C^2_\txtc(\mathbb{R}^d \times \mathbb{T}^m)$. 
Let $X:= (C_0(\mathbb{R}^d \times \mathbb{T}^m), \parallel \cdot \parallel_\infty)$ be 
the Banach space of continuous functions, which vanish for $\parallel x \parallel_2\to \infty$. 
Then the following statements are true\\
	\textbf{(i)} $\mathcal{L}^{\epsilon,\delta}$ generates a strongly continuous contraction 
	semigroup  $(T^{\epsilon,\delta}(t))_{t \geq 0}$ on $X$. \\
	\textbf{(ii)} $\mathcal{L}_1^\delta$ generates an ergodic, strongly continuous contraction 
	semigroup $(S^\delta(t))_{t \geq 0}$ on $X$. 
\end{lem}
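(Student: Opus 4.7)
The plan is to construct both semigroups probabilistically as transition semigroups of the underlying stochastic processes and then identify the generators via Itô's formula; all the functional-analytic properties asked for (contraction, strong continuity, Feller property, ergodicity) would then be deduced from properties of the underlying SDE/Markov processes together with the boundedness hypotheses in (A1)--(A2).

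For part (i), under (A1)--(A2) the SDE \eqref{eq: fast slow SDE} has a unique global strong solution $Z^{\epsilon,\delta}_t = (x^{\epsilon,\delta}_t, y^{\epsilon,\delta}_t)$ by standard theory, so one defines $T^{\epsilon,\delta}(t) f(z) := \mathbb{E}[f(Z^{\epsilon,\delta}_t) \mid Z^{\epsilon,\delta}_0 = z]$. The contraction property $\|T^{\epsilon,\delta}(t)f\|_\infty \leq \|f\|_\infty$ is immediate from Jensen's inequality. For strong continuity in the sup-norm, note that any $f \in C_0(\mathbb{R}^d \times \mathbb{T}^m)$ is uniformly continuous on its entire domain (by the standard fact about $C_0$ on a locally compact space), so with modulus of continuity $\omega_f$ one has $|T^{\epsilon,\delta}(t)f(z) - f(z)| \leq \mathbb{E}[\omega_f(d(Z^{\epsilon,\delta}_t,z))]$, and the right-hand side tends to zero uniformly in $z$ because the coefficients $a, b, g$ are bounded and hence $\sup_z \mathbb{E}[d(Z^{\epsilon,\delta}_t,z)] \to 0$ as $t \to 0$. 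For the Feller ($C_0$-preservation) property, the crucial observation is that the slow variable satisfies the pathwise bound $|x^{\epsilon,\delta}_t - x| \leq (\|a\|_\infty + \epsilon^{-1}\|b\|_\infty)\, t$ deterministically, so if $f$ vanishes outside $\{\|x\| \leq R\} \times \mathbb{T}^m$ then $T^{\epsilon,\delta}(t)f$ vanishes outside $\{\|x\| \leq R + C_{t,\epsilon}\} \times \mathbb{T}^m$; an approximation argument then extends this to all $f \in C_0$. Applying Itô's formula to $f \in C^2_\txtc$ identifies the generator on this core as $\mathcal{L}^{\epsilon,\delta}$, whose closure is the desired infinitesimal generator.

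For part (ii), the operator $\mathcal{L}_1^\delta$ acts only in the $y$-variable, treating $x$ as a frozen parameter, so the natural candidate is $S^\delta(t) f(x,y) := \mathbb{E}[f(x,\phi_x^{\delta,t}(y))]$ where $\phi_x^{\delta,t}$ is given by \eqref{eq: phi_xi}. The Feller property is here trivial because $x$ is preserved along the dynamics, and contraction, strong continuity, and identification of the generator proceed exactly as above but more simply, since the torus is compact and the relevant uniform estimates are immediate. Ergodicity in the sense of Definition \ref{defin: ergodic contraction semigroup} reduces to showing that $S^\delta(t) f \to \mathcal{P}^\delta f$ in $X$ as $t \to \infty$ for each $f \in X$, with $\mathcal{P}^\delta$ given by \eqref{eq: Projection Pd}; this follows from the uniform ellipticity of $\mathcal{L}_1^\delta$ on the compact torus $\mathbb{T}^m$, which (by classical parabolic regularity/Doeblin-type arguments) yields a unique smooth invariant density $\rho_\infty^\delta(\cdot;x)$ and exponential convergence $\|S^\delta(t)f(x,\cdot) - \mathcal{P}^\delta f(x,\cdot)\|_\infty \leq C(x) e^{-\rho(x) t}\|f(x,\cdot)\|_\infty$ for each fixed $x$. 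One then upgrades this to convergence in the $X$-norm by combining (a) the uniformity of the rate over $x$ in a compact set, coming from smooth parameter dependence of the coefficients $g(x,\cdot)$, and (b) the $C_0$ behaviour in $x$, which lets one cut off the noncompact tail uniformly.

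The main obstacle, I expect, is precisely this transfer of pointwise-in-$x$ properties to statements uniform in $x$ on the full Banach space $X$: the parabolic/ergodic estimates in (ii) are classical per fixed $x$ but need to be made uniform over $x$ for convergence in $\|\cdot\|_\infty$, and in (i) the lack of diffusion in the slow direction means the Feller property relies entirely on the boundedness of the drifts rather than on any smoothing in $x$. Both points are handled by the same underlying input, namely the global boundedness of $a,b,g$ (and their derivatives) together with the compactness of $\mathbb{T}^m$, but checking them carefully is where the work lies. Once (i) and (ii) are in hand, standard Hille--Yosida/Lumer--Phillips theory gives closedness of the generators and identifies them with $\overline{\mathcal{L}^{\epsilon,\delta}}$ and $\overline{\mathcal{L}_1^\delta}$ respectively on their natural domains.
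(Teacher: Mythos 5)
Your proposal is correct and follows essentially the same route as the paper: both construct $T^{\epsilon,\delta}$ and $S^\delta$ as transition semigroups of the underlying SDEs, verify the Feller, contraction and strong-continuity properties from the global boundedness of $a,b,g$, identify the generator on $C^2_\txtc$ via It\^{o}'s formula, and obtain ergodicity of $S^\delta$ from the exponential convergence to $\rho^\delta_\infty(\cdot;x)$ furnished by uniform ellipticity on the compact torus, with uniformity in $x$ coming from the uniform coefficient bounds. The only (immaterial) differences are that you prove strong continuity via a modulus-of-continuity estimate for general $f\in C_0$ while the paper uses the It\^{o} bound on the core $C^2_\txtc$ plus density, and you argue $C_0$-preservation via the finite propagation speed of the slow variable rather than dominated convergence.
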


\begin{proof}
	(i) Let $\psi_t(x,y)$ denote the solution map of the SDE corresponding to the generator 
	$\mathcal{L}^{\epsilon,\delta}$. For $f\in X$ define 
	\begin{equation*}
	T^{\epsilon,\delta}(t) f(x,y) := \mathbb{E} [f(\psi_t(x,y)) ].
	\end{equation*} 
	Note that due to our smoothness assumptions on $a,b,g$, $\phi^\epsilon_t(x,y)$ is continuous 
	with respect to the initial condition $(x,y)$. 
	We know check that: \\
	\textbf{(i-a)} 
	\begin{equation*}
	T^{\epsilon,\delta}(t): X \to X.
	\end{equation*}
	To see this, we first note that if $(x,y) \to (x_0,y_0)$ in $\mathbb{R}^d \times \mathbb{T}^m$, 
	then $\psi_t(x,y) \to \psi_t(x_0,y_0) $ which implies due to the dominated convergence theorem, 
	using that $f$ is bounded, that
	\begin{equation*}
	|(T^{\epsilon,\delta}(t) f)(x,y) - (T^{\epsilon,\delta} f )(x_0,y_0)| = \Big|\mathbb{E} 
	\Big(f(\psi_t(x,y)) - f(\psi_t(x_0,y_0)) \Big) \Big| \to 0.
	\end{equation*}
	Hence, $T^{\epsilon,\delta}(t) f  \in C(\mathbb{R}^d \times \mathbb{T}^m)$. 
	Similarly, using that $\psi_0(x,y) = (x,y)$ it is easy to see that for every fixed $y \in \mathbb{T}^m$ 
	and $t \in \mathbb{R}_+$ we have that $\parallel x \parallel_2 \to \infty \Rightarrow \parallel \psi_t(x,y)
	\parallel_2 \to \infty \Rightarrow f(\psi_t(x,y)) \to 0,$ which implies by dominated convergence that 
	\begin{equation*}
	(T^{\epsilon,\delta} f)(x,y) = \mathbb{E}[ f(\psi^t(x,y)) ] \to 0 \text{\quad as } 
	\parallel x \parallel_2\to \infty.
	\end{equation*}
	Hence, $T^{\epsilon,\delta}(t)f \in X$. \\
	\textbf{(i-b)}
	\begin{equation*}
	T^{\epsilon,\delta}(t+s)f = T^{\epsilon,\delta}(t) T^{\epsilon,\delta}(s)f, \text{\quad } T^\epsilon(0) 
	= \textnormal{Id}.
	\end{equation*}
	This follows immediately from the semigroup property of the solution map $\psi_t$. \\
	\textbf{(i-c)} 
	\begin{equation*}
	\lim_{t \to 0^+} \parallel T^{\epsilon,\delta}(t) f - f \parallel_\infty = 0.
	\end{equation*}
	Assume first for simplicity that $f\in C^2_\txtc(\mathbb{R}^d \times \mathbb{T}^m)$.
	Due to the It\^{o} formula we have that
	\begin{equation*}
	f(\psi_t(x,y))  = f(x,y) + \int_0^t \mathcal{L}^\epsilon f(\psi_s(x,y)) ds + M_t,
	\end{equation*}
	where $M_t$ is a martingale (which implies that $\mathbb{E} [M_t] = 0$).
	Thus, taking expectations we have
	\begin{equation*}
	|(T^{\epsilon,\delta}(t) f)(x,y) - f(x,y)|\leq \mathbb{E} [\int_0^t| 
	\mathcal{L}^{\epsilon,\delta} f(\psi_s(x,y))| ~\rmd s ].
	\end{equation*}
	Note that there exists a constant $C^\epsilon$, which depends only on the coefficients 
	of $\mathcal{L}^{\epsilon,\delta}$ such that 
	\begin{equation*}
	||\mathcal{L}^{\epsilon,\delta} f ||_\infty \leq C^\epsilon \underbrace{( \parallel f 
	\parallel_\infty + \parallel \nabla f \parallel _\infty + \parallel \nabla^2 f 
	\parallel_\infty) }_{< \infty, \text{ since }  f\in C^2_\txtc(\mathbb{R}^d \times \mathbb{T}^m) }
	< \infty.
	\end{equation*}
	Hence 
	\begin{equation*}
	\parallel(T^{\epsilon,\delta}(t) f)(x,y) - f(x,y)\parallel_\infty \leq t C^\epsilon \to 0 \text{ as } t \to 0^+.
	\end{equation*}
	Last equation implies strong continuity in $C^2_\txtc(\mathbb{R}^d \times \mathbb{T}^m)$, thus by 
	density also in $C_0(\mathbb{R}^d \times \mathbb{T}^m)$. \\
	\textbf{(i-d)} 
	\begin{equation*}
	\parallel T^{\epsilon,\delta}(t) f \parallel_\infty \leq \parallel f \parallel_\infty.
	\end{equation*}
	This is easy to see. All in all, $\mathcal{L}^{\epsilon,\delta}$ generates a strongly continuous 
	contraction semigroup on $X$. \\
	\textbf{(ii)} Analogously we can show that 
	$\mathcal{L}_1^\delta$ generates a strongly continuous contraction semigroup $S^\delta(t)_{t \geq 0}$ on $X$. 
	For the ergodicity it suffices to show (see also \cite[Remark 7.5]{30})\\
	\textbf{(ii-e)} 
	\begin{equation}
	S^\delta(t) f \to P^\delta f  \text{\quad in $X$ as } t \to \infty,
	\end{equation}
	where $P^\delta$ is the projection given by \eqref{eq: Projection Pd}
	Let $\tilde{\psi}_t(x,y)$ denote the flow of the SDE corresponding to $\mathcal{L}_1^\delta$. 
	Observe that due to the structure of the generator, the flow has the form 
	\begin{equation*}
	\tilde{\psi}_t(x,y) = (x, \phi_x^{\delta,t}(y)),
	\end{equation*}
	where $\phi_x^{\delta,t}(y)$ solves (\ref{eq: phi_xi}).
	Due to \cite[Theorem 6.16]{PavliotisStuart} we have
	\begin{equation*}
	\parallel S^\delta(t) f(x,y) - P^\delta f(x,y) \parallel_\infty \leq C \parallel 
	f \parallel_\infty \txte^{-\lambda t} \to 0 \text{\quad as } t\to \infty,
	\end{equation*}
	since the constant $C$ can be chosen to be independent of $x,y$ (due to the uniform bounds 
	on the coefficients of the SDE). This proves the ergodicity of the semigroup $S^\delta(t)$ on $X$.
\end{proof}

\begin{thm}\cite[Chapter 12, Theorem 2.4]{30} \label{thm: application of kurtz} \\
Fix a $\delta>0$ and let $\mathcal{L}^{\epsilon,\delta}$ 
	be the the operators as in (\ref{eq: bachward kolm eq with eps}). Define $\mathcal{P}^\delta$ 
	by (\ref{eq: Projection Pd}) and assume that the centering 
	condition (\ref{eq: centering condition in theorem}) is satisfied for all $x \in \mathbb{R}^d$. 	
Furthermore let  $\Phi^\delta$ be  the solution of the cell problem (\ref{eq: cell problem}).
	Define 
	\begin{equation*}
	D:= C^2_\txtc(\mathbb{R}^d) \subset X.
	\end{equation*}
	For every $f \in D$ let $h\in X$  denote the unique solution of the Poisson equation
	\begin{equation} 
	\label{cor: from Kurtz, poisson eq 2}
	\mathcal{L}_1^\delta h = - \mathcal{L}_2 \mathcal{P}^\delta f, \text{\quad} 
	\int_{\mathbb{T}^m} h(x,y) \rho_\infty^\delta(y;x) ~\rmd y = 0,
	\end{equation}
	whose existence and uniqueness is guaranteed due to the centering condition and the Fredholm alternative 
	and let $\mathcal{L}^{0,\delta}$ be the operator defined on $D$ by 
   (\ref{eq: apr eq for barrho fp}).
	Assume that the closure $\bar{\mathcal{L}^{0,\delta}}$ generates a strongly continuous contraction 
	semigroup $\{T(t)^{0,\delta}\}_{t\geq 0}$ on $C_0(\mathbb{R}^d)$. 
	Then we have for every $f \in \bar{D}$ and finite times $\hat{T} < \infty$ 
	\begin{equation}
	\label{eq: cor from Kurtz, eq}
	\lim_{\epsilon \to 0} \sup_{0 \leq t \leq \hat{T}} \parallel T^{\epsilon,\delta}(t)
	f - T(t)^{0,\delta}f \parallel_\infty = 0.
	\end{equation}		
\end{thm}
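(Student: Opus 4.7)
The plan is to realize this statement as an application of Kurtz's semigroup convergence theorem (cited as [30, Chapter 12, Theorem 2.4]) by verifying its hypotheses in our concrete setting. First I would lay out the functional-analytic framework: by Lemma \ref{lem: suitable banach space L}, for each fixed $\delta>0$ the operator $\mathcal{L}^{\epsilon,\delta}$ generates a strongly continuous contraction semigroup $\{T^{\epsilon,\delta}(t)\}_{t\geq 0}$ on $X = C_0(\mathbb{R}^d\times\mathbb{T}^m)$, while $\mathcal{L}_1^\delta$ generates an ergodic strongly continuous contraction semigroup on $X$ with associated projection $\mathcal{P}^\delta$ given by (\ref{eq: Projection Pd}). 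This is exactly the abstract input Kurtz's theorem requires: a singularly scaled family $\mathcal{L}^{\epsilon,\delta}= \epsilon^{-2}\mathcal{L}_1^\delta+\epsilon^{-1}\mathcal{L}_2+\mathcal{L}_3$ with an ergodic fast part.

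Next I would implement the formal perturbation expansion (\ref{eq. perturbation expansion for backward kolmogorov}), which produces the candidate limit generator $\mathcal{L}^{0,\delta}$. Matching powers of $\epsilon$: the $\epsilon^{-2}$ equation $\mathcal{L}_1^\delta u_0^\delta=0$ forces $u_0^\delta\in\mathrm{range}(\mathcal{P}^\delta)$, i.e.\ independent of $y$; the $\epsilon^{-1}$ equation reads $\mathcal{L}_1^\delta u_1^\delta=-\mathcal{L}_2 u_0^\delta$, which is solvable precisely because the centering condition (\ref{eq: centering condition in theorem}) places $\mathcal{L}_2\mathcal{P}^\delta f$ in the range of $\mathcal{L}_1^\delta$; Fredholm alternative produces the unique centered $h$ satisfying (\ref{cor: from Kurtz, poisson eq 2}). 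Imposing the solvability condition at order $\epsilon^0$ then gives $\partial_t u_0^\delta = \mathcal{L}^{0,\delta}u_0^\delta$ with $\mathcal{L}^{0,\delta}$ as in (\ref{eq: apr eq for barrho fp}), and the assumption that $\bar{\mathcal{L}^{0,\delta}}$ generates a strongly continuous contraction semigroup $\{T^{0,\delta}(t)\}_{t\geq 0}$ on $C_0(\mathbb{R}^d)$ is in place.

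Once the hypotheses are verified, Kurtz's theorem yields the required resolvent convergence and hence the semigroup convergence (\ref{eq: cor from Kurtz, eq}) on compact time intervals $[0,\hat T]$, uniformly in $t$, for every $f\in \bar{D}$. Concretely, for $f\in D=C^2_\txtc(\mathbb{R}^d)$ one constructs the approximating element $f^\epsilon := f + \epsilon h$, for which a direct computation gives $\mathcal{L}^{\epsilon,\delta}f^\epsilon = \mathcal{L}^{0,\delta}f + \mathcal{O}(\epsilon)$ uniformly on $X$, so the generators converge on a core and Trotter–Kato (or its Kurtz refinement for multiscale problems) transfers this to semigroup convergence; density of $D$ in $C_0(\mathbb{R}^d)$ together with the uniform contraction bound $\|T^{\epsilon,\delta}(t)\|\leq 1$ extends the convergence to all $f\in\bar{D}$.

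The main obstacle I anticipate is precisely the verification step that the corrector $h$ from the Poisson equation lies in $X$ with the right regularity, and that the residual $\mathcal{L}^{\epsilon,\delta}f^\epsilon - \mathcal{L}^{0,\delta}f$ is indeed of order $\epsilon$ in the supremum norm uniformly in $(x,y)$; this in turn relies on uniform ellipticity of $\mathcal{L}_1^\delta$ (hence smoothness and boundedness of $\Phi^\delta$ through the cell problem (\ref{eq: cell problem})) together with the global boundedness assumptions (A1)–(A2) on $a,b,g$. Once these uniform-in-parameter bounds are secured, applying Kurtz's theorem verbatim closes the argument.
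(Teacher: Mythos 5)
Your overall route is the same as the paper's: both arguments reduce the statement to the abstract Ethier--Kurtz singular-perturbation theorem by (i) invoking Lemma~\ref{lem: suitable banach space L} for generation of the contraction semigroup $T^{\epsilon,\delta}$ and of the ergodic semigroup of $\mathcal{L}_1^\delta$ with projection $\mathcal{P}^\delta$, and (ii) exhibiting the corrector $h$ from the cell problem; the paper makes this concrete by writing $h = V(f) = \Phi^\delta(y;x)\cdot\nabla_x f(x)$, checking the relevant domain inclusions, and then citing \cite[Chapter 1, Corollary 7.8]{30} with $A:=\mathcal{L}_2$, $B:=\mathcal{L}_1^\delta$, $\Pi:=\mathcal{L}_3$. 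Your anticipated ``main obstacle'' (regularity and uniform boundedness of $\Phi^\delta$ via uniform ellipticity) is exactly the point the paper addresses when it notes that $\Phi^\delta$ is smooth in both arguments.

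There is, however, one concrete step in your sketch that fails as stated: the claim that for $f^\epsilon := f + \epsilon h$ one has $\mathcal{L}^{\epsilon,\delta}f^\epsilon = \mathcal{L}^{0,\delta}f + \mathcal{O}(\epsilon)$ uniformly. Computing directly, since $\mathcal{L}_1^\delta f = 0$ and $\mathcal{L}_1^\delta h = -\mathcal{L}_2 f$, the singular orders cancel and one is left with
\begin{equation*}
\mathcal{L}^{\epsilon,\delta}f^\epsilon \;=\; \mathcal{L}_2 h + \mathcal{L}_3 f + \epsilon\,\mathcal{L}_3 h ,
\end{equation*}
and the $\mathcal{O}(1)$ term $\mathcal{L}_2 h + \mathcal{L}_3 f$ still depends on $y$; it equals $\mathcal{L}^{0,\delta}f$ only after applying $\mathcal{P}^\delta$, so the residual $(\mathrm{Id}-\mathcal{P}^\delta)(\mathcal{L}_2 h + \mathcal{L}_3 f)$ is of order one in the supremum norm, not of order $\epsilon$. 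Consequently a plain Trotter--Kato argument based on generator convergence on a core does not close. The gap is repairable in two standard ways: either add a second corrector $\epsilon^2 h_2$ with $\mathcal{L}_1^\delta h_2 = \mathcal{L}^{0,\delta}f - \mathcal{L}_2 h - \mathcal{L}_3 f$ (solvable by the Fredholm alternative because the right-hand side is centered with respect to $\rho_\infty^\delta$), which does produce an $\mathcal{O}(\epsilon)$ residual; or, as the paper does, invoke the Ethier--Kurtz corollary directly, whose hypotheses require only the first corrector because the ergodicity of the semigroup generated by $\mathcal{L}_1^\delta$ is used internally to average away the fluctuating part. You should make one of these two choices explicit rather than asserting the $\mathcal{O}(\epsilon)$ identity.
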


\begin{proof}
	The proof is taken from \cite[Chapter 12, Theorem 2.4]{30} but is included for convenience.
	From Lemma \ref{lem: suitable banach space L} follows that $\bar{\mathcal{L}_1^\delta}$ generates 
	the ergodic strongly continuous contraction semigroup $\{S(t)^\delta\}_{t\geq0}$ on $X$ and 
	$\bar{\mathcal{L}^\epsilon}$ generates the strongly continuous contraction semigroup 
	$\{T^{\epsilon,\delta}(t)\}_{t\geq 0}$ on $X$.
	We define 
	\begin{gather*}
	\mathcal{D}(\mathcal{L}_1^\delta) := \{ f \in L:\forall x: f(x;\cdot) \in C^2(\mathbb{T}^m) \} \\
	\mathcal{D}(\mathcal{L}_2) := \{ f \in L:\forall y: f(\cdot;y) \in C^1_c(\mathbb{R}^d) \} \\
	\mathcal{D}(\mathcal{L}_3) := \{ f \in L: \forall y: f(\cdot;y) \in C^1_c(\mathbb{R}^d) \}.
	\end{gather*}
	We observe that 
	\begin{equation*}
	D \subset \mathcal{D}(\mathcal{L}_1^\delta) \cap \mathcal{D}(\mathcal{L}_2) \cap \mathcal{D}(\mathcal{L}_3).
	\end{equation*}	
	Define further
	\begin{equation*}
	\mathcal{D}(V):= \{\mathcal{L}_2f: f=f(x) \in C_\txtc^2(\mathbb{R}^d)\} 
	\end{equation*}
	and
	\begin{equation*}
	\mathcal{R}(V):=  \{ f \in C^{2,0}_c(\mathbb{R}^d\times \mathbb{T}^m): f(x,\cdot) 
	\in C^2(\mathbb{T}^m), \mathcal{L}_1^\delta f \in L \}
	\end{equation*} 
	and the operator $V: \mathcal{D}(V) \to \mathcal{R}(V)$, acting via 
	\begin{equation*}
	V(f):= \Phi^\delta(y;x) \cdot \nabla_x f(x).
	\end{equation*}
	Note that since $b$ and the coefficients of $\mathcal{L}_1$ are smooth and $\mathcal{L}_1$ 
	is uniformly elliptic, $\Phi$ is smooth in both arguments (See also  \cite[Lemma 17.2]{PavliotisStuart} 
	for a similar situation). Having this in mind, it is easy to check that $R(V) \subset
	\mathcal{D}(\mathcal{L}_1^\delta) \cap \mathcal{D}(\mathcal{L}_2) \cap \mathcal{D}(\mathcal{L}_3)$ and 
	recalling the definitions of $\Phi^\delta$ and $\mathcal{L}_1^\delta$ we also see that $h= V(f)$ solves the 
	Poisson equation
	\begin{equation*}
	\mathcal{L}_1^\delta V(f) = - \mathcal{L}_2 f = - \mathcal{L}_2 \mathcal{P}f, \text{\quad } 
	\int_{\mathbb{T}^m} h(x,y) \rho_\infty^\delta(y;x) ~\rmd y = 0.
	\end{equation*}
	Hence, 
	\begin{equation*}
	D \subset \{f \in \mathcal{D}(\mathcal{L}_1^\delta) \cap\mathcal{D}(\mathcal{L}_2) \cap 
	\mathcal{D}(\mathcal{L}_3): \exists h \in 	\mathcal{D}(\mathcal{L}_1^\delta) 
	\cap\mathcal{D}(\mathcal{L}_1^\delta) \cap \mathcal{D}(\mathcal{L}_1^\delta): \mathcal{L}_1^\delta h 
	= -\mathcal{L}_2f\}.
	\end{equation*}
	The claim follows now from 	\cite[Chapter 1, Corollary 7.8]{30}, setting $A:= \mathcal{L}_2$, 
	$\Pi:= \mathcal{L}_3$ and  $B:= \mathcal{L}_1^\delta$.
\end{proof}

\section{Perturbation analysis for weakly-coupled systems}
\label{sec: perturbation anal for wcsys app}

In the following we follow \cite{PavliotisStuart} and \cite{35}. We provide the perturbation
expansions here for completeness as they are the most convenient tool to formally derive the
correct limiting behavior. Substituting (\ref{eq: perturbation ansatz for wc}) into the backward 
Kolmogorov equation (\ref{eq: bachward kolm eq with eps wc}) and collecting terms of the same 
powers we obtain a sequence of problems:
\begin{align}
\mathcal{O}(\frac{1}{\epsilon^2}): & \text{\quad} \tilde{\mathcal{L}}_1^\delta u_0^\delta 
= 0 \label{eq: per 1 bk wc} \\
\mathcal{O}(\frac{1}{\epsilon}): & \text{\quad} \tilde{\mathcal{L}}_1^\delta u_1^\delta 
= - \mathcal{L}_2^c u_0^\delta \label{eq: per 2 bk wc}\\
\mathcal{O}(1): & \text{\quad} \tilde{\mathcal{L}}_1^\delta u_2^\delta 
= \frac{\rmd u_0^\delta}{\rmd t} - \tilde{\mathcal{L}}_2 u_1^\delta - 
\tilde{\mathcal{L}}_3 u_0^\delta, \label{eq: per 3 bk wc}\\
... \nonumber
\end{align}
From equation (\ref{eq: per 1 bk wc}) it follows, due to the ergodicity property (\ref{eq: erg ass 1}) 
for $\tilde{\mathcal{L}}_1^\delta$, that the solution $u_0^\delta$  does not depend on $y$, in other words 
it is of the form
\begin{equation*}
u_0^\delta(x,y,t) = u_0^\delta(x,t).
\end{equation*}
To solve the second equation 
note that the centering condition (\ref{eq: centering condition with delta}) implies that $\mathcal{L}_2^c 
u_0^\delta$ is orthogonal to the null space of  $\Big(\tilde{\mathcal{L}}_1^\delta\Big)^*$.
Thus, by the Fredholm alternative equation (\ref{eq: per 2 bk wc}) is solvable and the solution is unique 
up to a constant lying in the null space of $\tilde{\mathcal{L}}_1^\delta$. We fix this constant by requiring 
\begin{equation}\label{eq: center cond for u_1}
\int_{\mathbb{T}^m} u_1^\delta(x,y) \rho_\infty^\delta(y;x)~\rmd y = 0 \text{\quad for all } x \in \mathbb{R}^d.
\end{equation}
Thus we can write formally
\begin{equation}
u_1^\delta = - \Big(\tilde{\mathcal{L}}_1^\delta\Big)^{-1}\mathcal{L}_2^c  u_0^\delta(x,t).
\end{equation}
We continue with the last equation (\ref{eq: per 3 bk wc}). Solvability requires that the right side is orthogonal 
to the null space of $\mathcal{L}_1$ and this leads the following equation for $u_0^\delta(x,t)$:
\begin{align}\label{eq: apr eq for barrho fp wc}
\frac{\rmd u_0^\delta}{\rmd t}(x,t) &=  \int_{\mathbb{T}^m}\rho_\infty^\delta(y;x)
\tilde{\mathcal{L}}_3 u_0^\delta(x,t) ~\rmd y - \int_{\mathbb{T}^m}\rho_\infty^\delta(y;x)
\tilde{\mathcal{L}}_2\Big(\tilde{\mathcal{L}}_1^\delta\Big)^{-1}\tilde{\mathcal{L}}_2 u_0^\delta(x,t)~\rmd y \nonumber\\
&= \Big(\mathcal{P}^\delta\tilde{\mathcal{L}}_3 \mathcal{P}^\delta -
\mathcal{P}^\delta\tilde{\mathcal{L}}_2\Big(\tilde{\mathcal{L}}_1^\delta\Big)^{-1}\tilde{\mathcal{L}}_2 
\mathcal{P}^\delta\Big)u_0^\delta(x,t). 
\end{align}
In this way we obtained a closed equation for the dominant term $u_0^\delta$ but we still have to evaluate 
the operators involved in it. Recall that $\Phi^\delta$ denotes the solution of the cell 
problem (\ref{eq: cell problem}). Thus, coming back to equation (\ref{eq: per 2 bk wc}), we observe that $u_1$ 
must have due to (\ref{eq: center cond for u_1}) the form 
\begin{equation}
u_1^\delta(x,y,t) = \Phi^\delta(y;x) \cdot \nabla_x u_0^\delta(x,t).
\end{equation}
Hence,
\begin{equation*}
\tilde{\mathcal{L}}_2 u_1^\delta = \underbrace{b \otimes \Phi : \nabla_x 
\nabla_x u_0^\delta + (\nabla_x \Phi b) \cdot \nabla_x u_0^\delta }_{= \mathcal{L}_2^c u_1} +
   \underbrace{(\nabla_y \Phi h )\cdot\nabla_x u_0^\delta}_{= \mathcal{L}_2^{nc} u_1^\delta},
\end{equation*}
Equation (\ref{eq: apr eq for barrho fp wc}) can be now re-written as
\begin{align}
\frac{\rmd u_0^\delta}{\rmd t} &= \mathcal{P}^\delta\tilde{\mathcal{L}}_3 u_0^\delta
-\mathcal{P}^\delta\tilde{\mathcal{L}}_2\underbrace{\tilde{\mathcal{L}}_1^{-1}\tilde{\mathcal{L}}_2  
u_0^\delta}_{=- u_1^\delta} \nonumber\\
&= I_1 + I_2,
\end{align}
with
\begin{align*}
I_1 &= \int_{\mathbb{T}^m} a(x,y) \rho_\infty^\delta(y;x) ~\rmd y \cdot \nabla_x u_0^\delta(x,t) 
\end{align*}
and 
\begin{align*}
I_2 = \int_{\mathbb{T}^m} \rho_\infty^\delta(y;x) &\Big(b(x,y) \otimes \Phi^\delta(y;x): \nabla_x \nabla_x u_0^\delta(x,t) \Big) ~\rmd y \\
&+ \int_{\mathbb{T}^m} \rho_\infty^\delta(y;x)\Big(\nabla_x \Phi^\delta(y;x)b(x,y) \Big) \cdot \nabla_x u_0^\delta(x,t)~\rmd y \\
&+ \int_{\mathbb{T}^m} \rho_\infty^\delta(y;x)\Big(\nabla_x \Phi^\delta(y;x)h(x,y) \Big) \cdot \nabla_x u_0^\delta(x,t)~\rmd y.
\end{align*}
Putting everything together we get (\ref{eq: eq for uo wc}).
\end{document}